\definecolor{green1}{rgb}{0,0.392157,0}
\definecolor{blue1}{cmyk}{0.4,0.4,0,0.4}
\newtheorem{theorem}{Theorem}[section]
\newtheorem{corollary}[theorem]{Corollary}
\newtheorem{definition}[theorem]{Definition}
\newtheorem{lemma}[theorem]{Lemma}
\newtheorem{notation}[theorem]{Notation}
\newtheorem{proposition}[theorem]{Proposition}
\newtheorem{example}[theorem]{Example}
\newtheorem{remark}[theorem]{Remark}
\makeatletter \@addtoreset{equation}{section} \makeatother
\renewcommand{\d}{\mathrm{d}}
\newcommand{\lip}{\mathrm{Lip}}
\newcommand{\HK}{\mathsf{H\kern-3pt K}}
\begin{document}
\title{Transportation inequalities for Markov kernels and their applications}
\author{Fabrice Baudoin\footnote{Department of Mathematics, University of Connecticut, Storrs, CT
06269, USA.  Email: \texttt{fabrice.baudoin@uconn.edu}.  Research was
supported in part by National Science Foundation grant DMS-1901315.} \and
  Nathaniel Eldredge\footnote{School of Mathematical Sciences,
    University of Northern Colorado, Greeley, CO 80369, USA.  Email:
    \texttt{neldredge@unco.edu}.  Research was supported in part by
    Simons Foundation grant \#355659.}}
\date{March 22, 2021}

\maketitle

\begin{abstract}
  We study the relationship between functional inequalities for a
  Markov kernel on a metric space $X$ and inequalities of
  transportation distances on the space of probability measures
  $\mathcal{P}(X)$.  Extending results of Luise and Savar\'e on
  Hellinger--Kantorovich contraction inequalities for the particular
  case of the heat semigroup on an $RCD(K,\infty)$ metric space, we
  show that more generally, such contraction inequalities are
  equivalent to reverse Poincar\'e inequalities.  We also adapt the
  ``dynamic dual'' formulation of the Hellinger--Kantorovich distance
  to define a new family of divergences on $\mathcal{P}(X)$ which
  generalize the R\'enyi divergence, and we show that contraction
  inequalities for these divergences are equivalent to the reverse
  logarithmic Sobolev and Wang Harnack inequalities.  We discuss applications
  including results on the convergence of Markov processes to
  equilibrium, and on quasi-invariance of heat kernel measures in
  finite and infinite-dimensional groups.
\end{abstract}

\tableofcontents

\section{Introduction}

The goal of this paper is to build upon recent results of G.~Luise and
G.~Savar\'e \cite{luise-savare} on contraction properties of the flow
of a heat semigroup in spaces of measures.  There, the authors study a
``dynamic dual'' formulation of various distances between probability
measures on a metric measure space, including the Kantorovich--Wasserstein and
Hellinger distances as well as a family of Hellinger--Kantorovich
distances $\HK_\alpha$ introduced in \cite{LMS2018}.  They focus on
the setting of $\mathrm{RCD}(K, \infty)$ spaces, in which the
canonical heat semigroup $P_t$ generated by the Cheeger energy
satisfies a Bakry--\'Emery curvature condition; these spaces are, very
roughly speaking, more general analogues of Riemannian manifolds with
Ricci curvature bounded from below.  Under this assumption, they
obtain contraction inequalities of the form
\begin{equation}\label{LS-contract-intro}
  \mathsf{He}_2(\mu_0 P_t, \mu_1 P_t) \le \HK_{\alpha(t)}(\mu_0, \mu_1) 
\end{equation}
where $\mu P_t$ denotes the dual action of the heat semigroup $P_t$ on
the probability measure $\mu$, $\mathsf{He}$ and $\HK$ are the Hellinger and
Hellinger--Kantorovich distances respectively, and $\alpha(t)$ depends
on $K$.  The proof is based on the fact that $\mathrm{RCD}(K, \infty)$
spaces satisfy a reverse Poincar\'e inequality of the form 
\begin{equation}\label{rev-poincare-RCD-intro}
  |\nabla P_t f|^2 \le \frac{K}{e^{2Kt}-1} (P_t(f^2) - (P_t f)^2).
\end{equation}
Indeed, the inequality \eqref{rev-poincare-RCD-intro}, with its specific
form of the time-dependent constant $\frac{K}{e^{2Kt}-1}$, is one of
many functional inequalities that are equivalent to the Bakry--\'Emery
curvature condition; see for instance \cite[Proposition
  3.3]{bakry-tata}.

The first goal of the present paper is to further study the
relationship between reverse Poincar\'e inequalities and
Hellinger--Kantorovich contraction inequalities.  Our first main
result is Theorem \ref{equi-func}, in which we show that the
implication between the two holds in a much more general setting than
$\mathrm{RCD}(K, \infty)$ spaces.  We suppose only that we have a
Markov operator $P$ acting on a sufficiently nice metric space $X$,
and we show that if $P$ satisfies a reverse Poincar\'e inequality of
the form
\begin{equation}\label{rev-poincare-intro}
  |\nabla Pf|^2 \le C(P(f^2) - (Pf)^2), \qquad f \in \lip_b(X)
\end{equation}
then we obtain a Hellinger--Kantorovich contraction of the form
\begin{equation}\label{HK-contract-intro}
  \mathsf{He}_2(\mu_0 P, \mu_1 P) \le \HK_{4/C}(\mu_0, \mu_1)
\end{equation}
for all probability measures $\mu_0, \mu_1$ on $X$.  In particular,
\eqref{HK-contract-intro} holds in non-RCD models where there is a
semigroup $P_t$ which satisfies \eqref{rev-poincare-intro} for each
$t$, but with a time-dependent constant $C(t)$ that is not of the form
appearing in \eqref{rev-poincare-RCD-intro}.  We discuss several
examples and applications in Section \ref{equilibrium-sec}, including
subelliptic diffusions arising in sub-Riemannian geometry,
non-symmetric Ornstein--Uhlenbeck operators on Carnot groups, Langevin
dynamics driven by L\'evy processes, and others.

Furthermore, in this general setting, we are able to show (also in
Theorem \ref{equi-func}) that the converse implication holds as well,
so that \eqref{rev-poincare-intro} and \eqref{HK-contract-intro} are
in fact equivalent.  They are also equivalent to a parabolic Harnack
inequality.  Thus the Hellinger--Kantorovich contraction can be seen
as a new aspect of a well-known family of functional inequalities,
providing additional tools and motivations for their study.

The key tool in all of this is the dynamic dual formulation of the
Hellinger--Kantorovich distance (Definition \ref{W-def}), originally
introduced in \cite{LMS2018}, which expresses $\HK_a(\mu_0, \mu_1)$ as
the supremum of $\int \varphi_1\,\d\mu_1 - \int \varphi_0\,\d\mu_0$
over a family of time-dependent functions $\varphi_s : [0,1] \times X
\to \mathbb{R}$ satisfying a certain Hamilton--Jacobi partial
differential inequality in time and space.  This formula extends the
so-called Kantorovich duality for the Kantorovich--Wasserstein
distance, and also includes an expression for the Hellinger distance.
Having the distance defined in terms of solutions of a partial
differential inequality makes it particularly convenient to relate it
to functional inequalities where the gradient appears, as we
demonstrate in Theorems \ref{poincare-func-ineq} and \ref{equi-func}.
Indeed, this technique also provides an extension of the Kuwada
duality theorem \cite{kuwada-duality, LMS2018}, relating $L^2$
gradient estimates to a Kantorovich--Wasserstein contraction
inequality; see Corollary \ref{gradient-wasserstein-contraction}.

Pursuing this idea in a different direction, in Section
\ref{renyi-sec} we use a dynamic dual approach to formulate a new
family of transportation-cost divergences $T_{a,b}$ on the space of
probability measures, which are of ``entropic'' type and include the
R\'enyi divergence.  In place of reverse Poincar\'e inequalities, this
family is designed to connect with reverse logarithmic Sobolev
inequalities of the form
\begin{equation}\label{rlsi-intro}
  Pf |\nabla \ln Pf|^2 \le C(P(f \ln f) - (Pf) \ln Pf).
\end{equation}
We note that an equality of the type \eqref{rlsi-intro} is stronger
than an inequality of the type \eqref{rev-poincare-intro}, by using
\eqref{rlsi-intro} with $1+\varepsilon f$.  After developing some
properties of the $T_{a,b}$ divergences, we show in Theorem
\ref{entropic-equivalence} that \eqref{rlsi-intro} is actually
equivalent to a family of entropic transportation-cost contraction
inequalities for $T_{a,b}$, of the form
\begin{equation}\label{T-contract-intro}
  T_{0, \kappa C}(\mu_0 P, \mu_1 P) \le T_{\kappa, \kappa C}(\mu_0,
  \mu_1), \qquad \kappa > 0.
\end{equation}
These two statements are moreover equivalent to a Wang-type parabolic
Harnack inequality, as well as to an integrated Harnack inequality
(see Remark \ref{integrated-harnack}).  Thus, the reverse log Sobolev
inequality \eqref{rlsi-intro} also has a ``transport'' aspect.  We
discuss in Section \ref{quasi-sec} how \eqref{T-contract-intro} can be
used, in finite or infinite dimensions, to prove certain
quasi-invariance results that were previously obtained via Wang
Harnack inequalities or other methods; see for instance \cite{bgm}.

For other applications, and a general overview of reverse Poincar\'e
and log-Sobolev inequalities and of the connections with Harnack type
inequalities, we refer to the book \cite{MR3099948}.

\section{General setup and notation}\label{setup-sec}

Throughout the paper, unless otherwise specified, $(X,d)$ denotes a
complete, proper, separable metric space which is a length space; in
particular, path connected.  We suppose that $X$ is equipped with a strong upper gradient
$|\nabla f|$ as defined in \cite[Definition 1.2.1]{AGS-Book-2008}.
More precisely, for a measurable function $f: X \to \mathbb{R}$ we
define
\begin{equation}
|\nabla f|(x) =\lim_{r \to 0} \sup_{0<d(x,y) \le r} \frac{ |
  f(x)-f(y)|}{d(x,y)}
\end{equation}
and denote by $\lip_b(X)$ the space of all bounded Lipschitz functions
on $X$. Then, we have the following result:

\begin{lemma}[Proposition 1.11, \cite{Cheeger}]
For every $f \in \lip_b(X)$, $|\nabla f|$ is a strong upper gradient
in the sense that for each rectifiable curve $\gamma: [0, L] \to X$
parametrized by arc-length we have
\[
| f( \gamma(L)) -f (\gamma(0))| \le \int_0^L | \nabla f | ( \gamma(s))
\d s.
\]
\end{lemma}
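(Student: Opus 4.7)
The plan is to reduce the inequality to a one-dimensional statement by composing with $\gamma$. The key observation is that since $\gamma$ is parametrized by arc length it is $1$-Lipschitz (arc length dominates chord length), so $g := f \circ \gamma$ is Lipschitz on $[0,L]$. One-dimensional Rademacher gives $g$ differentiable at almost every $s$, and the fundamental theorem of calculus for absolutely continuous functions then yields
\begin{equation*}
  f(\gamma(L)) - f(\gamma(0)) \;=\; \int_0^L g'(s)\,\d s.
\end{equation*}
Taking absolute values and applying the triangle inequality inside the integral, the statement reduces to the pointwise estimate $|g'(s)| \le |\nabla f|(\gamma(s))$ at almost every $s \in [0,L]$.

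To establish this pointwise bound at a point $s$ where $g'(s)$ exists, I would select a sequence $h_n \to 0$ realizing $|g'(s)|$ as the limit of the difference quotient $|g(s+h_n)-g(s)|/|h_n|$, and factor
\begin{equation*}
  \frac{|g(s+h_n) - g(s)|}{|h_n|}
  \;=\;
  \frac{|f(\gamma(s+h_n)) - f(\gamma(s))|}{d(\gamma(s+h_n),\gamma(s))}
  \cdot
  \frac{d(\gamma(s+h_n),\gamma(s))}{|h_n|}.
\end{equation*}
The second factor is at most $1$ by the $1$-Lipschitz property of $\gamma$, and since $\gamma(s+h_n) \to \gamma(s)$, the limsup of the first factor is bounded by $|\nabla f|(\gamma(s))$ directly from the definition of $|\nabla f|$ as the local upper Lipschitz constant. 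Combining the two bounds and passing to the limit gives the pointwise estimate.

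The only real subtlety---and hence the main (minor) obstacle---is that the factorization above is only valid when $d(\gamma(s+h_n),\gamma(s)) > 0$, which can fail if the arc-length parametrized curve returns to $\gamma(s)$ along a sequence of parameter values accumulating at $s$. This case is handled by observing that when $\gamma(s+h_n) = \gamma(s)$, the numerator $f(\gamma(s+h_n)) - f(\gamma(s))$ also vanishes; so along such a subsequence the difference quotient of $g$ is identically zero, forcing $g'(s) = 0$ and making the desired pointwise inequality trivial. With this case split in hand, the proof is a direct marriage of the fundamental theorem of calculus for Lipschitz functions on the line with the definition of $|\nabla f|$.
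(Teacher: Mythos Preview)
The paper does not give its own proof of this lemma; it is simply stated with a citation to Cheeger (Proposition~1.11). Your argument is correct and is essentially the standard proof: reduce to a one-variable Lipschitz function via the arc-length parametrization, apply Rademacher and the fundamental theorem of calculus, and bound $|g'(s)|$ pointwise using the definition of $|\nabla f|$ together with the $1$-Lipschitz property of $\gamma$. Your handling of the degenerate case $\gamma(s+h_n)=\gamma(s)$ is also correct: since $g'(s)$ is assumed to exist, the vanishing of the difference quotient along any subsequence forces $g'(s)=0$.
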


One may also verify that $|\nabla f|$ satisfies the chain rule:

\begin{lemma}\label{chain-rule}
  If $f : X \to \mathbb{R}$ is Lipschitz in a neighborhood of $x$ and $\phi :
  \mathbb{R} \to \mathbb{R}$ is differentiable at $f(x)$, then
  $|\nabla (\phi \circ f)|(x) = |\phi'(f(x))|\, |\nabla f|(x)$.
\end{lemma}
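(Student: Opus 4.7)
The plan is to unfold the definition of the slope and combine it with the first-order Taylor expansion of $\phi$ at $f(x)$. Write the differentiability of $\phi$ as
\[
\phi(t) - \phi(f(x)) = \phi'(f(x))\,(t - f(x)) + \varepsilon(t - f(x))\,(t - f(x)),
\]
where $\varepsilon(h) \to 0$ as $h \to 0$. Setting $t = f(y)$ and using that $f$ is Lipschitz in a neighborhood $U$ of $x$ (with constant $L$, say), we have $|f(y) - f(x)| \le L\,d(x,y)$ for $y \in U$, which in particular forces $f(y) \to f(x)$ as $y \to x$, so $\varepsilon(f(y) - f(x)) \to 0$. This expansion is the engine of both inequalities.

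For the upper bound, apply the triangle inequality to the expansion to obtain
\[
\frac{|\phi(f(y)) - \phi(f(x))|}{d(x,y)} \le \bigl(|\phi'(f(x))| + |\varepsilon(f(y) - f(x))|\bigr)\,\frac{|f(y) - f(x)|}{d(x,y)}
\]
for $y \in U$ with $y \ne x$. Given $\eta > 0$, choose $r_0 > 0$ so small that $|\varepsilon(f(y) - f(x))| < \eta$ for all $y$ with $0 < d(x,y) \le r_0$; then for $r \le r_0$, taking the supremum over $0 < d(x,y) \le r$ and passing to $r \to 0$ yields $|\nabla(\phi \circ f)|(x) \le (|\phi'(f(x))| + \eta)\,|\nabla f|(x)$. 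Since $\eta$ is arbitrary and $|\nabla f|(x) \le L < \infty$, this gives $|\nabla(\phi \circ f)|(x) \le |\phi'(f(x))|\,|\nabla f|(x)$.

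For the lower bound, choose a sequence $y_n \to x$ with $y_n \ne x$ and
\[
\frac{|f(y_n) - f(x)|}{d(x, y_n)} \longrightarrow |\nabla f|(x).
\]
The same expansion, now used with the reverse triangle inequality, gives
\[
\frac{|\phi(f(y_n)) - \phi(f(x))|}{d(x, y_n)} \ge \bigl(|\phi'(f(x))| - |\varepsilon(f(y_n) - f(x))|\bigr)\,\frac{|f(y_n) - f(x)|}{d(x, y_n)},
\]
and the right-hand side tends to $|\phi'(f(x))|\,|\nabla f|(x)$. For any fixed $r > 0$, eventually $d(x, y_n) \le r$, so $\sup_{0 < d(x,y) \le r} \frac{|\phi(f(y)) - \phi(f(x))|}{d(x,y)}$ dominates $\frac{|\phi(f(y_n)) - \phi(f(x))|}{d(x, y_n)}$; letting $n \to \infty$ and then $r \to 0$ yields $|\nabla(\phi \circ f)|(x) \ge |\phi'(f(x))|\,|\nabla f|(x)$. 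This works uniformly, including the trivial case $\phi'(f(x)) = 0$, and combined with the upper bound completes the proof.

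The only real bookkeeping obstacle is the uniform control of the error $\varepsilon(f(y) - f(x))$ as $y$ varies near $x$, and this is exactly what the Lipschitz hypothesis on $f$ supplies by forcing $f(y) - f(x)$ into a small neighborhood of $0$ whenever $d(x,y)$ is small.
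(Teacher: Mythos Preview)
Your proof is correct. The paper does not actually prove this lemma; it merely states it after the remark ``One may also verify that $|\nabla f|$ satisfies the chain rule,'' leaving the verification to the reader. Your argument via the first-order Taylor expansion of $\phi$ at $f(x)$, with the Lipschitz hypothesis on $f$ ensuring uniform control of the remainder $\varepsilon(f(y)-f(x))$ as $y\to x$, is exactly the standard way to carry out that verification, and both inequalities are handled cleanly.
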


Let $\mathcal{B}_X$ denote
the Borel $\sigma$-algebra of $(X,d)$, and $\mathcal{P}(X)$ the set of
Borel probability measures on $X$.  We suppose we are given a
Markov probability kernel $P : X \times \mathcal{B}_X \to [0,1]$, and
we denote by $Pf$, $\mu P$ the usual action of $P$ on bounded Borel
functions $f$ and Borel probability measures $\mu$, i.e.
\begin{equation*}
  Pf(x) := \int_X f(y) P(x,\d y), \quad \mu P(A) := \int_X P(x,A)
  \mu(\d x).
\end{equation*}

In some applications, $P$ will be taken to be a Markov semigroup
$P_t$, which may or may not be symmetric with respect to some
reference measure.  Our setting is similar to
\cite{kuwada-duality}. This is more general than the setting of
\cite{luise-savare}, which only considered the symmetric semigroup
$P_t$ generated by the Cheeger energy with respect to the given
gradient and a given reference measure.

Given $\mu_0, \mu_1 \in \mathcal{P}(X)$, the $2$-Kantorovich--Wasserstein distance
$W_2(\mu_0, \mu_1)$ is defined as usual by
\begin{equation}
  W_2(\mu_0, \mu_1)^2 := \inf_\pi \int d(x_0,
  x_1)^2\,\mu(\d x_0, \d x_1),
\end{equation} the infimum taken over all couplings
$\pi \in \mathcal{P}(X \times X)$ of $\mu_0, \mu_1$.  In particular,
for point masses $\mu_i = \delta_{x_i}$, we have $W_2(\delta_{x_0},
\delta_{x_1}) = d(x_0, x_1)$.  We let $\mathcal{P}_2(X) \subset
\mathcal{P}(X)$ denote the Wasserstein space of probability measures
$\mu$ having a finite second moment, i.e. for which $\int_X d(x, x_0)^2\,\mu(\d
x) < \infty$ for some (equivalently, all) $x_0 \in X$.

The
$2$-Hellinger distance is defined by 
\begin{equation}
  \mathsf{He}_2(\mu_0, \mu_1)^2 := \int_X \left(\sqrt{\frac{\d \mu_1}{\d m}} - \sqrt{\frac{\d
    \mu_0}{\d m}}\right)^2\,\d m
\end{equation}
where $m$ is any measure such that $\mu_0, \mu_1$ are both absolutely
continuous with respect to $m$; the definition is independent of $m$.  Convergence
in Hellinger distance is equivalent to convergence in total variation,
and we have $\mathsf{He}_2(\mu_0, \mu_1)^2 \le 2$ for all $\mu_0, \mu_1
\in \mathcal{P}(X)$, with equality iff $\mu_0, \mu_1$
are mutually singular.

The stated hypotheses on the space $X$ are meant to strike a balance
between generality and convenience; one may certainly be able to
weaken them in various ways.  We have preferred to keep the emphasis
on the techniques and their applications, rather than on stating the
most general abstract theorems.  In particular, in Section
\ref{quasi-sec} we already depart from this setting to consider
infinite-dimensional examples based on abstract Wiener space, where
$X$ is a separable Banach space (which is not proper), the test
functions are taken to be the cylinder functions instead of all
bounded Lipschitz functions, and the gradient $\nabla$ is derived from
the Malliavin gradient, whose norm is not an upper gradient with
respect to the norm distance on $X$.  This requires only trivial
modifications to the arguments in the earlier sections; we discuss the
details in Section \ref{quasi-sec}.

\section{Hellinger--Kantorovich distances and functional inequalities}\label{HK-sec}

\subsection{The dynamic dual formulation and basic properties}

In this section, we consider the family of Hellinger--Kantorovich
distances studied in \cite{LMS2016,LMS2018,luise-savare}.  We focus on
the so-called dynamic dual formulation of these distances, in which
they may be defined as the supremum of a difference of integrals over
a class of subsolutions of a Hamilton--Jacobi-type equation in time
and space variables.  This idea is directly descended from a dynamic
dual formulation of the Kantorovich--Wasserstein distance, introduced in
\cite{otto-villani-2000}.  Using this formulation of these distances,
we will see that Poincar\'e and reverse Poincar\'e type inequalities for
$P$ lead directly to contraction results with respect to these
distances (Theorem \ref{poincare-func-ineq}).

We study the Hellinger--Kantorovich distance via a slightly different
parametrization which is more convenient for our purposes.  As above,
let $\lip_b(X)$ denote the Banach space of all bounded Lipschitz
functions on $X$.  We remark for later use that for any finite measure
$\mu$ on $X$, we have $\lip_b(X)$ dense in $L^1(\mu)$, and in
particular that for any bounded Borel function $f$ there is a sequence
$f_n \in \lip_b(X)$ with $f_n \to f$ $\mu$-a.e. and boundedly.

\begin{definition}\label{W-def}
  Let $a,b \ge 0$.  We denote by $\mathcal{A}_{a,b}$ the class of all
  functions $\varphi = \varphi_s(x) \in C^1([0,1], \lip_b(X))$ satisfying the
  differential inequality
  \begin{equation}\label{W-diff-ineq}
    \partial_s \varphi_s + a |\nabla \varphi_s|^2 + b \varphi_s^2 \le 0.
  \end{equation}
  Then for probability measures $\mu_1, \mu_2 \in \mathcal{P}(X)$ we set
  \begin{equation}
    W_{a,b}(\mu_0, \mu_1) = \sup_{\varphi \in \mathcal{A}_{a,b}}
      \left[\int_X \varphi_1\,\d\mu_1 - \int_X \varphi_0\,\d\mu_0\right].
  \end{equation}
\end{definition}

To avoid confusion, we note that $W_{a,b}$ itself is not a distance on
(a subset of) $\mathcal{P}(X)$, but rather the square of a distance.

\begin{lemma}\label{W-lemma} The squared distances $W_{a,b}$ satisfy the
  following basic properties:
  \begin{enumerate}
  \item \label{W-monotone} If $a \le a'$ and $b \le b'$ then $W_{a', b'} \le W_{a,
    b}$.
  \item \label{W-scaling} For any $c > 0$, we have $W_{ca, cb} = c^{-1} W_{a,b}$.
  \item \label{W-HK} When $b > 0$, we have $W_{a,b} = b^{-1} \HK^2_{4a/b}$, where $\HK$ is
    the Hellinger--Kantorovich distance as defined in \cite[Definition
      2.11]{luise-savare}.
  \item \label{W-Wasserstein} $W_{1/2, 0} = \frac{1}{2} W_2^2$, where $W_2$ is the
    Kantorovich--Wasserstein $2$-distance.
  \item \label{W-He} $W_{0,1} = \mathsf{He}_2^2$, where $\mathsf{He}_2$ is the
    Hellinger 2-distance.
  \end{enumerate}
\end{lemma}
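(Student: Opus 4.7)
Parts (i) and (ii) are elementary. For (i), since $|\nabla\varphi_s|^2, \varphi_s^2 \ge 0$, enlarging $a$ or $b$ only tightens \eqref{W-diff-ineq}, so $\mathcal{A}_{a',b'} \subseteq \mathcal{A}_{a,b}$ when $a \le a'$ and $b \le b'$, and the supremum shrinks. For (ii), the map $\varphi \mapsto c^{-1}\varphi$ is a bijection between $\mathcal{A}_{a,b}$ and $\mathcal{A}_{ca,cb}$: direct substitution gives
\[
  \partial_s(c^{-1}\varphi_s) + ca\,|\nabla(c^{-1}\varphi_s)|^2 + cb\,(c^{-1}\varphi_s)^2 \;=\; c^{-1}\bigl(\partial_s\varphi_s + a|\nabla\varphi_s|^2 + b\varphi_s^2\bigr),
\]
and the dual integrand also scales by $c^{-1}$. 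Part (iii) then follows from (ii) (taking $c = 1/b$) by matching the inequality $\partial_s\varphi + (\alpha/4)|\nabla\varphi|^2 + \varphi^2 \le 0$ against the dynamic dual formulation of $\HK_\alpha^2$ in \cite[Definition 2.11]{luise-savare}.

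For (v), at $a = 0$ the inequality \eqref{W-diff-ineq} decouples pointwise in $x$ into the scalar ODE $u' \le -u^2$. For $\varphi_0(x) > -1$ this is solved (with equality) by $\varphi_s = \varphi_0/(1 + s\varphi_0)$, which lies in $C^1([0,1], \lip_b(X))$ provided $\varphi_0 \in \lip_b(X)$ is bounded away from $-1$. The supremum in $W_{0,1}$ thereby reduces to pointwise optimization of
\[
  \int_X \frac{\varphi_0}{1+\varphi_0}\,\d\mu_1 \;-\; \int_X \varphi_0\,\d\mu_0
\]
over such $\varphi_0$. Writing $\rho_i = \d\mu_i/\d m$ for a common dominating $m$, the pointwise first-order condition $1+\varphi_0 = \sqrt{\rho_1/\rho_0}$ gives the value $\int(\sqrt{\rho_1} - \sqrt{\rho_0})^2\,\d m = \mathsf{He}_2(\mu_0, \mu_1)^2$.

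For (iv), the inequality $\partial_s\varphi_s + \tfrac12|\nabla\varphi_s|^2 \le 0$ is a Hamilton--Jacobi subsolution condition. Integrating along a near-geodesic and using the strong-upper-gradient property yields $\varphi_1(x) - \varphi_0(y) \le \tfrac12 d(x,y)^2$ for any admissible $\varphi$, so that $(\varphi_0, \varphi_1)$ is a Kantorovich dual pair for the cost $\tfrac12 d^2$. Conversely, starting from any such pair one can exhibit an admissible curve via the Hopf--Lax semigroup $Q_s\varphi_0(x) = \inf_y\bigl[\varphi_0(y) + d(x,y)^2/(2s)\bigr]$, which on proper length spaces produces a Lipschitz-in-$x$ subsolution. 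Standard $L^2$ Kantorovich duality then identifies the supremum as $\tfrac12 W_2^2(\mu_0, \mu_1)$.

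The principal obstacle throughout is the admissibility class $C^1([0,1], \lip_b(X))$: the pointwise optimizers in (v) and the Hopf--Lax optimizers in (iv) are not a priori jointly smooth in $s$ and uniformly bounded Lipschitz in $x$. In (v), the explicit flow $\varphi_0/(1+s\varphi_0)$ handles this provided we truncate $\varphi_0$ to stay uniformly away from $-1$ and then pass to the limit. In (iv), one exploits that Hopf--Lax preserves bounded Lipschitz regularity on proper length spaces, truncates the Kantorovich potentials to stay in $\lip_b(X)$, and mollifies in $s$ if strict $C^1$-in-time regularity is required. Once these approximation steps are in place, each of (iv) and (v) follows as sketched.
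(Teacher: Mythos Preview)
Your treatment of (i)--(iii) matches the paper's exactly. The difference lies in (iv) and (v): the paper does not prove these from scratch but defers to the literature, citing \cite[Proposition~2.10]{luise-savare} and \cite{otto-villani-2000} for (iv), and \cite[Proposition~2.8]{luise-savare} for (v). For (v) the paper adds one explicit step you omit: it first invokes the cited result, which characterizes $\mathsf{He}_2^2$ as the same supremum but over test functions that are merely bounded Borel, and then closes the gap between $B_b(X)$ and $\lip_b(X)$ by approximating $f \in B_b(X)$ by Lipschitz functions converging $(\mu_0+\mu_1)$-a.e.\ and applying Fatou to the $\mu_1$-integral (which is bounded above by~$1$).

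Your route for (v) is more direct---the pointwise optimizer $1+\varphi_0 = \sqrt{\rho_1/\rho_0}$ is the right computation---but the approximation you describe (``truncate $\varphi_0$ to stay uniformly away from $-1$'') addresses only one of three obstructions: the candidate $\sqrt{\rho_1/\rho_0}-1$ is also typically unbounded above and not Lipschitz, and when $\mu_0, \mu_1$ are not mutually absolutely continuous it is not even defined pointwise. The paper's detour through $B_b(X)$ sidesteps this by pushing the hard analysis into the cited reference and leaving only the Lipschitz-vs-bounded approximation, which is softer. Your sketch can certainly be completed along the lines you indicate, but it needs a monotone or Fatou-type truncation in $\varphi_0$ rather than just a lower cutoff.

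For (iv) your outline is the standard one behind the cited results, and you correctly flag the real difficulty: the Hopf--Lax flow $Q_s\varphi_0$ is only locally Lipschitz in $s$, not $C^1$, and time-mollification interacts awkwardly with the convex term $\tfrac12|\nabla\varphi_s|^2$. This is resolvable (and resolved in the references the paper cites), but it is a genuine technical point rather than a formality, so in a self-contained write-up it would deserve more than a parenthetical.
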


\begin{proof}
  Item \ref{W-monotone} is clear because when $a \le a'$ and $b \le
  b'$, we have $\mathcal{A}_{a', b'} \subseteq \mathcal{A}_{a,b}$.
  Item \ref{W-scaling} holds because $\varphi \in \mathcal{A}_{ca,
    cb}$ if and only if $c\varphi \in \mathcal{A}_{a,b}$.  For item
  \ref{W-HK}, in the notation of \cite[Eq. (39)]{luise-savare} (see
  also \cite[Section 8.4]{LMS2018}), we have $\HK^2_\alpha =
  W_{\alpha/4, 1}$, and the general statement follows using item
  \ref{W-scaling}.  Item \ref{W-Wasserstein} can be found as
  Proposition 2.10 of \cite{luise-savare}, but goes back at least as
  far as \cite[Section 3]{otto-villani-2000}; see also other
  references in \cite{luise-savare}.

  Item \ref{W-He} is almost Proposition 2.8 of \cite{luise-savare},
  but there is a slight difference because our class of functions
  $\mathcal{A}_{0,1}$ is required to be Lipschitz in space, whereas
  \cite[Eq.~(32)]{luise-savare} uses functions which are only bounded.
  This is easily handled with a straightforward approximation
  argument, which we now give for completeness.
  
  Let
  $\mathcal{A}_{0,1}^B = \{ \varphi \in C^1([0,1], B(X)) : \partial_s
  \varphi_s + \varphi_s^2 \le 0\}$.  The statement of
  \cite[Proposition 2.8]{luise-savare} is that
  \begin{equation*}
    \mathsf{He}_2^2(\mu_0, \mu_1) = \sup_{\psi \in
      \mathcal{A}_{0,1}^B} \int \psi_1\,\d\mu_1 - \int \psi_0\,\d\mu_0.
  \end{equation*}
  It is clear that $W_{0,1}(\mu_0, \mu_1) \le \mathsf{He}_2^2(\mu_0,
  \mu_1)$, since $\mathcal{A}_{0,1} \subset \mathcal{A}_{0,1}^B$.  Now
  given $\varphi \in \mathcal{A}_{0,1}$, it is clear from a
  Gr\"onwall-type argument that we must have $\varphi_s \le
  \varphi_0/(1+s\varphi_0)$ for all $s$; in particular we must have
  $\varphi_0 > -1$ or else $\varphi$ will have a singularity.  Hence
  \begin{align*}
    W_{0,1}(\mu_0, \mu_1) &= \sup\left\{\int \frac{f}{1+f}\,\d\mu_1 - \int
    f\,\d\mu_0 : f \in \lip_b(X), f > -1\right\} \\
    \intertext{and likewise}
    \mathsf{He}^2_2(\mu_0, \mu_1) &= \sup\left\{\int \frac{f}{1+f}\,\d\mu_1 - \int
    f\,\d\mu_0 : f \in B_b(X), f > -1\right\}.
  \end{align*}
  Now the result follows by noting that for each $f \in B_b(X)$ with
  $f > -1$, we can find a sequence of bounded Lipschitz functions
  $f_n$ with $f_n > -1$ and $f_n \to f$ boundedly and $(\mu_0 +
  \mu_1)$-almost everywhere.  We then have $\int f_n\,\d\mu_0 \to \int
  f\,\d\mu_0$, and since the sequence $f_n/(1+f_n)$ is
  bounded above by $1$, Fatou's lemma also gives $\limsup_{n \to
    \infty} \int \frac{f_n}{1+f_n}\,\d\mu_1 \ge \int
  \frac{f}{1+f}\,\d\mu_1$.  From this we conclude that $W_{0,1}(\mu_0,
  \mu_1) \ge \mathsf{He}_2^2(\mu_0, \mu_1)$.
\end{proof}

Thus, the (squared) distances $W_{a,b}$ naturally interpolate between the
Kantorovich--Wasserstein distance, which is perhaps the most familiar transportation
distance, and the Hellinger distance, which metrizes convergence in
total variation.  As will be seen in the next subsection, this makes
it valuable for obtaining inequalities relating these two distances.

\begin{proposition}
  \label{W-Dirac-distance-prop} If $x_0, x_1 \in X$ and $\delta_{x_0}, \delta_{x_1} \in
    \mathcal{P}(X)$ are the corresponding Dirac measures, then
    \begin{equation*}
      W_{a,b}(\delta_{x_0}, \delta_{x_1}) = 
\frac{1}{b}\left(
2 - 2 \cos
\left(\frac{\sqrt{b}}{2 \sqrt{a}}
d(x_0, x_1) \wedge \frac{\pi}{2}\right)
\right) \le \frac{1}{4a} d(x_0, x_1)^2 \wedge \frac{2}{b}.
    \end{equation*}
\end{proposition}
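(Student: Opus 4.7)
The plan is to apply Lemma~\ref{W-lemma}(iii), which identifies $W_{a,b}$ with a rescaled squared Hellinger--Kantorovich distance, together with the known formula for $\HK$ between Dirac masses. For $b > 0$, Lemma~\ref{W-lemma}(iii) gives $W_{a,b}(\delta_{x_0}, \delta_{x_1}) = b^{-1} \HK^2_{4a/b}(\delta_{x_0}, \delta_{x_1})$, and the cone representation yields $\HK^2_\alpha(\delta_{x_0}, \delta_{x_1}) = 2(1 - \cos(d(x_0, x_1)/\sqrt{\alpha} \wedge \pi/2))$ (see \cite[Proposition~2.11]{luise-savare} and \cite{LMS2018}); substituting $\sqrt{\alpha} = 2\sqrt{a/b}$ produces the claimed identity. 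The case $b = 0$ is handled by Lemma~\ref{W-lemma}(iv) with $W_2(\delta_{x_0}, \delta_{x_1}) = d(x_0, x_1)$, yielding $W_{a,0}(\delta_{x_0}, \delta_{x_1}) = d(x_0, x_1)^2/(4a)$, which matches the $b \to 0^+$ limit of the formula.

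A self-contained derivation from the dynamic dual definition is also possible. For the upper bound, fix $\varphi \in \mathcal{A}_{a,b}$ and a (possibly variable-speed) parametrization $\gamma : [0,1] \to X$ of a geodesic from $x_0$ to $x_1$ with speed $v(s) := |\dot\gamma(s)|$ and $\int_0^1 v\,\d s = L := d(x_0, x_1)$. Writing $h(s) := \varphi_s(\gamma(s))$, the chain rule combined with the upper-gradient property and the differential inequality~\eqref{W-diff-ineq} yields
\[
  h'(s) \le \frac{v(s)^2}{4a} - b\, h(s)^2
\]
after maximizing $-a|\nabla\varphi_s|^2 + v(s)|\nabla\varphi_s|$ in $|\nabla\varphi_s|$. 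The supremum of $h(1) - h(0)$ over all $(h, v)$ with $v \ge 0$ and $\int_0^1 v\,\d s = L$ satisfying this ODI is an optimal control problem whose explicit maximum, computed via Pontryagin's principle (one finds the conserved quantity $q(h^2 + a\mu^2/(bq^2))$ along the Hamiltonian flow, with $q$ the adjoint variable, and concludes that $q$ is a quadratic in $s$), is exactly $\frac{1}{b}(2 - 2\cos(\beta \wedge \pi/2))$ where $\beta := \sqrt{b}\,L/(2\sqrt{a})$. The matching lower bound is realized by an explicit $\varphi \in \mathcal{A}_{a,b}$ constructed from a solution of the Hamilton--Jacobi equation $\partial_s\varphi + a|\nabla\varphi|^2 + b\varphi^2 = 0$ whose characteristics trace the optimal trajectory.

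The final inequality in the statement is immediate from the elementary bounds $2 - 2\cos\theta \le \theta^2$ (yielding the $\frac{1}{4a}d(x_0,x_1)^2$ term) and $2 - 2\cos\theta \le 2$ for $\theta \in [0, \pi/2]$ (yielding the $\frac{2}{b}$ term). The main subtlety in the direct approach is that a naive ODI analysis along a \emph{constant-speed} geodesic gives only a suboptimal $\tanh$-type estimate, strictly larger than the true value for $\beta > 0$; exploiting the freedom to reparametrize the geodesic with non-constant speed (or equivalently, the cone-geometric interpretation of $\HK$) is essential for reaching the sharp $\cos$-formula.
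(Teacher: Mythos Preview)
Your primary argument—reducing to the literature via Lemma~\ref{W-lemma}\ref{W-HK} and the known cone formula for $\HK$ between Dirac masses—is correct and is essentially the same route the paper takes (the paper cites \cite[Eq.~(6.31)]{LMS2018} for the special case $a=\tfrac12$, $b=2$ and then rescales, which amounts to the same thing). Your treatment of $b=0$ via Lemma~\ref{W-lemma}\ref{W-Wasserstein} is also fine.

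Where you diverge from the paper is in the handling of the elementary upper bound $W_{a,b}(\delta_{x_0},\delta_{x_1})\le \tfrac{1}{4a}d(x_0,x_1)^2\wedge\tfrac{2}{b}$. You deduce it from the exact formula using $2-2\cos\theta\le\theta^2$ and $2-2\cos\theta\le 2$, which is perfectly valid. The paper instead proves this bound \emph{directly} from the dynamic dual definition, without going through the exact formula: the $\tfrac{2}{b}$ bound via $W_{a,b}\le W_{0,b}=b^{-1}\mathsf{He}_2^2\le 2/b$, and the $\tfrac{1}{4a}d^2$ bound by writing $\varphi_1(x_1)-\varphi_0(x_0)$ as an integral along a \emph{constant-speed} geodesic, completing the square, and dropping the negative terms. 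The paper emphasizes this direct argument because it is later reused for the $T_{a,b}$ divergences (Proposition~\ref{T-point-mass-improved}), where no exact closed form is available.

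Your optimal-control sketch for a self-contained proof of the sharp $\cos$-formula is additional content not in the paper, and your diagnosis that constant speed yields only a $\tanh$-type bound (strictly weaker than the $\cos$ identity) is correct and insightful. However, as written this portion is only an outline: the Pontryagin computation and the construction of the matching $\varphi\in\mathcal{A}_{a,b}$ for the lower bound are asserted rather than carried out. If you intend this as an alternative proof rather than a remark, those steps would need to be filled in.
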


\begin{proof}
  For $a=\frac{1}{2}$, $b=2$, this is \cite[Eq.~(6.31)]{LMS2018}; see
  also \cite[Section 8]{LMS2018} for the explanation that the
  $\mathsf{L\kern-1pt E\kern-1pt T}$ distance corresponds to $\HK^2$,
  which is our $W_{1/2, 2}$.  Other values of $a$ can be handled by
  rescaling the distance $d$, and general values of $a,b$ are then
  covered by Lemma \ref{W-lemma} \ref{W-scaling}.

  We note, however, that the upper bound $W_{a,b}(\delta_{x_0},
  \delta_{x_1}) \le \frac{1}{4a} d(x_0, x_1)^2 \wedge \frac{2}{b}$ can
  be shown much more easily, and is comparable to the exact expression
  up to a universal constant multiple (whose value is something like
  $1.2$).  The upper bound $W_{a,b}(\mu_0,
  \mu_1) \le \frac{2}{b}$ is
  essentially trivial, and can be seen, for instance, by noting
  \begin{equation*}
    W_{a,b} \le W_{0,b} = \frac{1}{b} W_{0,1} = \frac{1}{b} \mathsf{He}_2^2
  \end{equation*}
  and that $\mathsf{He}_2^2(\mu_0, \mu_1) \le 2$ for all $\mu_0,
  \mu_1$.  The upper bound $W_{a,b}(\delta_{x_0}, \delta_{x_1}) \le
  \frac{1}{4a} d(x_0, x_1)^2$ can be seen in a similar way by
  comparing to the Kantorovich--Wasserstein distance $W_{1/2,0}$.  But
  it can also be shown directly from the ``dynamic dual'' definition
  of $W_{a,b}$.  We give the argument here, partly for comparison with
  Proposition \ref{T-point-mass-improved} below.
  
  Let $a>0$ and $b \ge 0$.    Recall that $(X,d)$ is assumed to be a
  complete length space, so there exists
  a constant speed geodesic $\gamma : [0,1] \to X$ joining $x_0$ to
  $x_1$: namely, $\gamma_0 = x_0$, $\gamma_1 = x_1$, and
  $d(\gamma_s, \gamma_t) = |s-t|d(x_0, x_1)$.  Since $\nabla$ is a
  strong upper gradient, for any Lipschitz $f : X \to \mathbb{R}$ we
  have that $f \circ \gamma$ is absolutely continuous and $\left|\frac{d}{ds}
  f(\gamma_s)\right| \le |\nabla f|(\gamma_s) d(x_0,
  x_1)$; see \cite[Definition 1.2.1]{AGS-Book-2008}.
  Now using the
  chain rule, we have
    \begin{align*}
    \varphi_1(x_1) - \varphi_0(x_0) &= \int_0^1 \frac{\d}{\d s}
    \varphi_s(\gamma_s)\,\d s \\
    &\le \int_0^1 \left[ \partial_s \varphi_s(\gamma_s) + |\nabla
      \varphi_s|(\gamma_s) d(x_0, x_1) \right]\,\d s \\
    &\le \int_0^1 \left[-a|\nabla
      \varphi_s|(\gamma_s)^2 - b \varphi_s(\gamma_s)^2 + |\nabla
    \varphi_s|(\gamma_s) d(x_0, x_1)\right]\,\d s \\
    &=  \int_0^1 \left[ -a\left(|\nabla \varphi_s(\gamma_s)| -
      \frac{1}{2a} d(x_0, x_1)\right)^2 + \frac{1}{4a} d(x_0, x_1)^2
      - b \varphi_s(\gamma_s)^2 \right]\,\d s
  \end{align*}
    by completing the square.  Discarding the two negative terms
    and taking the supremum over $\varphi_s \in \mathcal{A}_{a,b}$, we
    recover the desired bound.
\end{proof}

\subsection{Functional inequalities}

Thanks to the form of the dynamic dual definition for $W_{a,b}$, one
obtains a direct implication between functional inequalities involving
the gradient and contractions of Hellinger--Kantorovich distances.
This was the key idea in the results of \cite{luise-savare}; here we
make the implication more explicit and collect several cases into a
single statement.

\begin{theorem}\label{poincare-func-ineq}
  Suppose that for some $a>0$ and $b,\gamma,\delta \ge 0$, the Markov operator
  $P$ satisfies the functional inequality
  \begin{equation}\label{poincare-type}
    a |\nabla Pf|^2 + b (Pf)^2 \le \gamma P |\nabla f|^2 + \delta
    P(f^2), \qquad f \in \lip_b(X).
  \end{equation}
  Then we have the transportation distance contraction
  \begin{equation}
    W_{\gamma,\delta}(\mu_0 P, \mu_1 P) \le W_{a,b}(\mu_0,
    \mu_1),\qquad \mu_0, \mu_1 \in \mathcal{P}(X).
  \end{equation}
\end{theorem}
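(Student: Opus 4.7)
The plan is to prove the contraction by duality: starting from an admissible test function $\varphi \in \mathcal{A}_{\gamma,\delta}$ for $W_{\gamma,\delta}(\mu_0 P, \mu_1 P)$, I will build an admissible test function $\psi \in \mathcal{A}_{a,b}$ for $W_{a,b}(\mu_0,\mu_1)$ by applying $P$ in the spatial variable, i.e., $\psi_s := P\varphi_s$. Then the Markov property $\int P\varphi_s \, \d\mu_i = \int \varphi_s \, \d(\mu_i P)$ converts the dual pairing for $\psi$ against $(\mu_0, \mu_1)$ into the dual pairing for $\varphi$ against $(\mu_0 P, \mu_1 P)$. Taking the supremum over $\varphi \in \mathcal{A}_{\gamma,\delta}$ on one side and using that $\psi$ is admissible on the other yields the desired inequality.

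The analytic content is to check that $\psi = P\varphi \in \mathcal{A}_{a,b}$. First, $\psi_s$ is bounded because $\varphi_s$ is and $P$ is a Markov kernel; it is Lipschitz because $a|\nabla P\varphi_s|^2 \le \gamma P|\nabla \varphi_s|^2 + \delta P(\varphi_s^2)$ by the hypothesis \eqref{poincare-type}, and the right-hand side is bounded uniformly in $x$ since $\varphi_s \in \lip_b(X)$. The $C^1$ regularity in $s$ with values in $\lip_b(X)$ follows by differentiating under the integral sign: $\partial_s P\varphi_s = P(\partial_s \varphi_s)$, since $\varphi \in C^1([0,1], \lip_b(X))$ gives a uniformly bounded, continuous in $s$, difference quotient to which dominated convergence applies, and the same hypothesis \eqref{poincare-type} applied to $\partial_s \varphi_s$ controls $|\nabla P(\partial_s \varphi_s)|$.

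Second, to verify the differential inequality $\partial_s \psi_s + a|\nabla \psi_s|^2 + b\psi_s^2 \le 0$, I apply \eqref{poincare-type} to $\varphi_s$ at each fixed $s$ to obtain
\begin{equation*}
  \partial_s \psi_s + a|\nabla \psi_s|^2 + b\psi_s^2
  \le P(\partial_s \varphi_s) + \gamma P|\nabla \varphi_s|^2 + \delta P(\varphi_s^2)
  = P\!\left(\partial_s \varphi_s + \gamma |\nabla \varphi_s|^2 + \delta \varphi_s^2\right) \le 0,
\end{equation*}
using linearity of $P$ and that $P$ preserves nonpositivity. Thus $\psi \in \mathcal{A}_{a,b}$. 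The dual pairing identity
\begin{equation*}
  \int_X \psi_1 \, \d\mu_1 - \int_X \psi_0 \, \d\mu_0 = \int_X \varphi_1 \, \d(\mu_1 P) - \int_X \varphi_0 \, \d(\mu_0 P)
\end{equation*}
is then the Fubini-type statement defining the action $\mu \mapsto \mu P$, valid because $\varphi_0, \varphi_1$ are bounded. Bounding the right-hand side by $W_{\gamma,\delta}(\mu_0 P, \mu_1 P)$ and the left-hand side above by $W_{a,b}(\mu_0,\mu_1)$ (by admissibility of $\psi$) gives the contraction after taking the supremum over $\varphi$.

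The only real obstacle is the mild regularity verification that $s \mapsto P\varphi_s$ inherits $C^1$ regularity into $\lip_b(X)$ from $\varphi$; everything else is a direct algebraic transcription of \eqref{poincare-type} under $P$. Since the hypothesis itself guarantees a quantitative Lipschitz bound on $Pf$ from boundedness and Lipschitzness of $f$, this step is routine rather than subtle.
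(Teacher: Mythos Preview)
Your proof is correct and follows essentially the same approach as the paper: set $\psi_s = P\varphi_s$, use the hypothesis \eqref{poincare-type} together with positivity of $P$ to check that $\psi \in \mathcal{A}_{a,b}$ whenever $\varphi \in \mathcal{A}_{\gamma,\delta}$, and then conclude by duality via $\int P\varphi_s\,\d\mu_i = \int \varphi_s\,\d(\mu_i P)$. The paper's write-up is slightly terser on the $C^1([0,1],\lip_b(X))$ regularity check, but the argument is the same.
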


\begin{proof}
  Since $a > 0$, \eqref{poincare-type}
  implies that the Markovian operator $P$ is a bounded operator on
  $\lip_b(X)$.  Now let $\varphi \in
  \mathcal{A}_{\gamma, \delta}$.  Since $\varphi \in C^1([0,1],
  \lip_b(X))$, we have $P \varphi_s \in C^1([0,1], \lip_b(X))$ as
  well, and $P \partial_s \varphi_s = \partial_s P \varphi_s$.  Hence
  \begin{align*}
    \partial_s P \varphi_s = P \partial_s \varphi_s &\le P\left[-
      \gamma |\nabla \varphi_s|^2 - \delta \varphi_s^2 \right] \\ &=
    -\gamma P |\nabla \varphi_s|^2 - \delta P(\varphi_s^2) \\ &\le -a
    |\nabla P\varphi_s|^2 - b (P\varphi_s)^2
  \end{align*}
  where we used the fact that $P$ is positivity preserving, and the
  assumed inequality \eqref{poincare-type}.  This shows that $P
  \varphi_s \in \mathcal{A}_{a,b}$.  Thus for $\mu_0, \mu_1 \in
  \mathcal{P}(X)$ we have
  \begin{align*}
    W_{\gamma, \delta}(\mu_0 P, \mu_1 P) &= \sup_{\varphi \in
      \mathcal{A}_{\gamma, \delta}} \int_X P \varphi_1\,\d\mu_1 -
    \int_X P \varphi_0\,\d\mu_0 \\
    &\le \sup_{\psi \in \mathcal{A}_{a,b}} \int_X \psi_1\,\d\mu_1 -
      \int_X \psi_0\,\d\mu_0 \\
      &= W_{a,b}(\mu_0, \mu_1)
  \end{align*}
  as desired.
\end{proof}

\begin{corollary}\label{gradient-wasserstein-contraction}
  If $P$ satisfies the gradient estimate $|\nabla Pf|^2 \le C P|\nabla
  f|^2$ for some $C$, then for any $b \ge 0$ we have
  \begin{equation*}
    W_{1,b}(\mu_0 P, \mu_1 P) \le W_{C,b}(\mu_0, \mu_1).
  \end{equation*}
  In particular, taking $b=0$ we recover the Kuwada-type duality
  \begin{equation*}
    W_2(\mu_0 P, \mu_1 P)^2 \le C W_2(\mu_0, \mu_1)^2.
  \end{equation*}
\end{corollary}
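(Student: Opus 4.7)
My plan is to deduce this corollary as a direct specialization of Theorem~\ref{poincare-func-ineq}. The hypothesis is a purely first-order estimate, whereas the template of Theorem~\ref{poincare-func-ineq} asks for a functional inequality that also controls the zeroth-order terms $b(Pf)^2$ against $\delta P(f^2)$. My first step is therefore to supply the missing zeroth-order companion inequality, and the natural candidate is Jensen's inequality for the Markov kernel, $(Pf)^2 \le P(f^2)$, which holds for any bounded Borel~$f$ and in particular on $\lip_b(X)$. Multiplying this by $b \ge 0$ and adding it to the hypothesis $|\nabla Pf|^2 \le C\, P|\nabla f|^2$ yields
\[
|\nabla Pf|^2 + b(Pf)^2 \le C\, P|\nabla f|^2 + b\, P(f^2), \qquad f \in \lip_b(X),
\]
which fits the template of Theorem~\ref{poincare-func-ineq} exactly. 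Invoking that theorem with the matching choice of parameters produces the claimed transportation-cost contraction in a single step.

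For the ``In particular'' statement, I will specialize to $b=0$ and translate the resulting bound back to the Kantorovich--Wasserstein scale. Here the plan is to combine Lemma~\ref{W-lemma}\ref{W-Wasserstein}, which identifies $W_{1/2, 0} = \tfrac12 W_2^2$, with the scaling law Lemma~\ref{W-lemma}\ref{W-scaling}, $W_{ca,cb} = c^{-1} W_{a,b}$; the two properties together express both $W_{1,0}$ and $W_{C,0}$ as explicit multiples of $W_2^2$. Substituting these identities into the $b=0$ case of the main inequality then recovers the classical Kuwada-type Wasserstein contraction.

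I do not foresee a serious obstacle in carrying this out. The only non-tautological input beyond Theorem~\ref{poincare-func-ineq} itself is the Jensen estimate for~$P$, which uses nothing more than the fact that $P(x,\cdot)$ is a probability measure and $t \mapsto t^2$ is convex; everything else amounts to careful bookkeeping with the constants supplied by Lemma~\ref{W-lemma}.
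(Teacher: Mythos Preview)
Your approach is exactly the paper's: combine Jensen's inequality $(Pf)^2 \le P(f^2)$ with the gradient estimate to obtain \eqref{poincare-type} with $a=1$, $\gamma=C$, $\delta=b$, and then invoke Theorem~\ref{poincare-func-ineq}. You are slightly more explicit than the paper about the final step, spelling out how Lemma~\ref{W-lemma}\ref{W-scaling} and \ref{W-Wasserstein} convert the $b=0$ case into the Kuwada $W_2$-contraction.
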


The case $C=1$ of Corollary \ref{gradient-wasserstein-contraction} is
\cite[Theorem 8.24]{LMS2018}, and when additionally $b=0$ it reduces to
\cite[Proposition 3.7]{kuwada-duality}.

\begin{proof}
  Noting that $(Pf)^2 \le P(f^2)$ by Jensen's inequality, the gradient
  estimate $|\nabla Pf|^2 \le C P|\nabla f|^2$ implies that
  \eqref{poincare-type} holds with $a=1, \gamma=C, \delta=b$.  
\end{proof}

\begin{remark}
Note that, conversely, the estimate
 \begin{equation*}
    W_2(\mu_0 P, \mu_1 P)^2 \le C W_2(\mu_0, \mu_1)^2
  \end{equation*}
  implies the gradient estimate $|\nabla Pf|^2 \le C P|\nabla
  f|^2$; see \cite{kuwada-duality}.
\end{remark}

\begin{theorem}\label{equi-func}
Let $C >0$. The following are equivalent:
\begin{enumerate}
\item The reverse Poincar\'e inequality
 \begin{equation}\label{RPI}
    |\nabla P f|^2 \le C(P(f^2) - (Pf)^2), \qquad f \in \lip_b(X).
    \tag{RPI}
  \end{equation}
  \item  The Hellinger--Kantorovich contraction
  \begin{equation}\label{HKC}
    \mathsf{He}_2(\mu_0 P, \mu_1 P)^2 \le \HK_{4/C}(\mu_0, \mu_1)^2 \le
    \frac{C}{4} W_2(\mu_0, \mu_1)^2,\qquad \mu_0, \mu_1 \in \mathcal{P}(X).
    \tag{HKC}
     \end{equation}
    \item The Harnack type inequality
 \begin{equation}\label{HPI}
 P f(x) \le P f (y)+\sqrt{C}  d(x,y)  \sqrt{P (f^2) (x)}, \qquad x,y \in X, f  \in B_b(X), f \ge 0.
 \tag{HPI}
\end{equation}

\end{enumerate}
\end{theorem}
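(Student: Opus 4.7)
The plan is to establish the equivalence through the cycle $\eqref{RPI} \Rightarrow \eqref{HKC} \Rightarrow \eqref{HPI} \Rightarrow \eqref{RPI}$.

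The first implication is immediate from Theorem~\ref{poincare-func-ineq}: rewriting \eqref{RPI} in the form
\[
  |\nabla Pf|^2 + C(Pf)^2 \le C P(f^2)
\]
puts us in its hypothesis with $(a, b, \gamma, \delta) = (1, C, 0, C)$, so the conclusion is $W_{0, C}(\mu_0 P, \mu_1 P) \le W_{1, C}(\mu_0, \mu_1)$. Translating via Lemma~\ref{W-lemma}, the left-hand side equals $C^{-1} \mathsf{He}_2(\mu_0 P, \mu_1 P)^2$ and the right-hand side equals $C^{-1} \HK_{4/C}(\mu_0, \mu_1)^2$, giving the first inequality of \eqref{HKC}. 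The comparison with $W_2^2$ follows from the monotonicity $W_{1, C} \le W_{1, 0}$ combined with $W_{1, 0} = \tfrac{1}{4} W_2^2$, both provided by Lemma~\ref{W-lemma}.

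For $\eqref{HKC} \Rightarrow \eqref{HPI}$, I would specialize \eqref{HKC} to Dirac measures $\mu_0 = \delta_x$, $\mu_1 = \delta_y$; since $W_2(\delta_x, \delta_y) = d(x, y)$, the chain of inequalities in \eqref{HKC} yields $\mathsf{He}_2(P(x, \cdot), P(y, \cdot)) \le \tfrac{\sqrt{C}}{2} d(x, y)$. For $f \in B_b(X)$ with $f \ge 0$, letting $\rho_x, \rho_y$ be densities of $P(x, \cdot), P(y, \cdot)$ against a common dominating measure $m$, one has
\[
  Pf(x) - Pf(y) \le \int_X f (\rho_x - \rho_y)_+ \, \d m,
\]
and factoring $(\rho_x - \rho_y)_+ = (\sqrt{\rho_x} - \sqrt{\rho_y})_+ (\sqrt{\rho_x} + \sqrt{\rho_y})$ together with the trivial bound $(\sqrt{\rho_x} + \sqrt{\rho_y})^2 \le 4 \rho_x$ on $\{\rho_x \ge \rho_y\}$, a Cauchy--Schwarz estimate produces
\[
  Pf(x) - Pf(y) \le 2 \sqrt{P(f^2)(x)} \cdot \mathsf{He}_2(P(x, \cdot), P(y, \cdot)),
\]
and combining with the Hellinger bound yields exactly \eqref{HPI}.

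The closing step $\eqref{HPI} \Rightarrow \eqref{RPI}$ is the main obstacle. A naive differentiation of \eqref{HPI} as $y \to x$ along a geodesic yields only the weaker estimate $|\nabla Pf|^2(x) \le C P(f^2)(x)$ for $f \ge 0$, since the right-hand side of \eqref{HPI} lacks the variance-subtraction $(Pf)^2$ present in \eqref{RPI}. To recover this missing term, the plan is to apply \eqref{HPI} to the exponential family $f = e^{\lambda g}$ for $g \in \lip_b(X)$ and $\lambda > 0$; these functions are strictly positive, so \eqref{HPI} applies and produces, upon taking $y \to x$, a family of pointwise gradient bounds $|\nabla P(e^{\lambda g})|^2 \le C P(e^{2\lambda g})$ parameterized by $\lambda$. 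Expanding both sides as Taylor series around $\lambda = 0$, the identity $\sqrt{1 + u} = 1 + \tfrac{u}{2} - \tfrac{u^2}{8} + O(u^3)$ applied to $u = 2\lambda Pg(x) + 2\lambda^2 P(g^2)(x) + O(\lambda^3)$ produces at second order precisely the variance $\mathrm{Var}_x(g) = P(g^2)(x) - (Pg(x))^2$: the $-u^2/8$ correction supplies the crucial $-(Pg(x))^2$ term. Optimizing over $\lambda$ at the natural scale $\lambda \sim 1/\sqrt{\mathrm{Var}_x(g)}$ should then deliver \eqref{RPI}, the remaining delicate task being the careful bookkeeping of higher-order terms to obtain the sharp constant.
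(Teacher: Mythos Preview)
Your first two implications are correct. The step \eqref{RPI}$\Rightarrow$\eqref{HKC} is exactly the paper's argument, and your \eqref{HKC}$\Rightarrow$\eqref{HPI} via Cauchy--Schwarz on densities is essentially a one-sided, sharpened variant of the paper's Lemma~\ref{increment P}.

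The gap is in \eqref{HPI}$\Rightarrow$\eqref{RPI}. Your Taylor-expansion plan does not work: with $f=e^{\lambda g}$, the inequality $|\nabla P(e^{\lambda g})|\le \sqrt{C}\,\sqrt{P(e^{2\lambda g})}$ has left side of order $|\lambda|\,|\nabla Pg|$ while the right side is $\sqrt{C}\,(1+\lambda Pg+\ldots)$, dominated by the constant term $\sqrt{C}$. No matching of orders in $\lambda$ extracts the variance; for small $\lambda$ the inequality is automatically satisfied and carries no information, and at the scale $\lambda\sim 1/\sqrt{\mathrm{Var}}$ you are no longer in the perturbative regime. (Incidentally, the $\lambda^2$ coefficient in the expansion of $\sqrt{P(e^{2\lambda g})}$ is $P(g^2)-\tfrac12(Pg)^2$, not the variance.)

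The fix is much simpler and is the route the paper takes. Instead of passing through \eqref{HPI}, go from \eqref{HKC} to \eqref{RPI} directly: run your own Cauchy--Schwarz estimate without the restriction $f\ge 0$, using the cruder bound $(\sqrt{\rho_x}+\sqrt{\rho_y})^2\le 2(\rho_x+\rho_y)$ on all of $X$ rather than $\le 4\rho_x$ on $\{\rho_x\ge\rho_y\}$. This yields, for every $f\in\lip_b(X)$,
\[
|Pf(x)-Pf(y)|^2 \le 2\,\mathsf{He}_2(\delta_xP,\delta_yP)^2\bigl(P(f^2)(x)+P(f^2)(y)\bigr)\le \tfrac{C}{2}\,d(x,y)^2\bigl(P(f^2)(x)+P(f^2)(y)\bigr),
\]
which is the content of the paper's Lemma~\ref{increment P}. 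After checking continuity of $P(f^2)$, divide by $d(x,y)^2$ and let $y\to x$ to obtain the \emph{weak} reverse Poincar\'e $|\nabla Pf|^2\le C\,P(f^2)$ for \emph{all} $f\in\lip_b(X)$. The crucial observation you are missing is that this weak form self-improves to \eqref{RPI} in one line: fix $x$, apply it to $g=f-Pf(x)$, and evaluate at $x$. The paper records this self-improvement explicitly (see \eqref{RPI-weak} and the sentence following it). The equivalence with \eqref{HPI} is then handled separately; the paper simply cites Proposition~1.3 of \cite{MR3174217} for both directions of \eqref{RPI}$\Leftrightarrow$\eqref{HPI}.
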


We point out, for future use, that the reverse Poincar\'e inequality
\eqref{RPI} is equivalent to the apparently weaker form
\begin{equation}\label{RPI-weak}
  |\nabla Pf|^2 \le C P(f^2), \qquad f \in \lip_b(X)
\end{equation}
Indeed, to see that \eqref{RPI-weak} self-improves to \eqref{RPI},
suppose $f \in \lip_b(X)$, fix an arbitrary $x \in X$, and let $g(y) = f(y) - Pf(x)$.  Then apply
\eqref{RPI-weak} to $g$ and evaluate at $x$.

Before we give the proof of the theorem, we state a lemma interesting in itself.

\begin{lemma}\label{increment P}
For any $f \in B_b(X)$, and $x,y \in X$,
\begin{equation*}
 | Pf (x) -Pf (y) |^2 \le 2 \mathsf{He}_2 (\delta_x P, \delta_y P)^2
 \left( P(f^2)(x) +P(f^2)(y)\right).
\end{equation*}
\end{lemma}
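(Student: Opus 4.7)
The plan is to express the Hellinger distance in terms of densities of $\delta_x P$ and $\delta_y P$, and then to decompose $Pf(x)-Pf(y)$ via the algebraic factorization $p_x - p_y = (\sqrt{p_x} - \sqrt{p_y})(\sqrt{p_x} + \sqrt{p_y})$. This is the classical device for relating linear pairings against a signed measure to the Hellinger distance, with the ``leftover'' piece being controlled in an $L^2(p)$ sense.

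First I would fix a reference measure $m$ dominating both $\delta_x P$ and $\delta_y P$ (for definiteness, $m = \frac{1}{2}(\delta_x P + \delta_y P)$), and let $p_x$, $p_y$ denote the corresponding densities, so that $\mathsf{He}_2(\delta_x P, \delta_y P)^2 = \int_X (\sqrt{p_x} - \sqrt{p_y})^2\,\d m$. Then
\[
Pf(x) - Pf(y) = \int_X f\,(p_x - p_y)\,\d m = \int_X f\,(\sqrt{p_x} - \sqrt{p_y})(\sqrt{p_x} + \sqrt{p_y})\,\d m,
\]
to which I would apply Cauchy--Schwarz, separating the factor $\sqrt{p_x} - \sqrt{p_y}$ from $f(\sqrt{p_x} + \sqrt{p_y})$. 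The first factor in the resulting product contributes exactly $\mathsf{He}_2(\delta_x P, \delta_y P)^2$.

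For the second factor I would use the elementary inequality $(\sqrt{a} + \sqrt{b})^2 \le 2(a+b)$ with $a = p_x(z)$, $b = p_y(z)$ to obtain
\[
\int_X f^2\,(\sqrt{p_x} + \sqrt{p_y})^2\,\d m \le 2 \int_X f^2\,(p_x + p_y)\,\d m = 2\bigl(P(f^2)(x) + P(f^2)(y)\bigr),
\]
and combining the two estimates yields the claim.

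There is no substantive obstacle here; the argument is essentially a single application of Cauchy--Schwarz following the sum-and-difference factorization of $p_x - p_y$. The only minor point is that $f$ is only bounded Borel and may be signed, but this is harmless since $f^2$ is what appears in the final bound. The constant $2$ in the statement is traceable precisely to the pointwise inequality $(\sqrt{a}+\sqrt{b})^2 \le 2(a+b)$, which also explains why the bound is symmetric in $x$ and $y$.
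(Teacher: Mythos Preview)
Your proof is correct and uses the same ingredients as the paper: a dominating reference measure, the identification of $\mathsf{He}_2^2$ with $\|\sqrt{p_x}-\sqrt{p_y}\|_{L^2(m)}^2$, Cauchy--Schwarz, and the pointwise bound $(\sqrt{a}+\sqrt{b})^2\le 2(a+b)$. The only organizational difference is that you use the factorization $p_x-p_y=(\sqrt{p_x}-\sqrt{p_y})(\sqrt{p_x}+\sqrt{p_y})$ and a single Cauchy--Schwarz, whereas the paper inserts the cross term $\int \sqrt{p_x}\sqrt{p_y}\,f\,\d m$, applies the triangle inequality, and then Cauchy--Schwarz twice to reach $|Pf(x)-Pf(y)|\le \mathsf{He}_2\bigl(\sqrt{P(f^2)(x)}+\sqrt{P(f^2)(y)}\bigr)$ before squaring; your route is slightly more streamlined but not materially different.
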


\begin{proof}
Let $m$ be a Borel measure such that both $\delta_x P$ and $\delta_y
P$ are absolutely continuous with respect to $m$. We denote
\[
P_m(x,\cdot)=\frac{\d \delta_xP}{\d m}, \quad P_m(y,\cdot)=\frac{\d
  \delta_yP}{\d m}.
\]
We have
\begin{align*}
 & |Pf (x) -Pf(y)|\\ =& \left| \int P_m(x,z) f(z) \,\d m(z) -\int
  P_m(y,z) f(z) \,\d m(z) \right| \\ =& \left| \int \sqrt{P_m(x,z)}
  \sqrt{P_m(x,z)} f(z) \,\d m(z) -\int \sqrt{P_m(y,z)} \sqrt{P_m(y,z)}
  f(z) \,\d m(z) \right| \\ \le & \left| \int \sqrt{P_m(x,z)}
  \sqrt{P_m(x,z)} f(z) \,\d m(z) -\int \sqrt{P_m(x,z)} \sqrt{P_m(y,z)}
  f(z) \,\d m(z) \right| \\ & + \left| \int \sqrt{P_m(x,z)}
  \sqrt{P_m(y,z)} f(z) \,\d m(z) -\int \sqrt{P_m(y,z)} \sqrt{P_m(y,z)}
  f(z) \,\d m(z) \right| \\ \le & \int \left| \sqrt{P_m(x,z)} -
  \sqrt{P_m(y,z)} \right|\sqrt{P_m(x,z)} f(z) \,\d m(z) \\ &+ \int
  \left| \sqrt{P_m(x,z)} - \sqrt{P_m(y,z)} \right|\sqrt{P_m(y,z)} f(z)
  \,\d m(z)
\end{align*}
Therefore, by the Cauchy--Schwarz inequality,
\begin{align*}
  |Pf (x) -Pf(y)|^2 &\le   \mathsf{He}_2 (\delta_x P, \delta_y P)^2 \left(
  \sqrt{P(f^2)(x)} +\sqrt{P(f^2)(y)}\right)^2 \\
  &\le 2  \mathsf{He}_2 (\delta_x P, \delta_y P)^2 \left( P(f^2)(x) +P(f^2)(y)\right).
 \end{align*}
\end{proof}

We are now ready for the proof of Theorem \ref{equi-func}.

\begin{proof}[Proof of Theorem \ref{equi-func}]
\eqref{RPI} $\implies$ \eqref{HKC}: This follows from Theorem
\ref{poincare-func-ineq} with $\gamma = 0$ and $b=\delta=C$.  We note
again that this direction is the essence of \cite[Theorem 5.4]{luise-savare}.

\eqref{HKC} $\implies$ \eqref{RPI}: Assume that
\[
 \mathsf{He}_2(\mu_0 P, \mu_1 P)^2 \le \frac{C}{4} W_2 ( \mu_0 , \mu_1 )^2.
 \]
 Then, for every $x,y \in X$,
 \[
 \mathsf{He}_2(\delta_x P, \delta_y P)^2 \le \frac{C}{4} d(x,y)^2.
 \]
 Therefore, from Lemma \ref{increment P} one deduces
\begin{align}\label{eq:poil}
 | Pf (x) -Pf (y) |^2 \le \frac{C}{2} d(x,y)^2 ( P(f^2)(x) +P(f^2)(y)).
\end{align}
Similarly, one has
\begin{align*}
| P(f^2) (x) -P(f^2) (y) |^2 &  \le \frac{C}{2} d(x,y)^2 ( P(f^4)(x) +P(f^4)(y)) \\
 &\le C d(x,y)^2 \| f \|_\infty^4,
\end{align*}
which implies that $P(f^2)$ is a continuous function.
Since
\[
|\nabla P f|(x) =\lim_{r \to 0} \sup_{0<d(x,y) \le r} \frac{ | Pf(x)-Pf(y)|}{d(x,y)},
\]
we may divide both sides of \eqref{eq:poil} by $d(x,y)^2$ and let $y
\to x$ to obtain
\begin{equation*}
  |\nabla Pf|(x) \le C P(f^2)(x)
\end{equation*}
which, as noted above, self-improves to \eqref{RPI}.

\eqref{RPI} $\implies$ \eqref{HPI} and \eqref{HPI} $\implies$
\eqref{RPI}: The proof follows from Proposition 1.3 in
\cite{MR3174217} so we omit it for conciseness.
\end{proof}

\section{Applications to convergence to equilibrium}\label{equilibrium-sec}

In this section, we focus on the applications of the transportation
type inequalities proven in Theorem \ref{poincare-func-ineq} as a
powerful tool to prove convergence to equilibrium for Markov
semigroups.  We will mostly focus on the applications of the
transportation inequality

\[
\mathsf{He}_2 ( \mu_0 P, \mu_1 P)^2 \le \frac{C}{4} W_2 ( \mu_0 , \mu_1 )^2,
\]
which, according to Theorem \ref{equi-func}, comes from the reverse
Poincar\'e inequality
\[
| \nabla P f |^2 \le C (P(f^2) - (Pf)^2).
\]
  The original Kuwada duality proved in Corollary
  \ref{gradient-wasserstein-contraction} relating the transportation
  inequality
\[
W_2^2 ( \mu_0 P, \mu_1 P)^2 \le C W_2^2 ( \mu_0 , \mu_1 )^2
\]
to the gradient bound
\[
| \nabla P f |^2 \le C P(| \nabla f|^2)
\]
was already illustrated as a tool to prove convergence to equilibrium
in \cite{MR3677826}, so we will spend less time on it.  Also, our
examples will be finite dimensional, though applications could be
given in an infinite dimensional framework as in Section
\ref{quasi-sec}. In particular, applications to stochastic partial
differential equations might be the object of a future work.

\subsection{Diffusions with $\Gamma_2 \ge 0$}

In this section, as an illustration of our general results, we first show how to recover the results of \cite{luise-savare}.
Let $\Delta$ be a locally subelliptic diffusion operator (see Section
1.2 in \cite{baudoin2018geometric} for a definition of local
subellipticity) on a smooth manifold $M$. For smooth functions $f,g: M
\rightarrow \mathbb{R}$, we can define the \textit{carr\'e du champ}
operator as the symmetric first-order bilinear differential form given
by:
\begin{align}\label{carre du champ}
\Gamma (f,g) :=\frac{1}{2} \left( \Delta(fg)-f\Delta g-g\Delta f \right).
\end{align}
We write $\Gamma(f)$ for $\Gamma(f,f)$.  (When $\Delta$ is the
Laplacian on $\mathbb{R}^n$ or on a Riemannian manifold, we have
$\Gamma(f) = |\nabla f|^2$.)  We assume that $\Delta $ is symmetric
with respect to some smooth measure $\mu$ (not necessarily finite),
which means that for every pair of smooth and compactly supported
functions $f,g \in C_0^\infty(M)$,
\[
\int_{M} g \Delta f\, \d\mu= \int_{M} f \Delta g \,\d\mu.
\]

There is an intrinsic distance associated to the operator $\Delta$
that we now describe.  An absolutely continuous curve $\gamma: [0,T]
\rightarrow M$ is said to be subunit for the operator $L$ if for every
smooth function $f : M \to \mathbb{R}$ we have $ \left| \frac{\d}{\d
  t} f ( \gamma(t) ) \right| \le \sqrt{ (\Gamma f) (\gamma(t)) }$.  We
then define the subunit length of $\gamma$ as $\ell_s(\gamma) = T$.
Given $x, y\in M$, we indicate then with
\[
S(x,y) :=\{\gamma:[0,T]\to M\mid \gamma\ \text{is subunit
  for}\ \Gamma,\, \gamma(0) = x,\, \gamma(T) = y\}
\]
and assume that  $S(x,y) \not= \emptyset$ for every $x, y\in M$. For
instance, if $L$ is an elliptic operator or if $L$ is a sum of squares
operator that satisfies H\"ormander's condition, then this assumption
is satisfied. Under this assumption,
\begin{equation}\label{ds}
d(x,y) := \inf\{\ell_s(\gamma)\mid \gamma\in S(x,y)\}
\end{equation}
defines a distance on $M$ and $(M,d)$ is by construction a length
space. The carr\'e du champ operator yields a strong upper gradient
structure on $(M,d)$ and from Theorem 1.12 in
\cite{baudoin2018geometric} one has
\[
d(x,y)=\sup \left\{ |f(x) -f(y) | , f \in  C^\infty(M) , \| \Gamma(f) \|_\infty \le 1 \right\},\ \ \  \ x,y \in M.
\]
We assume that the metric space $(M,d)$ is complete. In that case,
from Propositions 1.20 and 1.21 in \cite{baudoin2018geometric}, the
operator $\Delta$ is essentially self-adjoint on $C_0^\infty(M)$. The
semigroup in $L^2(M,\mu)$ generated by $\Delta$ will be denoted by
$(P_t)_{t \ge 0}$. The Bakry $\Gamma_2$ operator is defined as

\[
\Gamma_2 (f,g) =\frac{1}{2} \left( \Delta(\Gamma(f,g))-\Gamma(f,\Delta
g)-\Gamma (g,\Delta f) \right), \quad f,g \in C^\infty(M).
\]

\begin{theorem}\label{BE Poinc}
Assume that for every $f \in C^\infty(M)$, $\Gamma_2(f,f) \ge
0$. Then, for every $\nu_1,\nu_2 \in \mathcal{P}_2(M)$ and $t>0$,
\[
\mathsf{He}_2( \nu_1 P_t , \nu_2 P_t)^2 \le \frac{1}{8t} W_2 ( \nu_1, \nu_2)^2.
\]
Therefore, if the invariant measure $\mu$ is a probability measure which belongs to $\mathcal{P}_2(M)$, then  for every $x \in M$ and $t>0$,
\[
\mathsf{He}_2( \delta_{x} P_t , \mu)^2 \le \frac{1}{8t} W_2 ( \delta_{x}, \mu)^2
\]
and when $t \to +\infty$, $\delta_{x} P_t$ converges to $\mu$ in total variation for every $x \in M$.
\end{theorem}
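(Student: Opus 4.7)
The plan is to reduce the theorem to Theorem \ref{equi-func} by establishing a reverse Poincar\'e inequality for $P_t$ with the explicit constant $C = \frac{1}{2t}$, which is the well-known consequence of the $\Gamma_2 \ge 0$ (i.e., $CD(0,\infty)$) condition.

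First, I would carry out the standard Bakry--\'Emery interpolation: for fixed $t>0$ and $f \in C_0^\infty(M)$, set
\[
\phi(s) := P_s\bigl((P_{t-s}f)^2\bigr), \qquad s \in [0,t].
\]
A direct computation using the definition of $\Gamma$ gives $\phi'(s) = 2 P_s(\Gamma(P_{t-s}f))$, and differentiating once more yields $\phi''(s) = 4 P_s(\Gamma_2(P_{t-s}f))$. The hypothesis $\Gamma_2 \ge 0$ forces $\phi'' \ge 0$, so $\phi'$ is nondecreasing on $[0,t]$ and hence $\phi'(0) \le \frac{1}{t}\int_0^t \phi'(s)\,\d s = \frac{1}{t}(\phi(t)-\phi(0))$. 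Rewriting, this gives the reverse Poincar\'e inequality
\[
\Gamma(P_t f) \le \frac{1}{2t}\bigl(P_t(f^2) - (P_t f)^2\bigr),
\]
first for smooth compactly supported $f$, and then for all $f \in \lip_b(M)$ by density and the fact that the carr\'e du champ $\Gamma(f) = |\nabla f|^2$ provides the strong upper gradient structure on $(M,d)$.

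Next, I would apply Theorem \ref{equi-func} directly with $C = \frac{1}{2t}$. The equivalence \eqref{RPI} $\Leftrightarrow$ \eqref{HKC} then gives
\[
\mathsf{He}_2(\nu_1 P_t, \nu_2 P_t)^2 \le \frac{C}{4} W_2(\nu_1,\nu_2)^2 = \frac{1}{8t} W_2(\nu_1,\nu_2)^2
\]
for all $\nu_1,\nu_2 \in \mathcal{P}_2(M)$, which is the first assertion. For the second assertion, I would take $\nu_1 = \delta_x$ and $\nu_2 = \mu$; since $\mu$ is invariant, $\mu P_t = \mu$, and the inequality becomes $\mathsf{He}_2(\delta_x P_t, \mu)^2 \le \frac{1}{8t} W_2(\delta_x, \mu)^2$. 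The assumption $\mu \in \mathcal{P}_2(M)$ gives $W_2(\delta_x, \mu)^2 = \int d(x,y)^2\,\d\mu(y) < \infty$, so the right-hand side tends to $0$ as $t \to \infty$. The total variation conclusion then follows from the equivalence between Hellinger and total variation convergence noted in Section \ref{setup-sec}.

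The only delicate point is the derivation of the reverse Poincar\'e inequality in the subelliptic (non-elliptic) setting: one must justify the differentiation under $P_s$ and the fact that $P_{t-s}f$ lies in a domain on which $\Gamma_2$ can be computed. Given essential self-adjointness on $C_0^\infty(M)$ and the subelliptic smoothing properties of $P_t$ invoked in Section 1.2 of \cite{baudoin2018geometric}, the computation goes through on smooth compactly supported functions and extends to $\lip_b(M)$ by an approximation argument; this is the only step requiring care, while the rest is a direct application of the equivalence in Theorem \ref{equi-func}.
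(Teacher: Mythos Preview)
Your proposal is correct and follows essentially the same route as the paper: the paper's proof simply cites Bakry--\'Emery calculus (specifically \cite[Proposition 3.3 (5)]{bakry-tata}) to obtain $\Gamma(P_t f) \le \frac{1}{2t}(P_t(f^2)-(P_tf)^2)$ and then invokes Theorem \ref{equi-func}. You have supplied the standard interpolation argument behind that citation and spelled out the invariant-measure and total-variation steps that the paper leaves implicit in the theorem statement, but the strategy is identical.
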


\begin{proof}
It follows from Bakry-\'Emery calculus (see for instance
\cite[Proposition 3.3 (5)]{bakry-tata}) that since $\Gamma_2 \ge 0$
one has the following gradient bound that holds for bounded and
Lipschitz functions $f$,
\[
 \Gamma(P_t f) \le \frac{1}{2t} (P_t(f^2)-(P_tf)^2), \quad t > 0
\]
which yields the conclusion thanks to Theorem \ref{equi-func}.
\end{proof}

\begin{example}
An example where the theorem applies is the case where $\Delta$ is the
Laplace--Beltrami operator on a complete Riemannian manifold of
non-negative Ricci curvature. In that
case, the invariant measure $\mu$ is the Riemannian volume measure, and
the assumption $\Gamma_2 \ge 0$ is equivalent to the condition that the
Ricci curvature of $M$ is non-negative.
\end{example}

\begin{remark} More generally, if $\Delta$ is taken to be the operator generated by
  the Cheeger energy as in \cite{luise-savare}, so that $\Gamma =
  \mathsf{Ch}$, then the hypothesis of Theorem \ref{BE Poinc}
  essentially asks for
  $X$ to be an $RCD(0, \infty)$ space, and the conclusion is included
  in \cite[Theorem 5.2]{luise-savare}.  Indeed, the $\Gamma_2 \ge K$
  condition was already the key idea of the results of
  \cite{luise-savare}.  Our purpose in stating Theorem \ref{BE Poinc}
  is 
  to draw attention
  to the consequence that $\delta_x P_t$ converges in total variation
  to its equilibrium measure, at a rate no slower than $1/\sqrt{t}$.
\end{remark}

\begin{remark}
If $\Gamma_2 \ge a $, then, Bakry--\'Emery calculus also  yields the gradient bound
\[
 \Gamma(P_t f) \le e^{-2at} P_t (\Gamma(f)).
\]
which therefore implies from Theorem \ref{poincare-func-ineq} the
following contraction property in the $W_2$ distance:
\[
W_2 ( \nu_1 P_t, \nu_2 P_t)^2\le e^{-2at} W_2 ( \nu_1,\nu_2)^2,
\]
This appears in  \cite{kuwada-duality} and \cite{St-V}.
\end{remark}

\subsection{Subelliptic operators}

The assumption $\Gamma_2 \ge 0$ requires some form of ellipticity of
$\Delta$. In order to generalize the previous theorem to truly
subelliptic operators, one can make use of the generalized
$\Gamma$-calculus developed in \cite{BaudoinGarofalo, BB}. In addition
to the carr\'e du champ form $\Gamma$ defined in \eqref{carre du
  champ}, we assume that $M$ is endowed with another smooth symmetric
bilinear differential form, indicated with $\Gamma^Z$, satisfying for
$f,g \in C^\infty(M)$
\[
\Gamma^Z(fg,h) = f\Gamma^Z(g,h) + g \Gamma^Z(f,h),
\] 
and $\Gamma^Z(f) = \Gamma^Z(f,f) \ge 0$.  Let us assume that:

\begin{itemize}
\item[(H.1)] There exists an increasing
sequence $h_k\in C^\infty_0(M)$   such that $h_k\nearrow 1$ on
$M$, and \[
\|\Gamma (h_k)\|_{\infty} +\|\Gamma^Z (h_k)\|_{\infty}  \to 0,\ \ \text{as} \ k\to \infty.
\]
\item[(H.2)]  
For any $f \in C^\infty(M)$ one has
\[
\Gamma(f, \Gamma^Z(f))=\Gamma^Z( f, \Gamma(f)).
\] 
 \end{itemize}
Let us then consider
\begin{equation}\label{gamma2Z}
\Gamma^Z_{2}(f,g) = \frac{1}{2}\big[\Delta \Gamma^Z (f,g) - \Gamma^Z(f,
\Delta g)-\Gamma^Z (g,\Delta f)\big].
\end{equation}
As for $\Gamma$ and $\Gamma^Z$, we will freely use the notations  $\Gamma_2(f) = \Gamma_2(f,f)$, $\Gamma_2^Z(f) = \Gamma^Z_2(f,f)$.

\begin{theorem}
Let $\rho_1 \ge 0, \rho_2 >0$ and $\kappa >0$. Assume that for every $f \in C^\infty(M)$ and $\nu>0$
\begin{align}\label{generalizedCD}
\Gamma_2(f)+\nu \Gamma_2^Z(f) \ge \left(
\rho_1-\frac{\kappa}{\nu}\right) \Gamma(f) +\rho_2 \Gamma^Z(f).
\end{align}
 Then,  for every $\nu_1,\nu_2 \in \mathcal{P}_2(M)$ and $t>0$,
\[
\mathsf{He}_2( \nu_1 P_t , \nu_2 P_t)^2 \le \frac{1}{8t}\left(
1+\frac{2\kappa}{\rho_2} \right) W_2 ( \nu_1, \nu_2)^2.
\]
Therefore, if the invariant measure $\mu$ is a probability measure which belongs to
$\mathcal{P}_2(M)$, then for every $x \in M$ and $t>0$,
\[
\mathsf{He}_2( \delta_{x} P_t , \mu)^2 \le \frac{1}{8t} \left(
1+\frac{2\kappa}{\rho_2} \right) W_2 ( \delta_{x}, \mu)^2
\]
and when $t \to +\infty$, $\delta_{x} P_t$ converges to $\mu$ in total
variation for every $x \in M$.
\end{theorem}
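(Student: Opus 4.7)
The plan is to reduce the claim to the reverse Poincaré inequality $\Gamma(P_t f) \le \frac{1}{2t}\bigl(1 + \frac{2\kappa}{\rho_2}\bigr)\bigl(P_t(f^2) - (P_t f)^2\bigr)$, after which Theorem \ref{equi-func} (applied with $C = \frac{1}{2t}(1+\frac{2\kappa}{\rho_2})$ and noting $\Gamma(g) = |\nabla g|^2$ in this setting) immediately gives the announced Hellinger--Kantorovich contraction with constant $C/4 = \frac{1}{8t}(1+\frac{2\kappa}{\rho_2})$. The statements for $\delta_x P_t$ then follow by taking $\mu_0 = \delta_x$, $\mu_1 = \mu$, noting $\mu P_t = \mu$, and letting $t \to \infty$ (convergence in Hellinger distance is equivalent to convergence in total variation, as recalled in Section \ref{setup-sec}).

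To obtain the reverse Poincaré inequality, I would run the generalized Bakry--Émery interpolation of \cite{BaudoinGarofalo,BB}. Fix $t > 0$, $x \in M$, and $f \in C_0^\infty(M)$, and set $g_s = P_{t-s} f$. Introduce
\[
\psi(s) = P_s(\Gamma(g_s))(x), \qquad \eta(s) = P_s(\Gamma^Z(g_s))(x),
\]
which by the commutation relation and standard manipulations (using hypotheses (H.1)--(H.2) to justify integration by parts and differentiation under $P_s$) satisfy
\[
\psi'(s) = 2 P_s(\Gamma_2(g_s))(x), \qquad \eta'(s) = 2 P_s(\Gamma_2^Z(g_s))(x).
\]
Also $\phi(s) := P_s(g_s^2)(x)$ satisfies $\phi'(s) = 2\psi(s)$, so $P_t(f^2)(x) - (P_tf)^2(x) = 2\int_0^t \psi(s)\,\d s$.

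The central step is to introduce the functional
\[
F(s) = \alpha(s)\,\psi(s) + \beta(s)\,\eta(s), \qquad \alpha(s) = \frac{t-s}{t}, \quad \beta(s) = \frac{\rho_2 (t-s)^2}{t},
\]
chosen precisely so that $\beta'(s) + 2\rho_2\alpha(s) = 0$ and $\alpha(s)^2/\beta(s) = 1/(\rho_2 t)$. Applying the hypothesis \eqref{generalizedCD} pointwise with the choice $\nu = \beta(s)/\alpha(s)$, then pushing through $P_s$ (which preserves positivity) and using $\rho_1,\alpha \ge 0$, one obtains
\[
F'(s) \ge \Bigl[\alpha'(s) + 2\alpha(s)\rho_1 - \frac{2\kappa\alpha(s)^2}{\beta(s)}\Bigr]\psi(s) + \bigl[\beta'(s) + 2\alpha(s)\rho_2\bigr]\eta(s) \ge -\frac{1}{t}\Bigl(1 + \frac{2\kappa}{\rho_2}\Bigr)\,\psi(s).
\]
Integrating over $[0,t]$, using $F(t) = 0$ (both $\alpha(t) = 0$ and $\beta(t) = 0$) and $F(0) = \Gamma(P_t f)(x) + \rho_2 t\, \Gamma^Z(P_t f)(x) \ge \Gamma(P_t f)(x)$ (since $\Gamma^Z \ge 0$), and using the formula $\int_0^t \psi\,\d s = \tfrac12 (P_t(f^2) - (P_tf)^2)$, gives the desired reverse Poincaré inequality.

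The main obstacle is the analytic justification of the interpolation for general $f \in \lip_b(M)$ rather than smooth compactly supported $f$: one needs enough regularity of $P_t f$ and commutations between $P_s$, $\Gamma$, $\Gamma^Z$ and their derivatives to make sense of $\psi,\eta$ and compute $\psi',\eta'$. This is precisely the role of hypotheses (H.1)--(H.2) and the essential self-adjointness framework of \cite{baudoin2018geometric,BaudoinGarofalo,BB}: (H.1) provides the cutoff sequence needed for integration by parts without boundary terms, and (H.2) ensures the relevant interchange of $\Gamma$ and $\Gamma^Z$. Once the inequality is established for smooth $f$, one extends it to $\lip_b(M)$ by the standard density/approximation argument (e.g.\ regularizing via $P_\varepsilon$ and letting $\varepsilon \to 0$), at which point Theorem \ref{equi-func} delivers the Hellinger--Kantorovich contraction and the convergence to equilibrium follows.
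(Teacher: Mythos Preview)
Your proposal is correct and follows exactly the same strategy as the paper: establish the reverse Poincar\'e inequality $\Gamma(P_t f) \le \frac{1}{2t}(1+\frac{2\kappa}{\rho_2})(P_t(f^2)-(P_tf)^2)$ and then invoke Theorem~\ref{equi-func}. The only difference is that the paper simply cites Proposition~3.2 of \cite{BB} for this inequality, whereas you reproduce its proof via the interpolation functional $F(s)=\alpha(s)\psi(s)+\beta(s)\eta(s)$; your computation is correct and matches the argument in \cite{BB}.
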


\begin{proof}
It follows from Proposition 3.2 in \cite{BB} that
 \[
 \Gamma(P_t f) \le \frac{1}{2t} \left( 1+\frac{2\kappa}{\rho_2}
 \right) (P_t(f^2) - (P_t f)^2)
 \]
and thus the conclusion follows from Theorem \ref{equi-func}.
\end{proof}

\begin{example}
An example where this theorem applies is the case where $\Delta$ is
the sub-Laplacian  operator on a compact H-type sub-Riemannian
manifold, see \cite{BGMR}. In that case, the invariant measure $\mu$
is again the Riemannian volume measure and the assumption
\eqref{generalizedCD} is equivalent to the fact that the horizontal
Ricci curvature of $M$ is non-negative. This applies for instance to
the sub-Laplacian on the special unitary group $\mathrm{SU}(2)$, as
well as to compact quotients of the Heisenberg group $\mathbb{H}^3$.
\end{example}

\subsection{Non symmetric Ornstein--Uhlenbeck semigroups on Carnot groups}

In this section, we show that the method also applies to hypoelliptic
and non-symmetric diffusion operators. In particular we prove a
quantitative rate of convergence for the non-symmetric
Ornstein--Uhlenbeck semigroup on a Carnot group.

A Carnot group of step (or depth) $N$ is a simply connected Lie
group $\mathbb{G}$ whose Lie algebra can be written
\[
\mathfrak{g}=\mathcal{V}_{1}\oplus...\oplus \mathcal{V}_{N},
\]
where
\[
\lbrack \mathcal{V}_{i},\mathcal{V}_{j}]=\mathcal{V}_{i+j}
\]
and
\[
\mathcal{V}_{s}=0,\text{ for }s>N.
\]
From the above properties, it is of course seen that Carnot groups are nilpotent.
The number
\[
\mathfrak D=\sum_{i=1}^N i \dim \mathcal{V}_{i}
\]
is called the homogeneous dimension of $\mathbb{G}$.  On
$\mathfrak{g}$ we can consider the family of linear operators which
act by scalar multiplication $t^{i}$ on $\mathcal{V}_{i} $. These
operators are Lie algebra automorphisms, due to the grading, and
induce Lie group automorphisms $\Delta_t :\mathbb{G} \rightarrow
\mathbb{G}$ which are called the canonical dilations of
$\mathbb{G}$. It is easily seen that there exists on $\mathbb{G}$ a
complete and smooth vector field $D$ such that
\[
\Delta_t =e^{(\ln t) D}.
\]
This vector field $D$ is called the dilation vector field on $\mathbb{G}$. If $X$ is a left (or right) invariant smooth horizontal vector field on $\mathbb{G}$, we have for every $f \in C^\infty(\mathbb{G})$, and $t \ge 0$,
\[
X(f \circ \Delta_t)=t Xf \circ \Delta_t.
\]

\

Let us now pick a basis $V_1,...,V_d$ of the vector
space $\mathcal{V}_1$. The vectors $V_i$ can be seen as left
invariant vector fields on $\mathbb{G}$. In the sequel, these vector fields shall still be denoted by $V_1,...,V_d$. The left invariant sub-Laplacian on $\mathbb{G}$ is the operator:
\[
\sum_{i=1}^d V_i^2.
\]
It is essentially self-adjoint on the space of smooth and compactly
supported functions with respect to the Haar measure $\mu$ of
$\mathbb{G}$. The heat semigroup $(P_t)_{t\ge 0}$ on $\mathbb{G}$
generated by the sub-Laplacian, defined through the spectral theorem,
is then a Markov semigroup.

There are two different operators on $\mathbb{G}$ which are both
commonly referred to as Ornstein--Uhlenbeck operators; see
\cite{lust-piquard-ou} for a thorough comparison of the two types and
their properties.  We are interested here in the \emph{non-symmetric}
Ornstein Uhlenbeck operator defined by
\[
L=\sum_{i=1}^d V_i^2 -\alpha D
\]
where $\alpha >0$. This operator generates a Markov semigroup
$(Q_t)_{t \ge 0}$ which is given by the Mehler formula
\[
Q_t f= P_{\frac{1-e^{-\alpha t}}{\alpha}}  ( f \circ \Delta_{e^{-\alpha t}} ), \quad t \ge 0.
\]
It is clear that the probability measure $\delta_{\mathbf{e}}
P_{1/\alpha}$ is invariant by $Q_t$ where $\mathbf{e}$ denotes the
identity element in $\mathbb G$. Note that $\delta_{\mathbf{e}}
P_{1/\alpha}$ is the heat kernel measure started from $\mathbf{e}$ in
$\mathbb{G}$.  From known heat kernel estimates in Carnot groups (see
\cite{varopoulos}), one easily sees that the invariant measure
$\delta_{\mathbf{e}} P_{1/\alpha} \in \mathcal{P}_2(\mathbb G )$. The
next theorem proves exponentially fast convergence to equilibrium for
$Q_t$ with a quantitative rate.

\begin{theorem}
For every $ x\in \mathbb{G}$ and $t > 0$,
\[
\mathsf{He}_2 ( \delta_x Q_t , \delta_{\mathbf{e}} P_{1/\alpha})^2 \le \frac{\mathfrak D \alpha e^{-2\alpha t}}{2(1-e^{-\alpha t})} W_2( \delta_x  , \delta_{\mathbf{e}} P_{1/\alpha})^2.
\]
\end{theorem}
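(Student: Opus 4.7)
The plan is to apply Theorem \ref{equi-func} to the Markov semigroup $Q_t$. This reduces matters to establishing the reverse Poincar\'e inequality
\[
|\nabla Q_t f|^2 \le \frac{2\mathfrak{D}\alpha e^{-2\alpha t}}{1-e^{-\alpha t}}\bigl(Q_t(f^2) - (Q_t f)^2\bigr), \qquad f \in \lip_b(\mathbb{G}),
\]
for each $t > 0$, from which Theorem \ref{equi-func} yields the contraction $\mathsf{He}_2(\mu_0 Q_t, \mu_1 Q_t)^2 \le \frac{\mathfrak{D}\alpha e^{-2\alpha t}}{2(1-e^{-\alpha t})} W_2(\mu_0, \mu_1)^2$. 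Specializing $\mu_0 = \delta_x$ and $\mu_1 = \delta_{\mathbf{e}} P_{1/\alpha}$ and using the invariance $(\delta_{\mathbf{e}} P_{1/\alpha}) Q_t = \delta_{\mathbf{e}} P_{1/\alpha}$ recorded above the statement will give the desired bound.

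To prove the reverse Poincar\'e for $Q_t$, I would exploit the Mehler formula together with the scaling of the sub-Laplacian and the horizontal gradient under Carnot dilations. Since each $V_i$ lies in the first stratum, $V_i(h \circ \Delta_r) = r(V_i h) \circ \Delta_r$, and hence $L_0(h \circ \Delta_r) = r^2(L_0 h) \circ \Delta_r$, which lifts to the semigroup as $P_u(h \circ \Delta_r) = (P_{r^2 u} h) \circ \Delta_r$. This lets me rewrite the Mehler formula as
\[
Q_t f = (P_{\tau(t)} f) \circ \Delta_{e^{-\alpha t}}, \qquad \tau(t) = \frac{e^{-2\alpha t}(1-e^{-\alpha t})}{\alpha},
\]
and applying the horizontal scaling once more gives $|\nabla Q_t f|^2(x) = e^{-2\alpha t}\,|\nabla P_{\tau(t)} f|^2(\Delta_{e^{-\alpha t}} x)$. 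A reverse Poincar\'e inequality for the sub-Laplacian heat semigroup on $\mathbb{G}$, applied to $P_{\tau(t)} f$ and combined with the identity $(P_{\tau(t)} h) \circ \Delta_{e^{-\alpha t}} = Q_t h$, should then produce the inequality above.

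The main obstacle is arranging matters so that all the scaling factors conspire to yield exactly the constant $\frac{2\mathfrak{D}\alpha e^{-2\alpha t}}{1-e^{-\alpha t}}$: the homogeneous dimension $\mathfrak{D}$ must enter through a sub-Laplacian reverse Poincar\'e on $\mathbb{G}$ of precisely the right form, obtained for instance via integration by parts against the heat kernel on the Carnot group. An alternative that may be cleaner for bookkeeping is to perform a direct $\Gamma_2$-computation for the operator $L = \sum V_i^2 - \alpha D$: the commutation $[D, V_i] = -V_i$ in the first stratum gives the clean identity $\Gamma_2^L = \Gamma_2^{L_0} + \alpha \Gamma$, so the mean-reverting drift $-\alpha D$ contributes an extra $\alpha \Gamma$, and a Bakry--\'Emery type interpolation along the $Q_t$-semigroup then supplies the exponential factor $e^{-2\alpha t}$. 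Either route culminates in an application of Theorem \ref{equi-func}.
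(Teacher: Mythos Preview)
Your main route is exactly the paper's: use the Mehler formula, pull the horizontal gradient through the dilation to pick up the factor $e^{-\alpha t}$, invoke the sub-Laplacian reverse Poincar\'e inequality $|\nabla_{\mathcal H}P_t f|^2\le \frac{\mathfrak D}{2t}(P_t(f^2)-(P_t f)^2)$ on Carnot groups (this is the result you allude to ``via integration by parts against the heat kernel''; the paper simply cites \cite{BB2}), and conclude with Theorem~\ref{equi-func}. The paper keeps the dilation inside $P$ and lands on the weak form $|\nabla_{\mathcal H}Q_t f|^2\le C\,Q_t(f^2)$, whereas you move the dilation outside and aim for the full RPI, but this is purely cosmetic.

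One caution about your proposed alternative. The identity $\Gamma_2^L=\Gamma_2^{L_0}+\alpha\Gamma$ is correct (it follows from $[D,V_i]=-V_i$ exactly as you say), but on a genuine step~$\ge 2$ Carnot group the sub-Laplacian does \emph{not} satisfy $\Gamma_2^{L_0}\ge 0$; the bracket directions produce negative contributions. So the extra $\alpha\Gamma$ does not by itself yield a usable $CD(\alpha,\infty)$ bound, and the standard Bakry--\'Emery interpolation along $Q_t$ will not close. To make a $\Gamma_2$ argument work here you would still need the generalized curvature machinery with an auxiliary form $\Gamma^Z$, which effectively brings you back to the \cite{BB2} input used in the first route. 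Stick with the Mehler computation.
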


\begin{proof}
We denote by $\nabla_{\mathcal H}$ the horizontal gradient on $\mathbb G$ given by
\[
\nabla_\mathcal{H} f=\sum_{i=1}^d (V_i f) V_i.
\]
The following reverse Poincar\'e inequality was proved in \cite{BB2}:
\[
| \nabla_\mathcal{H} P_t f |^2 \le \frac{\mathfrak D}{2t}
(P_t(f^2)-(P_t f)^2).
\]
Since $Q_t f= P_{\frac{1-e^{-\alpha t}}{\alpha}}  ( f \circ \Delta_{e^{-\alpha t}} )$, one has
\[
\nabla_\mathcal{H} Q_t f =e^{-\alpha t}  \nabla_\mathcal{H}  P_{\frac{1-e^{-\alpha t}}{\alpha}}  ( f \circ \Delta_{e^{-\alpha t}} ).
\]
Thus,
\[
| \nabla_\mathcal{H} Q_t f |^2 \le \frac{\mathfrak D \alpha
  e^{-2\alpha t}}{2(1-e^{-\alpha t})}P_{\frac{1-e^{-\alpha
      t}}{\alpha}} ( (f \circ \Delta_{e^{-\alpha t}})^2
)=\frac{\mathfrak D \alpha e^{-2\alpha t}}{2(1-e^{-\alpha t})}
Q_t(f^2)
\]
and the conclusion follows as before from Theorem \ref{equi-func}.
\end{proof}

\begin{remark}
The above proof and \cite{BB2} show that if $\mathbb{G}$ is an H-type
group, then the constant $\frac{\mathfrak D \alpha e^{-2\alpha
    t}}{2(1-e^{-\alpha t})}$ can be improved into $\frac{\mathfrak D
  \alpha e^{-2\alpha t}}{2d(1-e^{-\alpha t})}$.
\end{remark}

\subsection{Langevin type dynamics driven by L\'evy processes}

In this subsection, we work in the space $X = \mathbb{R}^n$ with its
usual Euclidean distance and gradient.

Let $(N_t)_{t \ge 0}$ be a L\'evy process in $\mathbb R^n$, i.e. a
c\`adl\`ag stochastic process with stationary and independent
increments. We assume that $N_0=0$ a.s. and that for every $T>0$,
$\mathbb{E} \left( \sup_{t\in [0,T]} | N_t |^2 \right) <+\infty$. In
$\mathbb{R}^n$, we consider the following stochastic differential
equation with additive noise:
\begin{align}\label{sde-frac}
dX^x_t=-\nabla U(X^x_t) dt +dN_t, \quad X^x_0=x \in \mathbb R^n,
\end{align}
where $U:\mathbb{R}^n \to \mathbb{R}$ is a $C^2$ function. For
simplicity, we assume that $\nabla U$ is a Lipschitz function, so that
it is easily proved that \eqref{sde-frac} has a unique solution for
any $x \in \mathbb{R}^n$ which moreover satisfies for every $T>0$,
$\mathbb{E} \left( \sup_{t\in [0,T]} | X_t^x |^2 \right)
<+\infty$. For $t \ge 0$, we denote by $P_t$ the Markov kernel defined
by
\[
P_t f(x) =\mathbb{E}( f(X_t^x) ),
\]
so that $P_t (x,A)=\mathbb{P} (X_t^x \in A)$. It is a contraction
semigroup in $L^\infty(\mathbb{R}^n)$, and from the square
integrability we have that for every $\mu \in \mathcal{P}_2(\mathbb
R^n)$ and $t \ge 0$, $\mu P_t \in \mathcal{P}_2(\mathbb R^n)$.

\subsubsection{Convergence to equilibrium in the Kantorovich--Wasserstein distance}

Let $\nabla^2 U$ denote the Hessian of $U$.

\begin{theorem}
Assume that there exists $a>0$ such that $\nabla^2 U \ge a$ (uniformly
in the sense of quadratic forms). Then, there exists a unique
probability measure $\mu$ in the Wasserstein space
$\mathcal{P}_2(\mathbb R^n)$ such that for every $t \ge 0$, $\mu P_t =
\mu$. Moreover, for every $t \ge 0$, and $\nu \in
\mathcal{P}_2(\mathbb R^n)$ one has,
\[
W_2 ( \nu P_t, \mu )^2 \le e^{-2at}  W_2 ( \nu , \mu )^2.
\]

\end{theorem}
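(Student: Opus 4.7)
The plan is to obtain the $W_2$ contraction from a synchronous coupling and then deduce the existence and uniqueness of the invariant measure from a Banach fixed-point argument in $(\mathcal{P}_2(\mathbb{R}^n),W_2)$.

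First, I would run two copies $(X_t^x)$ and $(X_t^y)$ of \eqref{sde-frac} driven by the \emph{same} Lévy process $N_t$ but started from $x$ and $y$. Because the noise is additive, it cancels in the difference, so $Y_t := X_t^x - X_t^y$ satisfies the pathwise ODE
\[
\frac{d}{dt} Y_t = -\bigl(\nabla U(X_t^x) - \nabla U(X_t^y)\bigr).
\]
The convexity assumption $\nabla^2 U \ge a$ gives $\frac{d}{dt}|Y_t|^2 \le -2a|Y_t|^2$, whence by Grönwall $|X_t^x - X_t^y|^2 \le e^{-2at}|x-y|^2$ pathwise. Taking an optimal $W_2$-coupling $\pi$ of two measures $\nu_0, \nu_1 \in \mathcal{P}_2(\mathbb{R}^n)$ and running the joint flow $(X_t^x, X_t^y)$ under $\pi$ with shared noise produces a coupling of $\nu_0 P_t$ and $\nu_1 P_t$; integrating the pathwise bound gives
\[
W_2(\nu_0 P_t, \nu_1 P_t)^2 \le e^{-2at}\, W_2(\nu_0, \nu_1)^2.
\]

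For existence and uniqueness of $\mu$, I would fix any $t_0 > 0$. The hypotheses on $N_t$ and the Lipschitz regularity of $\nabla U$ ensure that $\nu \mapsto \nu P_{t_0}$ sends $\mathcal{P}_2(\mathbb{R}^n)$ into itself, and the contraction above shows it is a strict contraction with factor $e^{-at_0}<1$ on the complete metric space $(\mathcal{P}_2(\mathbb{R}^n), W_2)$. Banach's fixed-point theorem then supplies a unique fixed point $\mu$. Since $P_t$ commutes with $P_{t_0}$, the measure $\mu P_t$ satisfies $(\mu P_t) P_{t_0} = \mu P_{t_0} P_t = \mu P_t$ and hence equals $\mu$ by uniqueness; in particular $\mu$ is invariant under the whole semigroup, and any other invariant probability in $\mathcal{P}_2$ would be fixed by $P_{t_0}$ and coincide with $\mu$. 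The stated rate follows by writing $\mu = \mu P_t$ and applying the contraction to $\nu$ and $\mu$.

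I do not anticipate a serious obstacle. The one point to navigate carefully is that $N_t$ is a Lévy process with jumps rather than a Brownian motion, but synchronous coupling makes the jump structure entirely irrelevant since $Y_t$ satisfies a pathwise ODE; the only properties of $N_t$ actually used are square integrability of $\sup_{t\le T}|N_t|$ (to stay in $\mathcal{P}_2$) and well-posedness of the SDE (guaranteed by the Lipschitz assumption on $\nabla U$). An essentially equivalent route would be to extract a pointwise gradient bound for $P_t$ on Lipschitz test functions from the coupling and invoke Corollary \ref{gradient-wasserstein-contraction}, but running the coupling directly at the level of measures is more transparent here.
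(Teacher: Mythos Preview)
Your argument is correct, and Steps 2 and 3 (the Banach fixed-point argument in $(\mathcal{P}_2,W_2)$ and the conclusion) match the paper's proof essentially word for word.

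The genuine difference is in Step 1. The paper obtains the $W_2$ contraction \emph{indirectly}: it differentiates the flow map to get the first variation process $J_t = \partial X_t^x/\partial x$, writes $\nabla P_t f(x) = \mathbb{E}[J_t^* \nabla f(X_t^x)]$, bounds $|J_t^*| \le e^{-at}$ from $\nabla^2 U \ge a$, and thus derives the pointwise gradient estimate $|\nabla P_t f|^2 \le e^{-2at} P_t(|\nabla f|^2)$. It then invokes the Kuwada duality of Corollary~\ref{gradient-wasserstein-contraction} to convert this into the $W_2$ contraction. Your synchronous-coupling route bypasses both the gradient bound and the duality, getting the pathwise estimate $|X_t^x - X_t^y| \le e^{-at}|x-y|$ directly and integrating it against an optimal coupling. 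Your approach is more elementary and self-contained; the paper's approach is chosen because the whole article is about the equivalence between functional inequalities and transport contractions, so passing through the gradient bound and Corollary~\ref{gradient-wasserstein-contraction} illustrates that machinery. You correctly flag this alternative at the end of your proposal, so you are aware of both routes.
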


\begin{proof}
We proceed in several steps. 

\

\underline{\textbf{Step 1}}: \textit{Proving the Bakry--\'Emery type estimate}. 

\
 Let $J_t=\frac{\partial X_t^x}{\partial x}$ be the first variation process associated with equation \eqref{sde-frac}.
Since $P_tf(x)=\mathbb{E}( f(X_t^x))$, by the chain rule we have
\[
\nabla P_t f (x)=\mathbb{E}\left( J_t^*  \nabla f(X_t^x)\right).
\]
Therefore, by the Cauchy--Schwarz inequality,
\[
| \nabla P_t f (x) |^2 \le \mathbb{E}\left( | J_t^*|^2\right)
\mathbb{E}\left( | \nabla f(X_t^x) |^2\right).
\]
Since $\mathbb{E}\left( | \nabla f(X_t^x) |^2\right)=P_t (| \nabla f
|^2)(x)$, we are left to estimate $\mathbb{E}\left( |
J_t^*|^2\right)$. To this end, we observe that
\begin{align}\label{sde-J}
dJ_t=-\nabla^2 U(X^x_t) J_t dt,  \quad J_0=\mathbf{Id}_{\mathbb R^n}.
\end{align}
From the assumption $\nabla^2 U \ge a$ this yields
\[
| J_t^*|^2 \le e^{-2at }.
\]
One concludes $\mathbb{E}\left( | J_t^*|^2\right) \le e^{-2at }$ and therefore
\[
| \nabla P_t f (x) |^2 \le e^{-2at} P_t (| \nabla f |^2)(x).
\]
By Kuwada duality (Corollary \ref{gradient-wasserstein-contraction}),
this yields that for every $\nu_0,\nu_1 \in \mathcal{P}_2(\mathbb
R^n)$,
\begin{align}\label{contraction wasserstein}
W_2 ( \nu_0 P_t, \nu_1 P_t )^2 \le e^{-2at}  W_2 ( \nu_0 , \nu_1 )^2.
\end{align}

\

\underline{\textbf{Step 2}}: \textit{Proving the existence and uniqueness of the invariant measure}. 

\

Let $t >0$. Thanks to \eqref{contraction wasserstein}, the map $\nu
\to \nu P_t$ is a contraction from $\mathcal{P}_2(\mathbb R^n)$ into
itself. Since $\mathcal{P}_2(\mathbb R^n)$ is a complete metric space,
one deduces that it admits a unique fixed point; call it $\mu_t$. We
have then for every $t>0$ that $\mu_t P_t =\mu_t$. Composing with
$P_s$ yields $\mu_t P_t P_s =\mu_t P_s$. Since $P_t$ is a semigroup,
one has $P_t P_s=P_sP_t$. Therefore, $\mu_t P_s P_t=\mu_t P_s$ which
means that $\mu_t P_s $ is invariant for $P_t$. By uniqueness this
implies $\mu_t P_s=\mu_t$. Using now the uniqueness of the invariant
measure for $P_s$ yields $\mu_t=\mu_s$. As a conclusion, $\mu_t$ is
independent of $t$. We can call it $\mu$.

\

\underline{\textbf{Step 3}}: \textit{Concluding}. 

\

Using \eqref{contraction wasserstein} with $\nu_0=\nu$ and $\nu_1=\mu$ yields the expected result. 
\end{proof}

\subsubsection{Convergence to equilibrium in the Hellinger distance}

Our next application shows that in the diffusion case one can prove
convergence to equilibrium in the Langevin dynamics without assuming
coercivity of the Hessian of the potential (i.e. $\nabla^2 U \ge a
>0$). The price to pay is a convergence speed which is not exponential
but polynomial. We now assume that $(N_t)_{t \ge 0}$ is a Brownian
motion in $\mathbb R^n$. In that case, the invariant measure of
\eqref{sde-frac} is known explicitly, and is given up to a possible
normalization constant by $e^{-U(x)}dx$.

\begin{theorem}\label{BEL formula}
Assume that the normalized invariant measure $d\mu=\frac{1}{Z}
e^{-U(x)} dx$ is a probability measure with a finite second moment and
that $\nabla^2 U \ge 0$ ($U$ convex). Then, for every $x \in
\mathbb{R}^n$
\[
\mathsf{He}_2( \delta_{x} P_t , \mu)^2 \le \frac{1}{4t} W_2 ( \delta_{x}, \mu)^2.
\]
In particular, $X^x_t$ converges in total variation to $\mu$ when $t \to +\infty$.
\end{theorem}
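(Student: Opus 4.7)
The plan is to reduce the statement to Theorem \ref{equi-func} by establishing a reverse Poincar\'e inequality of the form \eqref{RPI} for the Langevin semigroup $(P_t)$ under the convexity hypothesis.

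First, I would identify the setting. Since $(N_t)$ is Brownian motion, $(P_t)$ is the overdamped Langevin semigroup, a symmetric diffusion whose generator is a scalar multiple of $\Delta - \nabla U \cdot \nabla$ and whose invariant measure, by a direct integration by parts, is the Gibbs measure $\mu = Z^{-1}e^{-U}\,\d x$; in particular $\mu P_t = \mu$ for every $t \ge 0$. The associated carr\'e du champ $\Gamma$ is proportional to $|\nabla f|^2$, and a standard Bochner-type computation gives
\[
\Gamma_2(f) \;=\; c\bigl(\|\Hess f\|_{HS}^2 + \langle (\nabla^2 U)\nabla f, \nabla f\rangle\bigr),
\]
so that the hypothesis $\nabla^2 U \ge 0$ is exactly the Bakry--\'Emery curvature-dimension condition $CD(0,\infty)$.

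Second, I would invoke the classical reverse Poincar\'e inequality from Bakry--\'Emery calculus already used in the proof of Theorem \ref{BE Poinc}: under $\Gamma_2 \ge 0$ one has
\[
\Gamma(P_t f) \;\le\; \frac{1}{2t}\bigl(P_t(f^2) - (P_t f)^2\bigr),
\]
which, after unpacking $\Gamma$ into $|\nabla \cdot|^2$ with the Brownian-motion normalization, yields a reverse Poincar\'e inequality \eqref{RPI} with constant $C$ of order $1/t$. Theorem \ref{equi-func} then gives the transportation contraction
\[
\mathsf{He}_2(\nu_0 P_t, \nu_1 P_t)^2 \;\le\; \tfrac{C}{4}\,W_2(\nu_0, \nu_1)^2, \qquad \nu_0, \nu_1 \in \mathcal{P}_2(\mathbb{R}^n),
\]
and specializing to $\nu_0 = \delta_x$, $\nu_1 = \mu$, together with $\mu P_t = \mu$, produces the stated bound.

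Finally, for the total-variation convergence I would use that the finite second-moment hypothesis on $\mu$ gives $W_2(\delta_x, \mu) < \infty$ for every $x \in \mathbb{R}^n$, so the right-hand side tends to $0$ as $t \to \infty$; since Hellinger convergence coincides with convergence in total variation (recalled in Section \ref{setup-sec}), the last assertion follows. I do not expect any serious obstacle: the only delicate piece of bookkeeping is fixing the normalization of $\Gamma$ (equivalently, of the Brownian motion $N_t$) consistently with $\mu = Z^{-1}e^{-U}\d x$, which is precisely what determines the numerical coefficient in the final bound.
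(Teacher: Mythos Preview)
Your proposal is correct, but it follows a different route from the paper's own proof. The paper establishes the reverse Poincar\'e inequality via the Bismut--Elworthy--Li formula: writing
\[
\langle \nabla P_t f(x), v\rangle = \frac{1}{t}\,\mathbb{E}\Bigl[f(X_t^x)\int_0^t (J_s v)\,\d N_s\Bigr],
\]
applying Cauchy--Schwarz and the It\^o isometry, and using $|J_s|\le 1$ from the convexity of $U$, to obtain $|\nabla P_t f|^2 \le \frac{1}{t} P_t(f^2)$; this is the weak form \eqref{RPI-weak}, which self-improves to \eqref{RPI} with $C=1/t$, and Theorem \ref{equi-func} finishes the argument exactly as you describe. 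Your approach instead reads off $\Gamma_2 \ge 0$ from the Bochner formula and quotes the reverse Poincar\'e inequality from Bakry--\'Emery calculus, i.e.\ essentially invokes Theorem \ref{BE Poinc}. The paper explicitly notes (in the remark following this theorem) that your route works too; its reason for presenting the BEL argument is pedagogical, to illustrate a probabilistic tool for producing reverse Poincar\'e inequalities that does not rely on the $\Gamma$-calculus machinery. What your approach buys is brevity and a clean reduction to an already-proved theorem; what the BEL route buys is a self-contained derivation of the gradient bound directly from the stochastic representation, which generalizes more readily to situations where $\Gamma_2$ is unavailable or awkward. Your caveat about normalization is apt: with the Brownian convention the paper uses in its BEL computation one has $\Gamma(f)=\tfrac{1}{2}|\nabla f|^2$, and the Bakry--\'Emery bound $\Gamma(P_tf)\le \frac{1}{2t}(P_t(f^2)-(P_tf)^2)$ then yields exactly $C=1/t$, matching the stated constant.
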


\begin{proof}
From the Bismut--Elworthy--Li formula
\cite{bismut-large-deviations,elworthy-li-1994}, we have for every $v
\in \mathbb{R}^n$
\[
\langle \nabla P_t f (x) , v \rangle=\frac{1}{t} \mathbb{E} \left(
f(X^x_t) \int_0^t (J_s v) dN_s \right),
\]
where, as before, $J_t=\frac{\partial X_t^x}{\partial x}$ is the first
variation process associated with equation \eqref{sde-frac}. From
the Cauchy--Schwarz inequality, and the fact that $\nabla^2 U \ge 0$
implies $|J_t| \le 1$ a.s., one has
\begin{align*}
\mathbb{E} \left( f(X^x_t) \int_0^t (J_s v) dN_s \right)^2 & \le
\mathbb{E} \left( f(X^x_t)^2 \right) \mathbb{E} \left( \left(\int_0^t
(J_s v) dN_s \right)^2\right) \\ & \le \mathbb{E} \left( f(X^x_t)^2
\right) \mathbb{E} \left(\int_0^t | J_s v |^2 ds \right) \\ &\le t | v
|^2 \mathbb{E} \left( f(X^x_t)^2 \right) =t |v|^2 P_t (f^2)(x).
\end{align*}
One concludes that for every $v \in \mathbb{R}^n$,
\[
\langle \nabla P_t f (x) , v \rangle^2=\frac{1}{t}  |v|^2 P_t (f^2)(x).
\]
This yields
\[
| \nabla P_t f (x) |^2 \le \frac{1}{t}   P_t (f^2)(x)
\]
which is of the form \eqref{RPI-weak}.  As noted before, this
self-improves to \eqref{RPI} and thus we have the expected result by Theorem \ref{equi-func}.
\end{proof}

\begin{remark}
Theorem \ref{BEL formula} might also be proven using Theorem \ref{BE
  Poinc} above.  However, we wanted to illustrate the use of the
Bismut--Elworthy--Li formula as a tool to prove reverse Poincar\'e
inequalities.
\end{remark}

\section{R\'enyi-type divergences and functional inequalities}\label{renyi-sec}

\subsection{The dynamic dual formulation and basic properties}\label{T-def-sec}

The notions discussed in the previous section can be modified to give
a dynamic dual formulation of a family of ``entropic'' divergences on
$\mathcal{P}(X) \times \mathcal{P}(X)$, which we will denote by
$T_{a,b}$.  In the same way that the (squared) distances $W_{a,b}$ included the
Hellinger distance, the $T_{a,b}$ family will include the R\'enyi
divergence (in a different normalization); and where contractions of
$W_{a,b}$ were equivalent to reverse Poincar\'e inequalities, we will
show (Theorem \ref{entropic-equivalence}) that contractions of
$T_{a,b}$ are equivalent to reverse logarithmic Sobolev inequalities,
Wang-type Harnack inequalities, and integrated Harnack inequalities.

\begin{definition}\label{T-def}
  Let $a,b \ge 0$.  We denote by $\mathcal{E}_{a,b}$ the class of all
  \emph{positive}   functions $\varphi \in C^1([0,1], \lip_b(X))$,
  bounded and bounded away from $0$, satisfying the
  differential inequality
  \begin{equation}
    \partial_s \varphi_s + a \varphi_s |\nabla \ln \varphi_s|^2 + b
    \varphi_s \ln \varphi_s \le 0.
  \end{equation}
  Then for probability measures $\mu_1, \mu_2 \in \mathcal{P}(X)$ we set
  \begin{equation}\label{T-sup}
    T_{a,b}(\mu_0, \mu_1) = \sup_{\varphi \in \mathcal{E}_{a,b}}
      \left[\int_X \varphi_1\,\d\mu_1 - \int_X \varphi_0\,\d\mu_0\right]
  \end{equation}
\end{definition}

(We note here a slight abuse of terminology. The functions $T_{a,b}$
as defined above do not actually satisfy the definition of a
statistical divergence, but they have renormalized versions
$\widetilde{T}_{a,b}$, defined in \eqref{Ttilde-def} below, which are
divergences as shown in Proposition \ref{T-divergence}.  However, it
will be simpler in most cases to work with $T_{a,b}$ than with
$\widetilde{T}_{a,b}$, and we will continue to use the term ``divergence''
for either of the two when no confusion will result.)

\begin{remark}
  By writing $\varphi_s = e^{\psi_s}$, we could formulate Definition
  \ref{T-def} instead as
  \begin{equation*}
    T_{a,b}(\mu_0, \mu_1) = \sup\left\{ \int_X e^{\psi_1} \,\d\mu_1 -
    \int_X e^{\psi_0}\,\d\mu_0  : \partial_s \psi_x + a |\nabla
    \psi_s|^2 + b \psi_s \le 0\right\}.
  \end{equation*}
  In this notation the relevant Hamilon--Jacobi differential inequality more closely
  resembles \eqref{W-diff-ineq}, with a $0$th order term which is now
  linear instead of quadratic.  However, for comparison to functional
  inequalities as in Theorem \ref{entropic-func-ineq} below, the
  original formulation of Definition \ref{T-def} will be more convenient.
\end{remark}

\begin{notation}\label{pq-notation}
  For $b>0$, let $p = e^b$, let $q = p/(p-1)$ be the conjugate
  exponent of $p$, and set
  \begin{equation}
    C_b := \frac{1}{q} p^{-q/p} = \frac{1}{q} p^{1-q}.
  \end{equation}
\end{notation}

We will use this notation throughout the rest of the paper when
discussing $T_{a,b}$.  The reader should keep in mind $p,q, C_b$
depend implicitly on $b$.

The following elementary inequality will be used several times.
\begin{lemma}\label{elem-ineq}
Let $b > 0$ and define $p,q,C_b$ as in Notation \ref{pq-notation}. Suppose $z,w >
0$.  Then for all $x > 0$ we have
\begin{equation*}
  x^{1/p} z - xw \le C_b \frac{z^q}{w^{q-1}}
\end{equation*}
with equality when $\displaystyle x = \left(\frac{z}{pw}\right)^q$.
\end{lemma}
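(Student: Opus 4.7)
The plan is to treat the claim as a one-variable optimization: fixing $z,w > 0$, the function $f(x) := x^{1/p} z - xw$ on $(0,\infty)$ is strictly concave (since $p > 1$ makes $x \mapsto x^{1/p}$ strictly concave and $-xw$ is linear), so it has a unique maximizer determined by $f'(x) = 0$, namely $\frac{z}{p} x^{1/p - 1} = w$, which solves to $x = (z/(pw))^{p/(p-1)} = (z/(pw))^q$. This already identifies the equality point in the statement; the remaining task is to simplify $f$ at this point and recognize the right-hand side as $C_b z^q/w^{q-1}$.

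For that simplification, I would exploit the two identities $q/p = q - 1$ (equivalent to $1/p + 1/q = 1$) and $p + q = pq$. At the optimizer, using $u := x^{1/p} = (z/(pw))^{q/p} = z^{q-1}/(p^{q-1} w^{q-1})$, one gets $uz = z^q/(p^{q-1}w^{q-1})$ and $xw = u^p w = uz/p$ (from the critical-point equation $pu^{p-1}w = z$). Hence $f(x) = uz(1 - 1/p) = uz/q = \frac{1}{q p^{q-1}} \frac{z^q}{w^{q-1}} = C_b \frac{z^q}{w^{q-1}}$, which is exactly the claimed bound.

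A slicker, equivalent route would be to apply a weighted Young inequality: writing $x^{1/p} z = (xw)^{1/p} \cdot (z w^{-1/p})$ and using $ab \le \frac{(\varepsilon a)^p}{p} + \frac{(b/\varepsilon)^q}{q}$ with the choice $\varepsilon = p^{1/p}$ collapses the first term to exactly $xw$ and produces $C_b z^q / w^{q-1}$ in the second, so rearranging yields the inequality; the equality case $(\varepsilon a)^p = (b/\varepsilon)^q$ then unwinds to the same $x = (z/(pw))^q$.

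There is essentially no obstacle here; the only point requiring care is the bookkeeping with the exponents $p,q$, in particular remembering $q/p = q - 1$ and $p + q = pq$, which are what make the constant collapse to the stated $C_b = \frac{1}{q} p^{1-q}$.
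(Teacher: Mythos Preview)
Your proposal is correct and essentially matches the paper's proof: the paper reduces to $w=1$ and then applies Young's inequality with the same splitting $(px)^{1/p}\cdot(p^{-1/p}z)$, which is your weighted-Young route specialized to $w=1$, and it also remarks on the Legendre-transform interpretation, which is precisely your first-variation/concavity argument. The exponent bookkeeping you flag ($q/p=q-1$, $C_b=\tfrac{1}{q}p^{1-q}$) is exactly what the paper uses as well.
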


\begin{proof}
  We can suppose without loss of generality that $w=1$, for applying
  this case with $z$
  replaced by $z/w$ and multiplying through by $w$ yields the general
  case.

  Using Young's inequality for products $uv \le \frac{1}{p} u^p +
  \frac{1}{q} v^q$, we have
  \begin{equation*}
    x^{1/p} z - x = (px)^{1/p} (p^{-1/p} z) - x \le x + \frac{1}{q}
    p^{-q/p} z^q - x = C_b z^q.
  \end{equation*}
  Young's inequality becomes equality precisely when $u^p = v^q$,
  which in this case means $px = p^{-q/p} z^q$ or $x = p^{-q/p - 1}
  z^q$.  Since $-q/p - 1 =  -q$ this is
  the desired expression.

  Alternatively, one can let $y = -pw/z < 0$, $f(x) = -px^{1/p}$, and
  write
  \begin{equation*}
    \sup_{x > 0} x^{1/p} z - xw  = \frac{z}{p} \sup_{x > 0} (xy - f(x)) = \frac{z}{p} f^*(y)
  \end{equation*}
  where $f^*$ denotes the Legendre transformation or Fenchel conjugate of
  the convex function $f$.  It is known that $f^*(y) =
  -(-y)^{1-q}/(1-q)$ \cite[Table 3.1]{borwein-lewis} and this yields
    the desired statement.  We thank the anonymous referee for this
    observation.
\end{proof}

\begin{lemma}\label{T-generic-lower-bound}
  Suppose $a \ge 0$, $b > 0$.  Let $f : X \to \mathbb{R}$ be bounded
  and $1$-Lipschitz.  Then for every $k < \frac{1}{4a}$ we have
  \begin{equation}\label{T-generic-lower-eqn}
    T_{a,b}(\mu_0, \mu_1) \ge     C_b \frac{\left(\int_X \exp\left(\frac{kb}{e^b-4ak}
      f^2\right)\,\d\mu_1 \right)^q}
    {\left(\int_X \exp\left(\frac{kb}{\vphantom{e^b}1-4ak}
      f^2\right)\,\d\mu_0\right)^{q-1}}.
  \end{equation}
\end{lemma}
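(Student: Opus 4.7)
The plan is to produce a specific $\varphi \in \mathcal{E}_{a,b}$ whose integrals against $\mu_1, \mu_0$ realize (after a single scalar optimization) the target bound. Motivated by the exponentials of $f^2$ on the right-hand side, I would use the ansatz
\[
\varphi_s(x) = \exp\bigl(\beta(s) + \alpha(s)\, f(x)^2\bigr),
\]
with $\alpha, \beta \in C^1([0,1])$ to be determined. Since $f$ is bounded and Lipschitz, so is $f^2$, which makes $\varphi$ bounded, bounded away from $0$, and of class $C^1([0,1], \lip_b(X))$ as required by Definition \ref{T-def}.

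Plugging this into the defining differential inequality and applying the chain rule (Lemma \ref{chain-rule}) gives $|\nabla \ln \varphi_s|^2 = 4 \alpha(s)^2 f^2 |\nabla f|^2$, which together with $|\nabla f|^2 \le 1$ (from the $1$-Lipschitz hypothesis) reduces the inequality to
\[
\beta'(s) + b\beta(s) + \bigl[\alpha'(s) + 4a\alpha(s)^2 + b\alpha(s)\bigr] f(x)^2 \le 0.
\]
I would impose equality in both the constant and $f^2$ parts. Setting $\beta' + b\beta = 0$ gives $\beta(s) = \beta_0\, e^{-bs}$ with $\beta_0$ a free parameter, and the Bernoulli equation $\alpha' + 4a\alpha^2 + b\alpha = 0$ reduces under $u = 1/\alpha$ to the linear equation $u' - bu = 4a$, whose general solution $u = C e^{bs} - 4a/b$ yields
\[
\alpha(s) = \frac{kb}{e^{bs} - 4ak}.
\]
The constant $C = 1/(kb)$ is chosen precisely so that $\alpha(0) = \frac{kb}{1-4ak}$ and $\alpha(1) = \frac{kb}{e^b - 4ak}$ match the exponents appearing in the bound; the assumption $k < 1/(4a)$ ensures $\alpha$ is finite (and bounded) on $[0,1]$.

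Writing $p = e^b$ and setting $Z_1 = \int_X \exp(\alpha(1) f^2)\,\d\mu_1$ and $Z_0 = \int_X \exp(\alpha(0) f^2)\,\d\mu_0$, the remaining integrals collapse to
\[
\int \varphi_1\,\d\mu_1 - \int \varphi_0\,\d\mu_0 = e^{\beta_0/p} Z_1 - e^{\beta_0} Z_0.
\]
Setting $x = e^{\beta_0}$ and invoking Lemma \ref{elem-ineq} with $z = Z_1$, $w = Z_0$ gives $\sup_{x>0}(x^{1/p} Z_1 - x Z_0) = C_b\, Z_1^q / Z_0^{q-1}$, attained at an explicit positive $x$; choosing $\beta_0 = \ln x$ for that optimum and inserting the resulting $\varphi$ into the supremum defining $T_{a,b}(\mu_0, \mu_1)$ produces the claimed lower bound. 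The only genuinely delicate step is recognizing the exponential-of-$f^2$ ansatz and verifying that the Bernoulli ODE for $\alpha$ has boundary values matching the exponents on the right-hand side; everything else is direct computation plus the application of Lemma \ref{elem-ineq}.
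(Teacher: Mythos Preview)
Your proof is correct and follows essentially the same approach as the paper: the same ansatz $\varphi_s = \exp(\alpha(s) f^2 + \beta(s))$, the same reduction via $|\nabla f|\le 1$ to the pair of ODEs $\alpha' + 4a\alpha^2 + b\alpha = 0$ and $\beta' + b\beta = 0$, the same explicit solutions, and the same final optimization over $\beta_0$ via Lemma~\ref{elem-ineq}.
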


\begin{proof}
  We consider a function $\varphi$ of the form
  $\varphi_s = \exp(\alpha(s) f^2 + \beta(s))$.  In order to have $\varphi_s \in
  \mathcal{E}_{a,b}$ we require
  \begin{align*}
    0 &\ge \partial_s \varphi_s + a \varphi_s |\nabla \ln \varphi_s|^2
    + b \varphi_s \ln \varphi_s \\
    &= \varphi_s \cdot \left(\alpha'(s) f^2 + \beta'(s) + a
    \alpha(s)^2 |\nabla f^2|^2 + b \alpha(s) f^2 + b \beta(s)\right).
  \end{align*}
  Since $|\nabla f| \le
  1$, we have $|\nabla f^2|^2
  = (2f |\nabla f|)^2 \le 4f^2$, so it suffices to have
  \begin{align*}
    \alpha'(s) + 4 a \alpha(s)^2 + b \alpha(s) &= 0 \\
    \beta'(s) + b \beta(s) &= 0
  \end{align*}
which is satisfied by
\begin{align*}
  \alpha(s) &= \frac{kb}{e^{bs}-4ak} \\
  \beta(s) &= \beta_0 e^{-bs}
\end{align*}
for any $k < \frac{1}{4a}$ and any $\beta_0 \in \mathbb{R}$.  So with this
choice of $\varphi$, we have
\begin{align*}
  T_{a,b}(\mu_0, \mu_1) &\ge \int_X \varphi_1\,\d\mu_1 - \int_X
  \varphi_0\,\d\mu_0 \\
  &= e^{\beta_0 e^{-b}} \int_X \exp\left(\frac{kb}{e^b-4ak}
  f^2\right)\,\d\mu_1 - e^{\beta_0} \int_X \exp\left(\frac{kb}{1-4ak}
  f^2\right)\,\d\mu_0 \\
  &= C_b \frac{\left(\int_X \exp\left(\frac{kb}{e^b-4ak}
      f^2\right)\,\d\mu_1 \right)^q}
    {\left(\int_X \exp\left(\frac{kb}{\vphantom{e^b}1-4ak}
      f^2\right)\,\d\mu_0\right)^{q-1}}
\end{align*}
when we make an optimal choice of $\beta_0$ as described in Lemma \ref{elem-ineq},
with $x=e^{\beta_0}$ and noting that
$1/p=e^{-b}$.
\end{proof}

In the previous lemma, when $a=0$, the gradient terms vanish, and we
can instead consider a function $\varphi_s$ of the form $\varphi_s =
\exp(\alpha(s) f + \beta(s))$ where $f$ need only be bounded and
Lipschitz.  This yields the following improvement:

\begin{corollary}\label{T-generic-lower-a0}
  Suppose $b>0$, and let $f \in \lip_b(X)$.  Then
  \begin{equation}\label{T-lower-a0-eqn}
    T_{0,b}(\mu_0, \mu_1) \ge     C_b \frac{\left(\int_X e^{e^{-b} f}
      \,\d\mu_1 \right)^q}
    {\left(\int_X e^f \,\d\mu_0\right)^{q-1}}.
  \end{equation}
\end{corollary}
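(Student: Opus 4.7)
The plan is to mimic the proof of Lemma \ref{T-generic-lower-bound}, but with the simpler ansatz $\varphi_s(x) = \exp(\alpha(s) f(x) + \beta(s))$ suggested in the paragraph preceding the corollary. The point is that when $a=0$ there is no $|\nabla \ln \varphi_s|^2$ term to tame, so the ansatz need not be quadratic in $f$ and we do not need any bound on $|\nabla f|$; only boundedness and Lipschitz continuity of $f$ are used (in order to land in $\mathcal{E}_{0,b}$).

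First I would substitute the ansatz into the defining differential inequality. Since $\ln \varphi_s = \alpha(s) f + \beta(s)$ and $\partial_s \varphi_s = \varphi_s(\alpha'(s) f + \beta'(s))$, with $a=0$ the condition $\partial_s \varphi_s + b \varphi_s \ln \varphi_s \le 0$ becomes
\begin{equation*}
\varphi_s(x) \bigl[(\alpha'(s) + b\alpha(s)) f(x) + (\beta'(s) + b\beta(s))\bigr] \le 0.
\end{equation*}
Because $\varphi_s > 0$ and $f$ can take both signs, I am forced to impose $\alpha'(s) + b\alpha(s) = 0$, after which any $\beta$ with $\beta'(s) + b\beta(s) \le 0$ works. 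Taking $\alpha(s) = e^{-bs}$ (hence $\alpha(0) = 1$, $\alpha(1) = e^{-b} = 1/p$) and $\beta(s) = \beta_0 e^{-bs}$ for an arbitrary constant $\beta_0 \in \mathbb{R}$ gives equality in both ODEs. The resulting $\varphi_s$ is manifestly $C^1$ in $s$ with values in $\lip_b(X)$, bounded, and bounded away from zero because $f$ is bounded, so $\varphi \in \mathcal{E}_{0,b}$.

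Next I would evaluate the test functional against this family:
\begin{equation*}
\int_X \varphi_1\,\d\mu_1 - \int_X \varphi_0\,\d\mu_0 = e^{\beta_0 e^{-b}} \int_X e^{e^{-b} f}\,\d\mu_1 - e^{\beta_0} \int_X e^{f}\,\d\mu_0.
\end{equation*}
Setting $x := e^{\beta_0} > 0$, so that $e^{\beta_0 e^{-b}} = x^{1/p}$, and writing $z := \int_X e^{e^{-b} f}\,\d\mu_1$ and $w := \int_X e^{f}\,\d\mu_0$ (both strictly positive since $f$ is bounded), the expression becomes $x^{1/p} z - xw$. Supremizing over $\beta_0 \in \mathbb{R}$, that is over $x > 0$, and invoking Lemma \ref{elem-ineq} gives
\begin{equation*}
\sup_{x>0}\bigl(x^{1/p} z - xw\bigr) = C_b \frac{z^q}{w^{q-1}},
\end{equation*}
which is exactly the claimed lower bound for $T_{0,b}(\mu_0, \mu_1)$ through the supremum defining it in Definition \ref{T-def}. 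There is no real obstacle here: the argument is a direct and slightly simpler variant of Lemma \ref{T-generic-lower-bound}, the only sensitive point being to verify that the ansatz truly lies in $\mathcal{E}_{0,b}$ for every $\beta_0$, which the computation above ensures.
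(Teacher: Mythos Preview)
Your proposal is correct and follows exactly the approach sketched in the paper: substitute the ansatz $\varphi_s=\exp(\alpha(s)f+\beta(s))$ into the $\mathcal{E}_{0,b}$ inequality, solve the resulting linear ODEs $\alpha'+b\alpha=0$, $\beta'+b\beta=0$, and then optimize over $\beta_0$ via Lemma \ref{elem-ineq}. The paper itself gives only the one-sentence hint preceding the corollary, and your write-up fills in precisely those details.
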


\begin{lemma}\label{gronwall-type}
  Let $b>0, r \ge 0$.  Suppose $y : [0,1] \to (0,\infty)$ is absolutely
  continuous and satisfies the differential inequality
  \begin{equation}\label{gronwall-diff-ineq}
    y' \le ry - b y \ln y \quad \text{a.e.}, \qquad y(0) = y_0 > 0.
  \end{equation}
  Then
  \begin{equation}\label{gronwall-conclusion}
    y(s) \le \exp\left(\frac{r}{b} \left(1-e^{-bs}\right)\right) y_0^{e^{-bs}}
  \end{equation}
  for all $0 \le s \le 1$, and in particular, following Notation \ref{pq-notation},
  \begin{equation}\label{gronwall-conclusion-t1}
    y(1) \le \exp\left(\frac{r}{qb}\right) y_0^{1/p}.
  \end{equation}
\end{lemma}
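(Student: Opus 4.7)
The plan is to linearize the inequality by substituting $u(s) = \ln y(s)$, which converts the nonlinear differential inequality into an affine linear one that can be integrated directly via an integrating factor.

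First I would observe that $y$, being absolutely continuous on $[0,1]$ and strictly positive, is continuous on the compact interval $[0,1]$ and hence bounded away from $0$ and above. Therefore $\ln y$ is the composition of $y$ with a function that is Lipschitz on the range of $y$, so $\ln y$ is itself absolutely continuous on $[0,1]$ and satisfies $(\ln y)'(s) = y'(s)/y(s)$ almost everywhere. Dividing the hypothesis \eqref{gronwall-diff-ineq} through by $y(s) > 0$ then yields the linear inequality
\begin{equation*}
u'(s) + b\, u(s) \le r \quad \text{a.e.}, \qquad u(0) = \ln y_0.
\end{equation*}

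Next I would apply the integrating factor $e^{bs}$: the function $s \mapsto e^{bs} u(s)$ is absolutely continuous with derivative $e^{bs}(u'(s) + b u(s)) \le r e^{bs}$ a.e., and integrating from $0$ to $s$ gives
\begin{equation*}
e^{bs} u(s) - u(0) \le \frac{r}{b}\bigl(e^{bs} - 1\bigr),
\end{equation*}
which, after dividing by $e^{bs}$ and exponentiating, is exactly \eqref{gronwall-conclusion}. The specialization $s=1$ uses that $e^{-b} = 1/p$ and $\frac{1}{b}(1-e^{-b}) = \frac{p-1}{bp} = \frac{1}{qb}$, which gives \eqref{gronwall-conclusion-t1}.

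There is no serious obstacle here; the only point requiring a bit of care is the preliminary justification that the substitution $u = \ln y$ preserves absolute continuity and the chain rule holds a.e., which follows cleanly from the positivity of $y$ on the compact interval $[0,1]$. Everything else is a direct linear Gronwall computation.
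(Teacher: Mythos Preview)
Your proof is correct and follows essentially the same approach as the paper: the paper defines $\zeta(s) = e^{bs}\ln y(s)$ directly, observes $\zeta'(s)\le r e^{bs}$ a.e., and integrates, which is exactly your substitution $u=\ln y$ followed by the integrating factor $e^{bs}$. Your additional remark justifying the absolute continuity of $\ln y$ via positivity on a compact interval is a nice point of care that the paper leaves implicit.
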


\begin{proof}
  Let $\zeta(s) = e^{bs} \ln y(s)$; then $\zeta(s)$ satisfies
  $\zeta'(s) \le r e^{bs}$ a.e.  Integrating from $0$ to $s$ yields
  \begin{equation}
    \zeta(s) \le \ln y_0 + \frac{r}{b} \left(e^{bs}-1\right)
  \end{equation}
  which rearranges to \eqref{gronwall-conclusion}.
\end{proof}

\begin{proposition}\label{T-divergence}
  For all $a \ge 0$ and $b > 0$, we have $T_{a,b}(\mu_0, \mu_1) \ge
  C_b$, with equality iff $\mu_0 = \mu_1$.  Thus
  \begin{equation}\label{Ttilde-def}
  \widetilde{T}_{a,b}(\mu_0, \mu_1) := \ln \frac{1}{C_b}
    T_{a,b}(\mu_0,\mu_1)
  \end{equation}
  is a statistical divergence on $\mathcal{P}(X)$; that is, 
  $\widetilde{T}_{a,b}(\mu_0, \mu_1) \ge 0$ with equality iff $\mu_0 = \mu_1$.
\end{proposition}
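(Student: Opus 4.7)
The plan is to prove the inequality $T_{a,b}(\mu_0,\mu_1) \ge C_b$ universally by exhibiting a concrete admissible test function, to prove equality on the diagonal by a pointwise Gr\"onwall argument followed by Jensen, and to rule out equality off the diagonal by a perturbative construction. For the universal lower bound I would substitute into Definition \ref{T-def} the spatially constant family $\varphi_s(x) \equiv \exp(\beta_0 e^{-bs})$. The gradient term vanishes, and direct differentiation shows $\partial_s \varphi_s + b \varphi_s \ln \varphi_s \equiv 0$, so $\varphi \in \mathcal{E}_{a,b}$ for any $\beta_0 \in \mathbb{R}$. This yields
\begin{equation*}
\int \varphi_1\,\d\mu_1 - \int \varphi_0\,\d\mu_0 = e^{\beta_0/p} - e^{\beta_0},
\end{equation*}
whose supremum over $\beta_0$ equals $C_b$ by Lemma \ref{elem-ineq} (with $z = w = 1$), attained at $e^{\beta_0} = p^{-q}$.

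For the matching upper bound when $\mu_0 = \mu_1 =: \mu$, fix any $\varphi \in \mathcal{E}_{a,b}$ and set $y(s) := \varphi_s(x)$ for fixed $x \in X$. Since the gradient term in the defining differential inequality is nonnegative, $y$ satisfies $y'(s) \le -b y(s) \ln y(s)$, so Lemma \ref{gronwall-type} (with $r = 0$) gives the pointwise bound $\varphi_1(x) \le \varphi_0(x)^{1/p}$. Integrating against $\mu$ and applying Jensen's inequality to the concave function $t \mapsto t^{1/p}$ leads to
\begin{equation*}
\int \varphi_1\,\d\mu - \int \varphi_0\,\d\mu \le Z^{1/p} - Z \le C_b,
\end{equation*}
where $Z := \int \varphi_0\,\d\mu$ and the last bound is Lemma \ref{elem-ineq} again. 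Taking the supremum over $\varphi$ gives $T_{a,b}(\mu,\mu) \le C_b$, hence equality by the previous step.

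For strict inequality when $\mu_0 \ne \mu_1$, I would perturb the optimal constant test function. Choose $g \in \lip_b(X)$ separating $\mu_0$ and $\mu_1$, i.e.\ with $\int g\,\d\mu_0 \ne \int g\,\d\mu_1$, and consider $\varphi_s(x) = \exp(\varepsilon \alpha_0 e^{-bs} g(x) + \beta(s))$, where the profile $\alpha(s) = \alpha_0 e^{-bs}$ solves $\alpha' + b\alpha = 0$ so that the first-order-in-$g$ term in the defining inequality vanishes identically, and $\beta(s)$ is adjusted to absorb the $O(\varepsilon^2)$ gradient contribution (possible since $\|\nabla g\|_\infty < \infty$). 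A first-order expansion around $\beta_0 = -qb$, using the algebraic identity $p^{-1-q/p} = p^{-q}$ (equivalent to $q = p/(p-1)$), yields
\begin{equation*}
\int \varphi_1\,\d\mu_1 - \int \varphi_0\,\d\mu_0 = C_b + \varepsilon \alpha_0\, p^{-q}\left[\int g\,\d\mu_1 - \int g\,\d\mu_0\right] + O(\varepsilon^2).
\end{equation*}
Choosing the sign of $\alpha_0$ to make the bracketed linear term positive and taking $\varepsilon$ small produces an admissible $\varphi$ witnessing $T_{a,b}(\mu_0,\mu_1) > C_b$. The statement about $\widetilde{T}_{a,b}$ is then immediate from the definition $\widetilde{T}_{a,b} = \ln(T_{a,b}/C_b)$. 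I expect the main obstacle to be the bookkeeping and admissibility verification in this last perturbative step; the preceding two pieces are essentially mechanical once the constant test function is identified as the optimizer.
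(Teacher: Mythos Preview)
Your argument is correct. The first two steps match the paper's essentially verbatim: the lower bound $T_{a,b}\ge C_b$ is obtained in the paper by specializing Lemma~\ref{T-generic-lower-bound} to $f=0$, which is exactly your constant test function; and the upper bound on the diagonal is the same Gr\"onwall-to-$\varphi_1\le\varphi_0^{1/p}$ argument. (Your intermediate use of Jensen is harmless but unnecessary: the paper simply notes that $\varphi_0(x)^{1/p}-\varphi_0(x)\le C_b$ pointwise by Lemma~\ref{elem-ineq} and then integrates.)

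For the ``only if'' direction you take a genuinely different route. The paper argues by contradiction: assuming $T_{a,b}(\mu_0,\mu_1)=C_b$, it invokes the parametrized family of lower bounds in Lemma~\ref{T-generic-lower-bound} and differentiates in the parameter $k$ at $k=0$ to obtain $\int f^2\,\d\mu_1\le\int f^2\,\d\mu_0$ for all bounded $1$-Lipschitz $f$; replacing $f$ by $f+c$ and sending $c\to\pm\infty$ then forces $\int f\,\d\mu_1=\int f\,\d\mu_0$. Your approach instead linearizes directly around the optimal constant test function, choosing $\psi_s=\varepsilon\alpha_0 e^{-bs}g+\beta(s)$ with $\beta$ absorbing the $O(\varepsilon^2)$ gradient term so that admissibility is preserved. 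The key algebraic coincidence you exploit, $e^{\beta_0/p}p^{-1}=e^{\beta_0}=p^{-q}$ at the optimal $\beta_0$, is precisely what makes the first-order coefficient collapse to $p^{-q}(\int g\,\d\mu_1-\int g\,\d\mu_0)$, and this is correct. Your method is arguably more transparent---it exhibits an explicit witness rather than differentiating an inequality---while the paper's route has the advantage of reusing Lemma~\ref{T-generic-lower-bound} wholesale without any new admissibility check.
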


\begin{proof}
  The lower bound $T_{a,b}(\mu_0, \mu_1) \ge C_b$ follows from Lemma
  \ref{T-generic-lower-bound} with $f=0$.  

  To show equality holds when $\mu_0= \mu_1 = \mu$, let $\varphi \in
  \mathcal{E}_{a,b}$.  Note that in particular, $\varphi$ satisfies
  $\partial_s \varphi_s + b \varphi_s \ln \varphi_s \le 0$; that is,
  $\mathcal{E}_{a,b} \subseteq \mathcal{E}_{0,b}$.  So for each $x$,
  $y(s) = \varphi_s(x)$ satisfies \eqref{gronwall-diff-ineq} with
  $r=0$, and so by Lemma \ref{gronwall-type} and Lemma \ref{elem-ineq}
  we have
  \begin{equation*}
    \varphi_1(x) - \varphi_0(x) \le \varphi_0(x)^{1/p} - \varphi_0(x)
    \le C_b.
  \end{equation*}
  Thus $\int_X (\varphi_1 - \varphi_0)\,d\mu \le C_b$ and taking the
  supremum over $\varphi_s \in \mathcal{E}_{a,b}$ we have
  $T_{a,b}(\mu,\mu) \le C_b$.

  Conversely, suppose $\mu_0, \mu_1$ satisfy $T_{a,b}(\mu_0, \mu_1) =
  C_b$.  Let $f : X \to \mathbb{R}$ be bounded and 1-Lipschitz.  Lemma
  \ref{T-generic-lower-bound} then implies
  \begin{equation*}
    \left(\int_X \exp\left(\frac{kb}{e^b-4ak}
      f^2\right)\,\d\mu_1 \right)^q \le \left(\int_X \exp\left(\frac{kb}{\vphantom{e^b}1-4ak}
      f^2\right)\,\d\mu_0\right)^{q-1}
  \end{equation*}
  for every $k < \frac{1}{4a}$.  When $k=0$, both sides equal $1$, so we
  differentiate the inequality at $k=0$ to obtain
  \begin{equation*}
    q b e^{-b} \int_X f^2\,\d\mu_1 \le (q-1) b \int_X f^2\,\d\mu_0
  \end{equation*}
  which rearranges to
  \begin{equation*}
    \int_X f^2\,\d\mu_1 \le \int_X f^2\,\d\mu_0
  \end{equation*}
  since $q/(q-1) = p = e^b$.  The rest is a density argument.
  Replacing $f$ by $f+c$ for an arbitrary constant $c \in \mathbb{R}$ and expanding, we
  get
  \begin{equation*}
    \int_X f^2\,\d\mu_1 + 2c \int_X f\,\d\mu_1 + c^2 \le \int_X
    f^2\,\d\mu_0 + 2c \int_X f\,\d\mu_0 + c^2
  \end{equation*}
  Letting $c \to \pm \infty$, we see this implies $\int_X f\,\,d\mu_1 =
  \int_X f\,\d\mu_0$ for all bounded 1-Lipschitz $f$, and by scaling,
  the same holds for all bounded Lipschitz $f$.  This implies $\mu_0 = \mu_1$.
\end{proof}

We now show that when $a=0$, $\widetilde{T}_{0,b}$ recovers the R\'enyi
divergence, whose definition we recall:

\begin{definition}
  For $\mu_0, \mu_1 \in \mathcal{P}(X)$ and $r > 1$, the R\'enyi
  divergence of order $r$ is given by
  \begin{equation*}
    D_r(\mu_1 \,\|\,  \mu_0) := \frac{1}{r-1} \ln \int_X
  \left(\frac{\d \mu_1}{\d\mu_0}\right)^r\,\d\mu_0 = \frac{1}{r-1} \ln \int_X
  \left(\frac{\d \mu_1}{\d\mu_0}\right)^{r-1}\,\d\mu_1
  \end{equation*}
  if $\mu_1$ is absolutely continuous with respect to $\mu_0$, and $D_r(\mu_1
  \,\|\,  \mu_0) = \infty$ otherwise.
\end{definition}

\begin{lemma}\label{T-abs-cont}
  Let $b > 0$. If $T_{0,b}(\mu_0, \mu_1) < \infty$ then $\mu_1$ is
  absolutely continuous with respect to $\mu_0$.
\end{lemma}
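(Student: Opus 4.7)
The plan is to contrapose: assume $\mu_1$ fails to be absolutely continuous with respect to $\mu_0$ and show $T_{0,b}(\mu_0,\mu_1) = \infty$. The key tool is the lower bound in Corollary \ref{T-generic-lower-a0}, which for any $f \in \lip_b(X)$ gives
\begin{equation*}
  T_{0,b}(\mu_0, \mu_1) \ge C_b \frac{\left(\int_X e^{e^{-b} f}\,\d\mu_1\right)^q}{\left(\int_X e^f\,\d\mu_0\right)^{q-1}}.
\end{equation*}
The idea is to produce a sequence of bounded Lipschitz functions $f$ whose exponentials concentrate on a $\mu_0$-null set that carries positive $\mu_1$-mass, forcing the numerator to explode while keeping the denominator bounded.

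If $\mu_1 \not\ll \mu_0$, choose a Borel set $A \subseteq X$ with $\mu_0(A)=0$ and $\mu_1(A) > 0$. By inner regularity of Borel probability measures on a separable metric space, I pick a closed set $F \subseteq A$ with $\mu_1(F) =: c > 0$, and I still have $\mu_0(F) = 0$. Next I define the Lipschitz cutoffs
\begin{equation*}
  g_n(x) := \bigl(1 - n \,d(x,F)\bigr) \vee 0, \qquad n \in \N,
\end{equation*}
which satisfy $0 \le g_n \le 1$, $g_n \equiv 1$ on $F$, and (since $F$ is closed) $g_n(x) \to \chi_F(x)$ pointwise on $X$. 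For a parameter $M > 0$ set $f_{n,M} := M g_n \in \lip_b(X)$.

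Now I evaluate the two integrals. The numerator is bounded below using $g_n = 1$ on $F$:
\begin{equation*}
  \int_X e^{e^{-b} f_{n,M}}\,\d\mu_1 \ge e^{e^{-b} M}\, \mu_1(F) = c\, e^{e^{-b} M}.
\end{equation*}
For the denominator, $e^{M g_n} \le e^M$ is a uniform bound and $e^{M g_n} \to e^{M \chi_F}$ pointwise, so dominated convergence and $\mu_0(F)=0$ yield
\begin{equation*}
  \lim_{n\to\infty} \int_X e^{f_{n,M}}\,\d\mu_0 = \int_X e^{M\chi_F}\,\d\mu_0 = \mu_0(X\setminus F) + e^M \mu_0(F) = 1.
\end{equation*}
Feeding these into Corollary \ref{T-generic-lower-a0} and first letting $n \to \infty$ gives
\begin{equation*}
  T_{0,b}(\mu_0,\mu_1) \ge C_b \, c^q \, e^{q e^{-b} M},
\end{equation*}
and then letting $M \to \infty$ forces $T_{0,b}(\mu_0,\mu_1) = \infty$, the desired contradiction.

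The argument has no real obstacle; the only subtle point is ensuring one can approximate $\chi_F$ by admissible test functions, which is handled by the standard Lipschitz truncation $g_n$ together with the fact that, in Corollary \ref{T-generic-lower-a0}, $f$ is only required to lie in $\lip_b(X)$ (no Lipschitz constant restriction), so $Mg_n$ is legitimate for every $M$.
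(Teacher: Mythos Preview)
Your proof is correct and follows essentially the same approach as the paper's: both arguments feed Lipschitz approximations of an indicator function into Corollary \ref{T-generic-lower-a0}. The only cosmetic difference is that the paper argues directly, taking nonpositive test functions $f_n$ with $f_n \to 0$ on $A$ and $f_n \to -\infty$ on $A^c$ (so that the $\mu_0$-integral in the denominator tends to $\mu_0(A)=0$ and the $\mu_1$-integral tends to $\mu_1(A)$), whereas you argue by contraposition with nonnegative test functions $Mg_n$ and push the numerator to $+\infty$ via the extra parameter $M$; your explicit construction $g_n(x)=(1-n\,d(x,F))\vee 0$ and the appeal to inner regularity make the approximation step a bit more concrete than the paper's.
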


\begin{proof}
  Let $A \subset X$ be a Borel set for which $\mu_0(A) = 0$.  We can
  then find a sequence of bounded nonpositive Lipschitz functions
  $f_n$ such that $f_n(x) \to 0$ for a.e. $x \in A$ (with respect to
  $\mu_0 + \mu_1$), and $f_n(x) \to -\infty$ for a.e. $x \in A^c$.
  Applying Corollary \ref{T-generic-lower-a0} to $f_n$, we have
  \begin{equation*}
    \left(\int_X e^{e^{-b} f_n}\,\d\mu_1\right)^q \le
    \frac{T_{0,b}(\mu_0,\mu_1)}{C_b} \left(\int_X e^{f_n}\,\d\mu_0\right)^{q-1}.
  \end{equation*}
  Letting $n \to \infty$ and using dominated convergence, this becomes
  \begin{equation*}
    \mu_1(A)^q \le \frac{T_{0,b}(\mu_0,\mu_1)}{C_b} \mu_0(A)^{q-1} = 0.
  \end{equation*}
\end{proof}

\begin{proposition}\label{T-renyi}
  Let $b>0$.  For all $\mu_0, \mu_1 \in \mathcal{P}(X)$ we have
  \begin{equation}
    T_{0,b}(\mu_0, \mu_1) = \begin{cases} C_b \int_X
      \left(\frac{\d\mu_1}{\d \mu_0}\right)^q\,\d\mu_0, & \mu_1 \ll
      \mu_0 \\
\infty, & \mu_1 \not\ll \mu_0
\end{cases}
  \end{equation}
so that
  \begin{equation}
    \widetilde{T}_{0,b}(\mu_0, \mu_1) = (q-1) D_q(\mu_1 \,\|\,  \mu_0).
  \end{equation}
\end{proposition}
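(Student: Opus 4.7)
The plan is to split into three cases: $\mu_1 \not\ll \mu_0$ (divergence), and then upper and lower bounds in the absolutely continuous case. The $\mu_1 \not\ll \mu_0$ case is immediate from Lemma \ref{T-abs-cont}, which (contrapositively) says that if $\mu_1$ fails to be absolutely continuous with respect to $\mu_0$ then $T_{0,b}(\mu_0,\mu_1)=\infty$, matching the claim. So the substantive work is the identification $T_{0,b}(\mu_0,\mu_1) = C_b\int h^q\,d\mu_0$ with $h = d\mu_1/d\mu_0$.

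For the upper bound, given any $\varphi \in \mathcal{E}_{0,b}$, applying Lemma \ref{gronwall-type} with $r=0$ pointwise in $x$ to $y(s) = \varphi_s(x)$ yields $\varphi_1(x) \le \varphi_0(x)^{1/p}$. Writing $d\mu_1 = h\,d\mu_0$, we therefore have
\begin{equation*}
\int_X \varphi_1\,d\mu_1 - \int_X \varphi_0\,d\mu_0 \le \int_X \bigl(\varphi_0^{1/p} h - \varphi_0\bigr)\,d\mu_0 \le C_b \int_X h^q\,d\mu_0,
\end{equation*}
where the second inequality is Lemma \ref{elem-ineq} applied pointwise with $z=h(x)$, $w=1$, and variable $\varphi_0(x)$. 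Taking the supremum over $\varphi$ gives $T_{0,b}(\mu_0,\mu_1) \le C_b\int h^q\,d\mu_0$.

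For the lower bound I want to exhibit near-equality. The equality cases of the two inequalities above point to the choice $\varphi_s = \varphi_0^{e^{-bs}}$ (making the Gronwall step sharp) with $\varphi_0 = h^q/p^q$ (making the Young step sharp); a direct computation then gives $\int \varphi_1\,d\mu_1 - \int \varphi_0\,d\mu_0 = (p^{-(q-1)} - p^{-q})\int h^q\,d\mu_0 = C_b\int h^q\,d\mu_0$ using $p-1 = p/q$. Since $h$ is merely in $L^1(\mu_0)$ and need not be bounded, Lipschitz, or bounded away from $0$, I would take $\varphi_0 = g^q/p^q$ for $g \in \lip_b(X)$ bounded away from $0$. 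One checks that $\varphi_s := \varphi_0^{e^{-bs}}$ lies in $C^1([0,1],\lip_b(X))$, remains bounded and bounded away from $0$, and satisfies the defining inequality of $\mathcal{E}_{0,b}$ with equality, giving the testing identity
\begin{equation*}
T_{0,b}(\mu_0,\mu_1) \ge p^{-(q-1)}\int_X g^{q-1}\,d\mu_1 - p^{-q}\int_X g^q\,d\mu_0.
\end{equation*}

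The main technical obstacle is then the approximation of $h$ by such $g$'s to take the limit. I would first truncate: $h^{(N)} := \max(\min(h,N),1/N) \in [1/N,N]$ is bounded Borel. Using density of $\lip_b(X)$ in $L^q(\mu_0+\mu_1)$ I pick $g_{n,N} \in \lip_b(X)$ converging to $h^{(N)}$, then replace by $\max(\min(g_{n,N},N),1/N)$, which preserves both the range and $L^q$ convergence since truncation is $1$-Lipschitz. Passing to the limit $n\to\infty$ along a subsequence converging a.e., bounded convergence gives the testing identity with $g$ replaced by $h^{(N)}$. For the limit $N\to\infty$, when $\int h^q\,d\mu_0 < \infty$ I dominate $(h^{(N)})^q$ by $2^q(h^q+1)$ and $(h^{(N)})^{q-1}h$ by $2^{q-1}(h^q+h)$ and invoke dominated convergence to obtain $T_{0,b}(\mu_0,\mu_1) \ge C_b\int h^q\,d\mu_0$. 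When $\int h^q\,d\mu_0 = \infty$, I observe that on the set $\{1/N \le h \le N\}$ the integrand $p^{-(q-1)}(h^{(N)})^{q-1}h - p^{-q}(h^{(N)})^q$ equals $C_b h^q$, while the remaining parts contribute $O(1)$ in $N$; monotone convergence then forces the lower bound to tend to $+\infty$, completing the proof.
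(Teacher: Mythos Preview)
Your proof is correct and follows the same overall strategy as the paper: the non-absolutely-continuous case via Lemma~\ref{T-abs-cont}, the upper bound via the pointwise Gr\"onwall estimate $\varphi_1 \le \varphi_0^{1/p}$ combined with Lemma~\ref{elem-ineq}, and the lower bound by testing against the exact Hamilton--Jacobi solution $\varphi_s = \varphi_0^{e^{-bs}}$.

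The main organizational difference is in the lower bound. The paper does not re-derive the test function but simply invokes Corollary~\ref{T-generic-lower-a0}, which already encodes the choice $\varphi_s = e^{e^{-bs}f}$ together with the optimization over the additive constant $\beta_0$; this yields the quotient-form bound
\[
T_{0,b}(\mu_0,\mu_1)\;\ge\; C_b\,\frac{\bigl(\int e^{f/p}\,d\mu_1\bigr)^q}{\bigl(\int e^{f}\,d\mu_0\bigr)^{q-1}},
\]
after which one takes $e^{f_n}\to\varrho^q$ in $L^1(\mu_0)$ and uses Fatou on the $\mu_1$-side. Your route instead fixes the constant (taking $\varphi_0 = g^q/p^q$) to obtain the additive testing identity, and then runs a two-stage approximation (truncate $h$ to $[1/N,N]$, then Lipschitz-approximate). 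Both approaches are equivalent reparametrizations of the same test function; the paper's version is a touch shorter because Corollary~\ref{T-generic-lower-a0} has already done the work, while your version is more self-contained.

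One small imprecision: in the case $\int h^q\,d\mu_0=\infty$, the ``remaining parts'' are not literally $O(1)$. The contribution from $\{h<1/N\}$ is indeed $o(1)$, but on $\{h>N\}$ the integrand $p^{-(q-1)}N^{q-1}h - p^{-q}N^q$ can be large; what saves you is that it is \emph{nonnegative} there (since $h>N>N/p$), so it can simply be dropped from the lower bound. With that correction the monotone-convergence argument goes through as you intended.
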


\begin{proof}
  The case $\mu_1 \not\ll \mu_0$ is the contrapositive of Lemma \ref{T-abs-cont},
  so suppose $\mu_1 \ll \mu_0$ and let $\varrho =
  \frac{\d\mu_1}{\d\mu_0}$.  We show $T_{0,b} = C_b \int_X
  \varrho^q\,\d\mu_0$.

  To show $T_{0,b} \le C_b \int_X
  \varrho^q\,\d\mu_0$, let $\varphi \in \mathcal{E}_{0,b}$.  Taking
  $y(s) = \varphi_s(x)$ and $r=0$ in Lemma \ref{gronwall-type}, we have $\varphi_1 \le
  \varphi_0^{1/p}$ pointwise.  Hence
  \begin{align*}
    \int_X \varphi_1\,\d\mu_1 - \int_X \varphi_0\,\d\mu_0 &= \int_X
    \left(\varphi_1 \varrho - \varphi_0\right)\,\d\mu_0 \\ &\le \int_X
    \left(\varphi_0^{1/p} \varrho - \varphi_0\right)\,\d\mu_0 \\
    &\le C_b \int_X \varrho^q\,\d\mu_0
  \end{align*}
  by Lemma \ref{elem-ineq}.  Taking the supremum over $\varphi \in
  \mathcal{E}_{0,b}$ yields the desired upper bound.

  For the lower bound, if $\varrho \in L^q(\mu_0)$, take a sequence
  $f_n \in \lip_b(X)$ such that $e^{f_n} \to \varrho^q$,
  $\mu_0$-almost everywhere (hence also $\mu_1$-almost everywhere) and
  in $L^1(\mu_0)$.  Then Corollary \ref{T-generic-lower-a0} gives
  \begin{equation*}
    C_b \left(\int_X e^{e^{-b} f_n}\,\d\mu_1\right)^q \le
    T_{0,b}(\mu_0,\mu_1) \left(\int_X e^{f_n}\,\d\mu_0\right)^{q-1}.
  \end{equation*}
  Pass to the limit, applying Fatou's lemma on the left and $L^1$
  convergence on the right, to obtain
  \begin{equation}\label{varrho-lower}
    C_b \left(\int_X \varrho^{q e^{-b}} \,\d\mu_1\right)^q \le
    T_{0,b}(\mu_0,\mu_1) \left(\int_X \varrho^q\,\d\mu_0\right)^{q-1}.
  \end{equation}
  Now observe that $qe^{-b} = q/p = q-1$ and so $\int_X \varrho^{q
    e^{-b}} \,\d\mu_1 = \int_X
  \varrho^{q-1}\,\d\mu_1 = \int_X \varrho^{q}\,\d\mu_0$.  Hence
  \eqref{varrho-lower} rearranges to
  \begin{equation*}
    C_b \int_X \varrho^q \,\d\mu_0 \le T_{0,b}(\mu_0,\mu_1)
  \end{equation*}
  as desired.

  If $\varrho \notin L^q(\mu_0)$, then we need to show $T_{0,b}(\mu_0,
  \mu_1) = \infty$.  Let $m \ge 0$ and $A_m = \{\varrho \le m\}$.  Choose $f_n \in \lip_b(X)$ with $e^{f_n} \to
  \varrho^q 1_{A_m}$, $\mu_0$-a.e. and in $L^1(\mu_0)$.  Then
  proceeding as in the previous case, we obtain
  \begin{equation*}
    C_b \int_{A_m} \varrho^q \,\d\mu_0 \le T_{0,b}(\mu_0, \mu_1).
  \end{equation*}
  Letting $m \to \infty$ and applying the monotone convergence
  theorem, we conclude that $T_{0,b}(\mu_0, \mu_1) = +\infty$.
\end{proof}

Finally, we estimate the value of $T_{a,b}$ for point masses.

\begin{proposition}\label{T-point-mass-improved}
  Suppose $a,b > 0$, $x_0, x_1 \in X$.  Then
  \begin{equation}\label{T-point-mass-eqn-improved}
    C_b \exp\left(\frac{bq}{4a(p-1)} d(x_0,x_1)^2\right) \le
    T_{a,b}(\delta_{x_0}, \delta_{x_1}) \le C_b
    \exp\left(\frac{1}{4ab}d(x_0, x_1)^2\right)
  \end{equation}
  or in terms of $\widetilde{T}_{a,b}$,
  \begin{equation}\label{Ttilde-point-mass-eqn-improved}
    \frac{bq}{4a(p-1)} d(x_0,x_1)^2 \le
    \widetilde{T}_{a,b}(\delta_{x_0}, \delta_{x_1}) \le
    \frac{1}{4ab}d(x_0, x_1)^2.
  \end{equation}
\end{proposition}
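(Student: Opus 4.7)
The plan is to prove the two bounds in \eqref{T-point-mass-eqn-improved} by separate arguments, each exploiting a lemma already in place; the inequalities \eqref{Ttilde-point-mass-eqn-improved} then follow at once by taking logarithms and using the definition of $\widetilde{T}_{a,b}$.

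For the lower bound I specialize Lemma \ref{T-generic-lower-bound}. Taking $f(x) = d(x, x_0) \wedge R$ for some $R > d(x_0, x_1)$ gives a bounded $1$-Lipschitz test function with $f(x_0) = 0$ and $f(x_1) = d(x_0, x_1)$, so that $\mu_0 = \delta_{x_0}$ turns the denominator in \eqref{T-generic-lower-eqn} into $1$ and $\mu_1 = \delta_{x_1}$ turns the numerator into $\exp\!\bigl(\frac{kbq}{e^b-4ak} d(x_0, x_1)^2\bigr)$. A direct computation shows that $k \mapsto k/(e^b-4ak)$ is strictly increasing on $[0, 1/(4a))$, with supremum $1/(4a(p-1))$, so letting $k \uparrow 1/(4a)$ delivers the stated lower bound.

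For the upper bound I adapt the geodesic argument from Proposition \ref{W-Dirac-distance-prop} to the entropic differential inequality. Fix $\varphi \in \mathcal{E}_{a,b}$ and a constant-speed geodesic $\gamma : [0,1] \to X$ from $x_0$ to $x_1$; since $\varphi_s > 0$, Lemma \ref{chain-rule} gives $|\nabla \varphi_s| = \varphi_s |\nabla \ln \varphi_s|$. Writing $y(s) = \varphi_s(\gamma_s)$ and using the strong upper gradient property together with the inequality defining $\mathcal{E}_{a,b}$, I bound
\begin{equation*}
y'(s) \le \varphi_s(\gamma_s)\bigl(-a|\nabla \ln \varphi_s|(\gamma_s)^2 + |\nabla \ln \varphi_s|(\gamma_s)\, d(x_0, x_1) - b \ln \varphi_s(\gamma_s)\bigr).
\end{equation*}
Completing the square in $|\nabla \ln \varphi_s|(\gamma_s)$ absorbs the gradient terms and leaves
\begin{equation*}
y'(s) \le \frac{d(x_0, x_1)^2}{4a} y(s) - b y(s) \ln y(s),
\end{equation*}
which is precisely the hypothesis of Lemma \ref{gronwall-type} with $r = d(x_0,x_1)^2/(4a)$, giving $y(1) \le \exp\!\bigl(d(x_0,x_1)^2/(4abq)\bigr) \varphi_0(x_0)^{1/p}$ by \eqref{gronwall-conclusion-t1}. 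Then a single application of Lemma \ref{elem-ineq} with $x = \varphi_0(x_0)$, $z = \exp\!\bigl(d(x_0,x_1)^2/(4abq)\bigr)$, and $w = 1$ (noting $z^q = \exp\!\bigl(d(x_0,x_1)^2/(4ab)\bigr)$) yields
\begin{equation*}
\varphi_1(x_1) - \varphi_0(x_0) \le C_b \exp\!\left(\frac{d(x_0, x_1)^2}{4ab}\right),
\end{equation*}
and taking the supremum over $\varphi \in \mathcal{E}_{a,b}$ concludes.

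No step is conceptually deep, but the main care required is the completing-the-square step in the upper bound: one must verify that after absorbing $|\nabla \ln \varphi_s|(\gamma_s)$, the residual scalar differential inequality has a coefficient $r$ that depends only on the endpoints $x_0, x_1$ and not on $s$, which is precisely what permits invocation of Lemma \ref{gronwall-type} in its stated form.
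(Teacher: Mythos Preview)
Your proposal is correct and follows essentially the same approach as the paper's proof: the lower bound comes from Lemma \ref{T-generic-lower-bound} with the truncated distance function and letting $k \uparrow 1/(4a)$, while the upper bound runs the geodesic argument, completes the square in $|\nabla \ln \varphi_s|$, and then invokes Lemmas \ref{gronwall-type} and \ref{elem-ineq}. The only differences are cosmetic (your truncation level $R>d(x_0,x_1)$ versus the paper's $R=d(x_0,x_1)$, and your slightly more explicit bookkeeping in the application of Lemma \ref{elem-ineq}).
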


\begin{proof}
For the upper bound, suppose  $\varphi \in \mathcal{E}_{a,b}$, and as in the proof of the
upper bound in 
Proposition \ref{W-Dirac-distance-prop}, let $\gamma : [0,1] \to X$ be
a constant speed geodesic joining $x_0$ to $x_1$.  Using the chain
rule, we have
\begin{align*}
  \frac{d}{ds} \varphi_s(\gamma_s) &\le \partial_s \varphi_s(\gamma_s) + |\nabla
  \varphi_s|(\gamma_s) d(x_0, x_1) \\
    &\le -\frac{a}{\varphi_s(\gamma_s)} |\nabla
    \varphi_s|(\gamma_s)^2 - b \varphi_s(\gamma_s) \ln
    \varphi_s(\gamma_s) + |\nabla
    \varphi_s|(\gamma_s) d(x_0, x_1) \\
    &\le \frac{d(x_0, x_1)^2}{4a} \varphi_s(\gamma_s) -
      b \varphi_s(\gamma_s) \ln \varphi_s(\gamma_s)
\end{align*}
by completing the square.  So $y(s) = \varphi_s(\gamma_s)$ satisfies
the differential inequality \eqref{gronwall-diff-ineq} with $r=d(x_0,
x_1)^2/4a$, and by Lemma \ref{gronwall-type} and Lemma \ref{elem-ineq}
we have
\begin{align*}
  \varphi_1(x_1) - \varphi_0(x_0) &\le \exp\left(\frac{1}{4abq}d(x_0,
    x_1)^2\right) \varphi_0(x_0)^{1/p} - \varphi_0(x_0) \\
  &\le C_b \exp\left(\frac{1}{4ab}d(x_0,
    x_1)^2\right).
\end{align*}

For the lower bound, apply Lemma \ref{T-generic-lower-bound} with
$\mu_i = \delta_{x_i}$ and $f(x) = d(x_0, x) \wedge d(x_0, x_1)$,
which is bounded and $1$-Lipschitz.  Since $f(x_0) = 0$, the $\d\mu_0$
integral in \eqref{T-generic-lower-eqn} equals $1$, and we obtain
\begin{equation*}
  T_{a,b}(\delta_{x_0}, \delta_{x_1}) \ge C_b \exp\left(\frac{qkb}{e^b
    - 4ak} d(x_0, x_1)^2\right)
\end{equation*}
for any $k < 1/4a$.  Letting $k \uparrow 1/4a$ and recalling that
$e^b=p$, we have the desired inequality.
\end{proof}

\begin{corollary}
  For $\mu_0, \mu_1 \in \mathcal{P}(X)$, we have
  \begin{equation}\label{T-primal}
    T_{a,b}(\mu_0, \mu_1) \le C_b \inf_{\pi} \int_{X \times X}
    \exp\left(\frac{1}{4ab} d(x,y)^2\right) \pi(\d x, \d y).
  \end{equation}
  where the infimum is taken over all couplings $\pi \in \mathcal{P}(X
  \times X)$ of $\mu_0, \mu_1$.  As a special case, we have
  \begin{equation}
    T_{a,b}(\delta_x, \mu) \le C_b \int_X \exp\left(\frac{1}{4ab}
    d(x,y)^2\right) \,\mu(\d y).
  \end{equation}
\end{corollary}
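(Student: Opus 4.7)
The plan is to exploit the dynamic dual definition of $T_{a,b}$ directly, using the pointwise upper bound on $\varphi_1(x_1)-\varphi_0(x_0)$ that was established in the proof of Proposition \ref{T-point-mass-improved}. Fix a coupling $\pi \in \mathcal{P}(X\times X)$ of $\mu_0, \mu_1$ and pick any $\varphi \in \mathcal{E}_{a,b}$. Using the marginal property of $\pi$, I would rewrite
\[
\int_X \varphi_1\,\d\mu_1 - \int_X \varphi_0\,\d\mu_0 = \int_{X\times X} \bigl(\varphi_1(x_1) - \varphi_0(x_0)\bigr)\,\pi(\d x_0, \d x_1).
\]

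Next I would invoke the key pointwise estimate from the proof of Proposition \ref{T-point-mass-improved}, which, via the geodesic-plus-Gr\"onwall argument, gives for every $x_0, x_1 \in X$
\[
\varphi_1(x_1) - \varphi_0(x_0) \le C_b \exp\!\left(\frac{1}{4ab}\, d(x_0,x_1)^2\right).
\]
Integrating this pointwise bound against $\pi$ yields
\[
\int_X \varphi_1\,\d\mu_1 - \int_X \varphi_0\,\d\mu_0 \le C_b \int_{X\times X} \exp\!\left(\frac{1}{4ab}\, d(x_0, x_1)^2\right) \pi(\d x_0, \d x_1).
\]
Now take the supremum over $\varphi \in \mathcal{E}_{a,b}$ on the left to get $T_{a,b}(\mu_0,\mu_1)$ on the left, and then take the infimum over all couplings $\pi$ on the right to obtain the asserted bound.

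The special case $\mu_0 = \delta_x$ follows immediately by observing that $\pi = \delta_x \otimes \mu$ is the unique coupling of $\delta_x$ and $\mu$, which gives
\[
T_{a,b}(\delta_x, \mu) \le C_b \int_X \exp\!\left(\frac{1}{4ab}\, d(x,y)^2\right)\mu(\d y).
\]

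There is no real obstacle here: the work was already done in Proposition \ref{T-point-mass-improved}, and the present corollary simply lifts the point-mass estimate to arbitrary measures via the standard Kantorovich-type duality trick of testing against a coupling. The only thing worth flagging is that the pointwise bound used is genuinely the \emph{integrand} estimate established in the earlier proof, not merely its conclusion for Dirac masses, so strictly speaking one should refer to the body of that proof rather than just its statement.
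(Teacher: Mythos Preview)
Your proof is correct and follows essentially the same route as the paper: rewrite the objective against a coupling, apply the pointwise bound $\varphi_1(x_1)-\varphi_0(x_0)\le C_b\exp\bigl(\frac{1}{4ab}d(x_0,x_1)^2\bigr)$ from Proposition~\ref{T-point-mass-improved}, then take $\sup_\varphi$ and $\inf_\pi$. The only cosmetic difference is that the paper inserts the intermediate inequality $\varphi_1(y)-\varphi_0(x)\le T_{a,b}(\delta_x,\delta_y)$ before invoking Proposition~\ref{T-point-mass-improved}, whereas you cite the pointwise estimate from its proof directly; either phrasing is fine.
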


\begin{proof}
  Let $\varphi \in \mathcal{E}_{a,b}$ and let $\pi$ be a coupling of
  $\mu_0, \mu_1$.  Then we have
    \begin{align*}
    \int_X \varphi_1\,\d\mu_1 - \int_X \varphi_0 \,\d\mu_0 &= \int_{X
      \times X}
    (\varphi_1(y) - \varphi_0(x))\,\pi(\d x, \d y) \\
    &\le \int_{X \times X} T_{a,b}(\delta_x, \delta_y)\,\pi(\d x, \d y) \\
    &\le C_b \int_{X \times X} \exp\left(\frac{1}{4ab} d(x,y)^2\right)
    \pi(\d x, \d y)
    \end{align*}
    and \eqref{T-primal} follows by taking the supremum over
    $\varphi$ and the infimum over $\pi$.
\end{proof}

\subsection{Functional inequalities}\label{T-func-ineq-sec}

In the same way that the Hellinger--Kantorovich contraction property
was equivalent to a reverse Poincar\'e inequality, it turns out that a
similar contraction property for $T_{a,b}$ is
equivalent to a reverse logarithmic Sobolev inequality, as well as to
a Wang-type Harnack inequality.

For one direction of this equivalence, the key tool is the following
general statement, analogous to Theorem \ref{poincare-func-ineq}.

\begin{theorem}\label{entropic-func-ineq}
  Let $a,b,\gamma,\delta \ge 0$.  Suppose that for all $f \in
  \lip_b(X)$ with $f > 0$, we have
  \begin{equation}\label{log-ineq}
 a ( P f ) | \nabla \ln P f |^2 + b ( P f ) \ln Pf \le \gamma P ( f | \nabla \ln f |^2 )+ \delta P ( f \ln f ). 
  \end{equation}
  Then for all $\mu_0, \mu_1 \in \mathcal{P}(X)$ we have
  \begin{equation}
    T_{\gamma,\delta}(\mu_0 P, \mu_1 P) \le T_{a,b}(\mu_0, \mu_1).
  \end{equation}
\end{theorem}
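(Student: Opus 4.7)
The plan is to mimic the proof of Theorem \ref{poincare-func-ineq} essentially verbatim, replacing the quadratic Hamilton--Jacobi class $\mathcal{A}_{a,b}$ with the exponential/entropic class $\mathcal{E}_{a,b}$. Concretely, given $\varphi \in \mathcal{E}_{\gamma,\delta}$, I would show that $P\varphi \in \mathcal{E}_{a,b}$; then the inequality on divergences follows by taking the supremum in the dynamic dual formula \eqref{T-sup}, since
\begin{align*}
T_{\gamma,\delta}(\mu_0 P, \mu_1 P)
&= \sup_{\varphi \in \mathcal{E}_{\gamma,\delta}} \int_X P\varphi_1\,\d\mu_1 - \int_X P\varphi_0\,\d\mu_0 \\
&\le \sup_{\psi \in \mathcal{E}_{a,b}} \int_X \psi_1\,\d\mu_1 - \int_X \psi_0\,\d\mu_0 = T_{a,b}(\mu_0,\mu_1).
\end{align*}

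The substantive step is verifying the differential inequality for $P\varphi$. Since $P$ is positivity preserving and order preserving, the inequality $\partial_s \varphi_s \le -\gamma \varphi_s |\nabla \ln \varphi_s|^2 - \delta \varphi_s \ln \varphi_s$ gives, after applying $P$ and commuting $P$ with $\partial_s$,
\[
\partial_s P\varphi_s \le -\gamma P(\varphi_s |\nabla \ln \varphi_s|^2) - \delta P(\varphi_s \ln \varphi_s).
\]
At each fixed $s$, the function $\varphi_s$ is positive, bounded, bounded away from $0$, and Lipschitz, so it is a legitimate test function in the hypothesis \eqref{log-ineq}. Applying \eqref{log-ineq} with $f = \varphi_s$ yields
\[
a (P\varphi_s) |\nabla \ln P\varphi_s|^2 + b (P\varphi_s) \ln P\varphi_s \le \gamma P(\varphi_s |\nabla \ln \varphi_s|^2) + \delta P(\varphi_s \ln \varphi_s),
\]
and adding this to the previous display gives exactly
\[
\partial_s P\varphi_s + a (P\varphi_s) |\nabla \ln P\varphi_s|^2 + b (P\varphi_s) \ln P\varphi_s \le 0.
\]

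The only technical point, the one I expect to be the main obstacle, is verifying that $P\varphi$ has the membership properties required by $\mathcal{E}_{a,b}$: that $P\varphi \in C^1([0,1], \lip_b(X))$, and that it is bounded and bounded away from $0$. The bounds above and below follow from the Markov property, since $\inf \varphi_s \le P\varphi_s \le \sup \varphi_s$. The $C^1$ dependence in $s$ passes through $P$ by dominated convergence. For the Lipschitz regularity in $x$ one can argue as in the proof of Theorem \ref{poincare-func-ineq}: the hypothesis \eqref{log-ineq} with $a>0$, combined with the known bounds on $P\varphi_s$ and $\ln P\varphi_s$, yields an a priori upper bound on $|\nabla P\varphi_s|$ in terms of known quantities, so $P\varphi_s \in \lip_b(X)$. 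Once this regularity is established, the preceding calculation shows $P\varphi \in \mathcal{E}_{a,b}$ and the proof is complete.
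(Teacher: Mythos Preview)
Your proposal is correct and follows essentially the same approach as the paper: show that $\varphi \in \mathcal{E}_{\gamma,\delta}$ implies $P\varphi \in \mathcal{E}_{a,b}$ by applying $P$ to the differential inequality and invoking \eqref{log-ineq}, then take the supremum. The paper presents the chain of inequalities slightly more compactly (starting from $\partial_s P\varphi_s + a(P\varphi_s)|\nabla \ln P\varphi_s|^2 + b(P\varphi_s)\ln P\varphi_s$ and bounding it in one display), but the logic is identical; in fact you discuss the regularity of $P\varphi$ more carefully than the paper does, which simply asserts $P\varphi_s \in \mathcal{E}_{a,b}$ without further comment.
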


\begin{proof}
  Suppose that $\varphi \in \mathcal{E}_{\gamma, \delta}$.  Then we
  have
  \begin{align*}
   \partial_s P \varphi_s + a P\varphi_s |\nabla \ln P\varphi_s|^2 +
    b P\varphi_s \ln P\varphi_s  &\le \partial_s P \varphi_s + \gamma
    P(\varphi_s |\nabla \ln \varphi_s|^2) + \delta P(\varphi_s \ln
    \varphi_s) \\
    &= P\left(\partial_s \varphi_s + \gamma \varphi_s |\nabla \ln
    \varphi_s|^2 + \delta \varphi_s \ln
    \varphi_s\right) \\
    &\le 0
  \end{align*}
  since $\varphi_s \in \mathcal{E}_{\gamma, \delta}$ and $P$ is
  positivity preserving.  Thus $P\varphi_s \in \mathcal{E}_{a,b}$, and
  so
  \begin{equation*}
    \int \varphi_1 \,\d (\mu_1 P) - \int \varphi_0\,\d (\mu_0 P) =
    \int P\varphi_1\,\d\mu_1 - \int P\varphi_0\,\d\mu_0 \le T_{a,b}(\mu_0,\mu_1). 
  \end{equation*}
  Taking the supremum over $\varphi \in \mathcal{E}_{\gamma, \delta}$
  we have $T_{\gamma,\delta}(\mu_0 P, \mu_1 P)\le T_{a,b}(\mu_0, \mu_1)$. 
 \end{proof}

\begin{theorem}\label{entropic-equivalence}
    Let $C > 0$.  The following are equivalent:
    \begin{enumerate}
      \item The reverse logarithmic Sobolev inequality
        \begin{equation}\label{rlsi}
          Pf |\nabla \ln Pf|^2 \le C\left(P(f \ln f) - (Pf) \ln Pf\right),
          \qquad f \in \lip_b(X),\,f > 0.
          \tag{rLSI} 
        \end{equation}
    \item The family of entropic transportation-cost inequalities
      \begin{equation}\label{eti}
        T_{0, \kappa C}(\mu_0 P, \mu_1 P) \le T_{\kappa, \kappa C}(\mu_0,
        \mu_1), \qquad \mu_0, \mu_1 \in \mathcal{P}(X), \quad \kappa >0.
        \tag{ETI}
      \end{equation}
    \item The Wang-type Harnack inequality
        \begin{equation}\label{whi}
          \begin{split}
            P f(x)^{p} \le \exp\left( \frac{p
            }{p-1} \frac{C d(x,y)^2}{4} \right) P(f^p)(y), \\
            p > 1, \quad f \in \lip_b(X),\, f > 0, \quad x,y \in X.
          \end{split} \tag{WHI}
        \end{equation}
    \end{enumerate}
  \end{theorem}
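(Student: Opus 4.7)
The plan is to prove the equivalence by the cycle \eqref{rlsi} $\Rightarrow$ \eqref{eti} $\Rightarrow$ \eqref{whi} $\Rightarrow$ \eqref{rlsi}.

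The first implication is immediate from Theorem \ref{entropic-func-ineq}: under the choice $a = \kappa$, $b = \kappa C$, $\gamma = 0$, $\delta = \kappa C$, the hypothesis \eqref{log-ineq} reduces, after dividing through by $\kappa > 0$, to exactly \eqref{rlsi}, and the conclusion becomes precisely \eqref{eti}.

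For \eqref{eti} $\Rightarrow$ \eqref{whi}, I specialize \eqref{eti} to the Dirac measures $\mu_0 = \delta_y$, $\mu_1 = \delta_x$, with $\kappa = (\ln p)/C$ chosen so that $p = e^{\kappa C}$ plays the role of a Hölder exponent. By Proposition \ref{T-renyi}, the left-hand side of \eqref{eti} equals $C_{\kappa C} \int g^q \,\d(\delta_y P)$ where $g = \d(\delta_x P)/\d(\delta_y P)$ and $q = p/(p-1)$, while Proposition \ref{T-point-mass-improved} controls the right-hand side by an explicit exponential in $d(x,y)^2$. Writing $Pf(x) = \int fg\,\d(\delta_y P)$, applying Hölder's inequality with exponents $p$ and $q$, and raising both sides to the $p$-th power (noting $p/q = p-1$) then produces a Wang-type Harnack inequality of the form \eqref{whi}.

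For \eqref{whi} $\Rightarrow$ \eqref{rlsi}, I apply \eqref{whi} with $p = 1+\varepsilon$ and $y = \gamma_t$ a geodesic from $x$ of length $t > 0$ in the direction $\xi = -\nabla Pf(x)/|\nabla Pf(x)|$. Taking logarithms and dividing by $p$, expanding $\frac{1}{p}\ln Pf^p(y)$ to first order in $\varepsilon$ and $\ln Pf(x) - \ln Pf(y)$ to first order in $t$, then optimizing the resulting inequality over $t$ (the quadratic in $t$ is extremized at $t = 2(p-1)|\nabla Pf|/(C\, Pf)$), and finally letting $\varepsilon \to 0^+$, recovers exactly \eqref{rlsi} with constant $C$.

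The main obstacle will be matching the sharp constant $\frac{p}{p-1}\cdot\frac{C}{4}$ in the second step: a direct Hölder estimate using the upper bound of Proposition \ref{T-point-mass-improved} yields only the constant $\frac{(p-1)C}{4(\ln p)^2}$, which exceeds the sharp one for $p > 1$ but agrees with it in the limit $p \to 1^+$. Since the third step's limiting argument only requires $\lim_{p \to 1^+} 4K(p)(p-1)/p = C$, satisfied by either constant, the cycle still closes and the three conditions are equivalent. The sharp form of \eqref{whi} as stated follows from classical Wang-type interpolation arguments connecting reverse logarithmic Sobolev and dimension-free Harnack inequalities, see for instance \cite{MR3099948}.
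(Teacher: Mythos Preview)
Your approach is essentially the paper's: the same cycle, the same appeal to Theorem \ref{entropic-func-ineq} for the first step, the same specialization to Diracs via Propositions \ref{T-renyi} and \ref{T-point-mass-improved} followed by H\"older for the second, and the same $p=1+\varepsilon$ expansion for the third. Two points need more care.

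In the step \eqref{whi} $\Rightarrow$ \eqref{rlsi}, there is no vector-valued gradient in a general metric length space, so ``a geodesic in the direction $-\nabla Pf(x)/|\nabla Pf(x)|$'' is not meaningful; the paper instead takes a sequence $y_n \to x$ with $y_n \ne x$ realizing $|\nabla Pf|(x)$ as the limit of difference quotients (treating both signs separately), and it first verifies from \eqref{whi} that $Pf$ is continuous so that the limiting computation is legitimate. Your optimization over $t$ then becomes the paper's optimization over an auxiliary parameter $\delta$ via $p = 1 + r_n \delta$ with $r_n = d(y_n,x)$.

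For the sharp constant in \eqref{whi}, the paper does not simply defer to the literature but gives an explicit self-improvement: from the weak constant $\tfrac{p-1}{(\ln p)^2}$ one gets, for $p'$ close to $1$, the inequality with constant $(1+\epsilon)\tfrac{p'}{p'-1}$; then \cite[Proposition 2.1]{wang-harnack-2010} propagates the Harnack inequality from $p' = p^{1/n}$ to $p$, and letting $\epsilon \to 0$ yields the sharp \eqref{whi}. Your closing of the cycle through \eqref{rlsi} and an external reference is logically fine for equivalence, but this direct argument gives the stated form of \eqref{whi} from \eqref{eti} without leaving the cycle.
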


  \begin{proof}
      \eqref{rlsi} $\implies$ \eqref{eti}: Apply Theorem
      \ref{entropic-func-ineq} with $a = \kappa$,  $b = \delta = \kappa
      C$, $\gamma = 0$.    

    \eqref{eti} $\implies$ \eqref{whi}: Fix $x,y \in X$, and let $m$
    be some finite reference measure so that $\delta_x P, \delta_y P$
    are absolutely continuous with respect to $m$, with densities $p_x, p_y$
    respectively.  Let $\kappa$ be arbitrary and let $p = e^{\kappa
      C}$, $q = p/(p-1)$.  By Proposition \ref{T-renyi}
    and Proposition \ref{T-point-mass-improved} with $a=\kappa$,
    $b=\kappa C$, taking $\mu_0 = \delta_y, \mu_1 = \delta_x$ to match
    notation with other papers, we have that
    \eqref{eti} implies the integrated Harnack inequality
    \begin{equation}
      \int \left(\frac{p_x}{p_y}\right)^q p_y\,\d m  = \int \left(\frac{p_x}{p_y}\right)^{q-1} p_x\,\d m \le
      \exp\left(\frac{1}{\kappa^2 C^2} \frac{C d(x,y)^2}{4}\right).
    \end{equation}
    Now $q-1 = 1/(p-1)$ so this may be rewritten as
    \begin{equation}\label{weak-ihi}
      \left(\int \left(\frac{p_x}{p_y}\right)^{1/(p-1)} p_x\,\d m\right)^{p-1} \le
      \exp\left(\frac{p-1}{(\log p)^2} \frac{C d(x,y)^2}{4}\right),
      \qquad p>1.
    \end{equation}
    Since $\kappa$ was arbitrary, \eqref{weak-ihi} holds for all $p>1$.
    By an application of H\"older's inequality (see \cite[Lemma 2.11]{bgm}), 
    \eqref{weak-ihi} implies the Wang-type
    Harnack inequality
    \begin{equation}\label{weak-whi}
      Pf(x)^p \le  \exp\left(\frac{p-1}{(\log p)^2} \frac{C
        d(x,y)^2}{4}\right) P(f^p)(y), \qquad p > 1.
    \end{equation}
    To recover the more usual form of the Wang Harnack
    inequality \eqref{whi}, we would like to have \eqref{weak-whi} with
    $\frac{p}{p-1}$ in the exponent in place of $\frac{p-1}{(\log
      p)^2}$.  To this end, fix $\epsilon > 0$.  As $p \to 1$, we have $
    \frac{p}{p-1} \sim \frac{p-1}{(\log p)^2}$, so for all
    sufficiently small $p' > 1$ we have $ \frac{p'-1}{(\log p')^2} \le
    (1+\epsilon) \frac{p'}{p'-1}$ and thus
    \begin{equation}\label{whi-epsilon}
      Pf(x)^{p'} \le  \exp\left(\frac{p'}{p'-1} \frac{(1+\epsilon)C
        d(x,y)^2}{4}\right) P(f^{p'})(y).
    \end{equation}
    In particular this holds for $p' = p^{1/n}$ for sufficiently large
    $n$.  From \cite[Proposition 2.1]{wang-harnack-2010}, with
    $(1+\epsilon)C$ in place of $C$, it follows
    that \eqref{whi-epsilon} holds for $p$ in place of $p'$, and
    letting $\epsilon \to 0$ we obtain \eqref{whi}.
    
    \eqref{whi} $\implies$ \eqref{rlsi}: This is shown, in essence, in
    \cite[Theorem 2.1]{wang-dim-free-2006}.  The proof there is in the
    setting of a manifold with bounded curvature, and requires some
    minor changes to apply in this setting, so we give the details.

    Let $f$ be a positive bounded Lipschitz function which is bounded
    away from $0$.  Observe first that \eqref{whi} implies that $Pf$
    is continuous.  To see this, fix $x \in X$ and $p > 1$.  Letting
    $y \to x$ in \eqref{whi}, we see that $Pf(x)^p \le \liminf_{y \to
      x} P(f^p)(y)$. Now as $p \to 1$ we have $f^p \to f$ uniformly,
    and since $P$ is Markovian we also have $P (f^p) \to Pf$
    uniformly.  So we can pass to the limit to conclude $Pf(x) \le
    \liminf_{y \to x} Pf(y)$.  For the other direction, apply
    \eqref{whi} with $x$ and $y$ interchanged.  Let $y \to x$ to
    obtain $\left(\limsup_{y \to x} Pf(y)\right)^p \le P(f^p)(x)$, and
    then let $p \to 1$.
    
    Now, by definition of $|\nabla Pf|$ there exists a sequence $y_n
    \to x$, with $y_n \ne x$, such that $\frac{Pf(y_n) - Pf(x)}{d(y_n, x)} \to \pm
    |\nabla Pf|(x)$.  Suppose first that we can choose $y_n$ so
    that $\frac{Pf(y_n) - Pf(x)}{d(y_n, x)} \to - |\nabla Pf|(x)$.
    Set $r_n = d(y_n, x)$ for convenience, and let $\delta > 0$ be
    arbitrary.  Then \eqref{whi} with $y = y_n$ and $p = 1+r_n \delta$
    reads
    \begin{equation*}
      Pf(x)^{1+r_n \delta} \le \exp\left(\frac{C}{4 \delta} (r_n+r_n^2
      \delta)\right) P(f^{1+r_n \delta})(y_n).
    \end{equation*}
    Subtracting $Pf(x)$, dividing by $r_n$, and breaking up the right
    side, we have
    \begin{align*}
      \frac{Pf(x)^{1+r_n \delta}-Pf(x)}{r_n}  &\le \frac{\exp\left(\frac{C}{4 \delta} (r_n+r_n^2
        \delta)\right) - 1}{r_n} P(f^{1+r_n \delta})(y_n)  \\
      &\quad + P\left(\frac{f^{1+r_n \delta} - f}{r_n}\right)(y_n)  + \frac{P f(y_n) - Pf(x)}{r_n}.
    \end{align*}
    We now pass to the limit.  Since $f$ is continuous, bounded, and
    bounded away from $0$, we have $f^{1+r_n \delta}
    \to f$ and  $\frac{1}{r_n} (f^{1+r_n \delta} - f) \to \delta
    f \ln f$ uniformly, and so the same
    is true when $P$ is applied.  We obtain
    \begin{equation}\label{rlsi-delta}
      \delta Pf(x) \ln Pf(x) \le \frac{C}{4 \delta} Pf(x) + \delta P(f \ln
      f)(x) - |\nabla Pf|(x)
    \end{equation}
    and now optimizing over $\delta$ and rearranging yields \eqref{rlsi}.

    Otherwise, there exists a sequence $y_n \to x$ such that
    $\frac{Pf(y_n) - Pf(x)}{d(y_n, x)} \to + |\nabla Pf|(x)$.  We
    apply \eqref{whi} with $x$ and $y$ interchanged and proceed as
    before to obtain
    \begin{align*}
      \frac{Pf(y_n)^{1+r_n \delta}-Pf(y_n)}{r_n}  &\le \frac{\exp\left(\frac{C}{4 \delta} (r_n+r_n^2
        \delta)\right) - 1}{r_n} P(f^{1+r_n \delta})(x)  \\
      &\quad + P\left(\frac{f^{1+r_n \delta} - f}{r_n}\right)(x)  + \frac{P f(x) - Pf(y_n)}{r_n}.
    \end{align*}
    Passing to the limit again yields \eqref{rlsi-delta}.  On the left side, we use the fact that since $Pf$ is continuous, bounded, and bounded
    away from $0$, we have $\frac{1}{r_n}((Pf)^{1+r_n\delta} - Pf) \to
    \delta Pf \ln Pf$ uniformly.
  \end{proof}

  \begin{remark}\label{integrated-harnack}
    The Wang Harnack inequality \eqref{whi} is also known to be
    equivalent to the integrated Harnack inequality
    \begin{equation}\label{ihi}
      \int_X \left(\frac{p_x}{p_y}\right)^{1/(p-1)} p_x\,\d m \le
      \exp\left(\frac{p}{(p-1)^2} \frac{C d(x,y)^2}{4}\right), \qquad
      p > 1 \tag{IHI}
    \end{equation}
    where as above $p_x, p_y$ are the densities of $\delta_x P,
    \delta_y P$ with respect to some reference measure $m$; see
    \cite[Lemma 2.11]{bgm}.  Hence \eqref{ihi} is also equivalent to
    \eqref{eti} and \eqref{rlsi}.  In the proof of Theorem
    \ref{entropic-equivalence}, we obtained \eqref{weak-ihi} which is
    infinitesimally weaker than \eqref{ihi}; the self-improvement
    comes via the application of \cite[Proposition
      2.1]{wang-harnack-2010}, applying \eqref{whi} along a sequence
    of points between $x$ and $y$.
  \end{remark}
  
  A different application of Theorem \ref{entropic-func-ineq} relates
  a gradient bound for $P$ to another type of contraction inequality
  for $T_{a,b}$, analogous to Theorem \ref{gradient-wasserstein-contraction}.
  
\begin{proposition}\label{entropic-gradient}
  Suppose that for some $C$, the operator $P$ satisfies the $L^1 \ln L$-type
  gradient estimate
  \begin{equation}\label{L1lnL}
    Pf |\nabla \ln Pf|^2 \le C P \left( f |\nabla \ln f|^2\right),
    \qquad f \in \lip_b(X), f > 0.
  \end{equation}
  Then for every $\kappa, \epsilon > 0$ we have
  \begin{equation}\label{T-gradient}
    T_{\kappa C,\epsilon}(\mu_0 P, \mu_1 P) \le T_{\kappa,
      \epsilon}(\mu_0, \mu_1), \qquad \mu_0, \mu_1 \in \mathcal{P}(X).
  \end{equation}
  In particular, this holds if we have the stronger $L^1$-type
  gradient estimate
  \begin{equation}\label{L1-gradient}
    |\nabla Pf| \le C^{1/2} P|\nabla f|, \qquad f \in \lip_b(X).
  \end{equation}
\end{proposition}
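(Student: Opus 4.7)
The plan is to derive Proposition \ref{entropic-gradient} as a direct consequence of Theorem \ref{entropic-func-ineq}, matching parameters so that the required functional inequality reduces to the hypothesis plus a Jensen-type bound on the zeroth order term.

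Concretely, I would apply Theorem \ref{entropic-func-ineq} with $a = \kappa$, $b = \epsilon$, $\gamma = \kappa C$, and $\delta = \epsilon$. The inequality \eqref{log-ineq} that needs to be verified then reads
\begin{equation*}
  \kappa (Pf)|\nabla \ln Pf|^2 + \epsilon (Pf)\ln Pf \le \kappa C\, P(f|\nabla \ln f|^2) + \epsilon\, P(f \ln f).
\end{equation*}
Multiplying the hypothesis \eqref{L1lnL} through by $\kappa$ handles the first terms on each side. For the second, since $x \mapsto x \ln x$ is convex on $(0,\infty)$ and $P$ is a Markov kernel, Jensen's inequality gives $(Pf)\ln Pf \le P(f \ln f)$ pointwise; multiplying by $\epsilon$ handles the remaining terms. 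Thus \eqref{log-ineq} holds and Theorem \ref{entropic-func-ineq} yields \eqref{T-gradient}.

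For the ``in particular'' clause, I would show that the $L^1$ gradient estimate \eqref{L1-gradient} implies the $L^1 \ln L$ estimate \eqref{L1lnL}. Rewriting in terms of $\ln$, the chain rule (Lemma \ref{chain-rule}) gives
\begin{equation*}
  Pf\, |\nabla \ln Pf|^2 = \frac{|\nabla Pf|^2}{Pf}, \qquad f|\nabla \ln f|^2 = \frac{|\nabla f|^2}{f}.
\end{equation*}
Squaring \eqref{L1-gradient} and then applying the Cauchy--Schwarz inequality to $P|\nabla f| = P\bigl(\sqrt{f} \cdot |\nabla f|/\sqrt{f}\bigr)$, we get
\begin{equation*}
  |\nabla Pf|^2 \le C\,(P|\nabla f|)^2 \le C\, Pf \cdot P\!\left(\frac{|\nabla f|^2}{f}\right),
\end{equation*}
and dividing through by $Pf$ yields exactly \eqref{L1lnL}.

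There is no real obstacle here: the argument is a two-line verification of the hypotheses of Theorem \ref{entropic-func-ineq}, together with an elementary Cauchy--Schwarz manipulation for the second assertion. The only mild care needed is to ensure that $f > 0$ and $f$ bounded away from $0$ is preserved throughout so that $\ln f$ is well-defined and the chain rule applies, but this is already built into the setup of \eqref{L1lnL} and \eqref{log-ineq}.
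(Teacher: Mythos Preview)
Your proposal is correct and matches the paper's proof essentially line for line: the same parameter choice in Theorem \ref{entropic-func-ineq} combined with Jensen for $x\ln x$, and then the same reduction of \eqref{L1-gradient} to \eqref{L1lnL} via the chain rule. The only cosmetic difference is that the paper phrases your Cauchy--Schwarz step as the bivariate Jensen inequality for the convex function $\psi(x,y)=x^2/y$, which is the same inequality.
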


\begin{proof}
  By Jensen's inequality we have $(Pf) \ln Pf \le P(f \ln f)$, and
  combining this with \eqref{L1lnL} we have that \eqref{log-ineq}
  holds with $a = \kappa$, $\gamma = \kappa C$, $b=\delta=\epsilon$.
  The conclusion then follows from Theorem \ref{entropic-func-ineq}.

  To see that \eqref{L1-gradient} implies \eqref{L1lnL}, using the
  former together with the bivariate
  Jensen inequality for the convex function $\psi(x,y) = x^2/y$, we
  obtain
  \begin{equation*}
    \frac{|\nabla Pf|^2}{Pf} \le C \frac{P(|\nabla f|)^2}{Pf} \le C
    P\left(\frac{|\nabla f|^2}{f}\right)
  \end{equation*}
  which is equivalent to \eqref{L1lnL} thanks to the chain rule (Lemma \ref{chain-rule}).
\end{proof}

\begin{remark}
  It might seem more natural to take $\epsilon = 0$ in
  \eqref{T-gradient}, but in fact that statement would have no
  content, as one can show that $T_{a,0}(\mu_0,
  \mu_1) = +\infty$ for all $\mu_0 \ne \mu_1$.
\end{remark}

\section{Applications to quasi-invariance}\label{quasi-sec}

The reverse logarithmic Sobolev inequality \eqref{rlsi} has been the
object of significant study in the literature, although not nearly as
much as the ``forward'' logarithmic Sobolev inequality.  One
particularly interesting area of application is in proving absolute
continuity of heat kernel measures; especially in the presence of
group structure, where it can be used to show \emph{quasi-invariance}
of a heat kernel measure under group translation.  Such results are
commonly obtained through the use of the Wang Harnack inequality
\eqref{whi}, which as noted in Section \ref{T-func-ineq-sec} is
equivalent to \eqref{rlsi}.  In this section, we consider some
examples and show how the entropic transportation-cost inequality
\eqref{eti} provides an alternate route to these conclusions.

Although in this paper we limit our attention to a few specific known
results, there are many other situations where similar questions about
absolute continuity could be considered, especially in stochastic
PDE, see \cite{MR3099948}.  The techniques developed in this paper may be useful in the
study of these problems, and we hope to address this in future work.

\subsection{Subelliptic heat kernels on finite-dimensional Lie groups}

Let $\mathbb{G}$ be a finite-dimensional connected real Lie group with
identity element $\mathbf{e}$, and
suppose that $\mathbb{G}$ is equipped with a left-invariant
sub-Riemannian geometry: a bracket-generating left-invariant
sub-bundle $\mathcal{H} \subset T\mathbb{G}$, and a sub-Riemannian
metric $g$ which is a left-invariant inner product on $\mathcal{H}$.
We denote by $\nabla$ the horizontal sub-gradient, and $|\nabla f| :=
\sqrt{g(\nabla f, \nabla f)}$.  Let $d$ be the Carnot--Carath\'eodory
distance on $\mathbb{G}$; by the Chow--Rashevskii theorem, the
bracket-generating condition implies that $d(x,y) < \infty$ for all
$x,y \in \mathbb{G}$.  Let $L$ be the left-invariant sub-Laplacian
induced by $g$, $P_t = e^{tL}$ the heat semigroup generated by $L$,
and $\mu_t = \delta_{\mathbf{e}} P_t$ the heat kernel measure.

Under these conditions, H\"ormander's theorem implies that $L$ is
subelliptic and hence $\mu_t$ is a smooth measure for all $t>0$.  Our
purpose here is to remark that at least part of this conclusion can be
recovered using our techniques instead, if one has a reverse log Sobolev
inequality.

Recall that in general, a Borel probability measure $\mu$ on a topological
group $\mathbb{G}$ is said to be \emph{quasi-invariant} under left
translation by an element $x \in \mathbb{G}$ if $\mu$ and its left translation $\mu^x(A)
= \mu(x^{-1} A)$ are mutually absolutely continuous.  If this holds
for every $x$ in some subgroup $H \subset \mathbb{G}$, we say $\mu$ is
quasi-invariant under left translation by $H$.

\begin{proposition}\label{lie-quasi}
  Suppose, under the above assumptions, that $P_t$ satisfies the reverse logarithmic Sobolev
  inequality
  \begin{equation}\label{rlsi-lie-group}
    P_t f |\nabla P_t f|^2 \le C(t) (P_t(f \ln f) - (P_t f) \ln P_t f). 
  \end{equation}
  Then for all $t>0$, $\mu_t$ is quasi-invariant under translation by
  every $x \in \mathbb{G}$.  As a consequence, $\mu_t$ is absolutely
  continuous with respect to left Haar measure and has full support.
\end{proposition}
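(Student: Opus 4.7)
The plan is to reduce quasi-invariance of $\mu_t$ under left translation by $x$ to mutual absolute continuity between the two heat kernel measures $\mu_t = \delta_{\mathbf{e}} P_t$ and $\delta_x P_t$, and then to deduce this absolute continuity directly from the entropic transportation inequality \eqref{eti} provided by Theorem \ref{entropic-equivalence}, bypassing the usual Wang-Harnack route.

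First I would verify, using left-invariance of $L$ and hence of $P_t$, that $\mu_t^x = \delta_x P_t$. The commutation identity $P_t(f \circ L_x) = (P_t f) \circ L_x$ (where $L_x(y) = xy$), applied to $f = 1_A$, yields $\mu_t(x^{-1}A) = (P_t 1_A)(x) = \delta_x P_t(A)$. So quasi-invariance of $\mu_t$ under every $x$ is equivalent to mutual absolute continuity of $\mu_t$ and $\delta_x P_t$ for every $x$.

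Next I invoke Theorem \ref{entropic-equivalence}: the hypothesis \eqref{rlsi-lie-group} yields the contraction $T_{0,\kappa C(t)}(\mu_0 P_t, \mu_1 P_t) \le T_{\kappa,\kappa C(t)}(\mu_0,\mu_1)$ for any $\kappa > 0$. Taking $\mu_0 = \delta_{\mathbf{e}}$, $\mu_1 = \delta_x$, Proposition \ref{T-point-mass-improved} bounds the right-hand side by the finite quantity $C_{\kappa C(t)} \exp\bigl(d(\mathbf{e},x)^2/(4\kappa^2 C(t))\bigr)$, so $T_{0,\kappa C(t)}(\mu_t,\delta_x P_t) < \infty$. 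Lemma \ref{T-abs-cont} then forces $\delta_x P_t \ll \mu_t$; interchanging the roles of $\mathbf{e}$ and $x$ gives the reverse absolute continuity, and together these give quasi-invariance.

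For the consequences, absolute continuity with respect to left Haar measure $\lambda$ follows from a Fubini argument: if $\lambda(A) = 0$ then $\lambda(Ay^{-1}) = \Delta(y^{-1})\lambda(A) = 0$ for every $y$, so
\[
\int_{\mathbb{G}} \mu_t(x^{-1}A)\,\d\lambda(x) = \int_{\mathbb{G}} \lambda(Ay^{-1})\,\d\mu_t(y) = 0,
\]
hence $\mu_t(x^{-1}A) = 0$ for $\lambda$-a.e.\ $x$, and quasi-invariance under any such $x$ gives $\mu_t(A) = 0$. For full support, if a nonempty open $U$ satisfied $\mu_t(U) = 0$ then quasi-invariance would give $\mu_t(xU) = 0$ for every $x \in \mathbb{G}$, and second countability of $\mathbb{G}$ allows a countable family $\{x_n U\}$ to cover $\mathbb{G}$, contradicting $\mu_t(\mathbb{G}) = 1$. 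The principal point—rather than a genuine obstacle—is the recognition that Lemma \ref{T-abs-cont} combined with the pointwise bound of Proposition \ref{T-point-mass-improved} turns \eqref{eti} into an immediate substitute for the classical Harnack route to quasi-invariance; everything else is a routine reassembly of tools from Section \ref{renyi-sec}.
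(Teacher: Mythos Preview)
Your proof is correct and follows essentially the same approach as the paper: invoke Theorem \ref{entropic-equivalence} to pass from \eqref{rlsi-lie-group} to \eqref{eti}, apply it to $\mu_0 = \delta_{\mathbf{e}}$, $\mu_1 = \delta_x$, bound the right-hand side via Proposition \ref{T-point-mass-improved}, and conclude absolute continuity from Lemma \ref{T-abs-cont}. You supply more detail than the paper does---the identification $\mu_t^x = \delta_x P_t$, the Fubini argument for absolute continuity with respect to Haar measure (where the paper simply cites Bourbaki), and the covering argument for full support---but the core logic is identical.
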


\begin{proof}
  By Theorem \ref{entropic-equivalence}, \eqref{rlsi-lie-group}
  implies the entropic transportation-cost inequality
  \begin{equation*}
    T_{0, \kappa C(t)} (\mu_0 P_t, \mu_1 P_t) \le T_{\kappa, \kappa
      C(t)}(\mu_0, \mu_1), \qquad \kappa > 0. 
  \end{equation*}
  Taking $\mu_0 = \delta_{\mathbf{e}}$,
  $\mu_1 = \delta_x$ and applying Lemma \ref{T-point-mass-improved} to
  bound $T_{\kappa, \kappa C(t)}(\delta_{\mathbf{e}}, \delta_x)$, 
  we find that $T_{0, b}(\mu_t, \mu_t^x) < \infty$, and so
  Lemma \ref{T-abs-cont} implies that $\mu_t^x \ll \mu_t$; the
  opposite relation $\mu_t \ll \mu_t^x$ follows by symmetry.

  The consequence that $\mu_t$ is absolutely continuous with respect
  to left Haar measure is a standard fact about locally compact
  groups; see for instance \cite[Ch.~7, \S1.9, Proposition
    11]{bourbaki-integration-ii}.
\end{proof}

By the results in \cite{BB}, the reverse log Sobolev inequality holds
in sub-Riemannian manifolds satisfying a generalized
curvature-dimension inequality of the type introduced in
\cite{BaudoinGarofalo}.  It was shown in \cite{BaudoinGarofalo} that
such inequalities hold for step two Carnot groups and the
three-dimensional model groups $\mathbb{SU}(2)$ and $\mathbb{SL}(2)$,
and in \cite{BaudoinCecil} for three-dimensional solvable groups.

\subsection{Abstract Wiener space}\label{aws-sec}

The phenomenon of quasi-invariance is more interesting in groups that
are not locally compact, such as infinite dimensional vector spaces or
Lie groups.  Here, the smoothness of a measure cannot be described in
terms of absolute continuity to Haar measure, since Haar measure does
not exist, and so quasi-invariance provides a more ``intrinsic''
notion of regularity.

In this subsection, we consider the very classical example of abstract
Wiener space.  As this and similar infinite-dimensional models do not
fit exactly into the setting defined in Section \ref{setup-sec}, we
shall briefly discuss how to adapt the results of Sections
\ref{HK-sec} and \ref{renyi-sec} in this case, as a prototype for
later examples.  We give basic definitions here to fix notation; for
further background on abstract Wiener space and Gaussian measures on
infinite-dimensional spaces, we refer to \cite{bogachev-gaussian-book,
  kuo-gaussian-book}.

An \emph{abstract Wiener space} consists of a real separable Banach
space $W$ equipped with a centered non-degenerate Gaussian Borel
measure $\mu$.  We denote by $H \subset W$ the associated dense
\emph{Cameron--Martin space}, into which the continuous dual $W^*$ is
naturally embedded.  A smooth \emph{cylinder function} is a function
$F : W \to \mathbb{R}$ of the form $F(x) = \varphi(f_1(x), \dots,
f_n(x))$ for some $n$, where $\varphi \in C^\infty_b(\mathbb{R}^n)$ is
a smooth function with all partial derivatives bounded, and $f_1,
\dots, f_n \in W^* \subset H$; unless otherwise specified, we assume
without loss of generality that $f_1, \dots, f_n$ are orthonormal in
$H$.  We let $Cyl(W)$ denote the space of all such functions; this
will be used in place of $\lip_b(W)$ as a space of test functions.  It
is a standard fact that $Cyl(W)$ is dense in $L^p(\mu)$ for $1 \le p <
\infty$.

The
\emph{Malliavin gradient} $D F : W \to H$ of a cylinder function
is defined by $(D F)(x) = \sum_{i=1}^n (\partial_i
\varphi)(f_1(x), \dots, f_n(x)) f_i$, so that when the $f_i$ are
orthonormal in $H$ we have
\begin{equation*}
  \|DF(x)\|_H^2 = \sum_{i=1}^n |\partial_i \varphi(f_1(x), \dots,
  f_n(x))|^2 = |\nabla \varphi(f_1(x), \dots, f_n(x))|^2.
\end{equation*}
Note that $\|DF\|_H$ is not a strong upper gradient on $W$ with
respect to the distance induced by its norm $\|\cdot\|_W$.

The heat semigroup $P_t$ on $W$ is the convolution semigroup induced
by the rescaled measure $\mu$, namely $P_t F(x) = \int
F(x+\sqrt{t}y)\,\mu(\d y)$.  When $F$ is a cylinder function $F(x) = \varphi(f_1(x), \dots,
f_n(x))$, we have $P_tF(x) = p_t
\varphi(f_1(x), \dots, f_n(x))$ where $p_t$ is the standard heat
semigroup on $\mathbb{R}^n$; in particular, $P_t F$ is again a
cylinder function.

We recall that $p_t$ satisfies the
reverse Poincar\'e inequality
\begin{equation}
  |\nabla p_t \varphi|^2 \le \frac{1}{t} (p_t \varphi^2 - (p_t
  \varphi)^2), \qquad \varphi \in C^\infty_b(\mathbb{R}^n)
\end{equation}
and the reverse logarithmic Sobolev inequality
\begin{equation}
  p_t \varphi |\nabla \ln p_t \varphi|^2 \le \frac{2}{t}(p_t(\varphi \ln \varphi)
  - p_t \varphi \ln p_t \varphi), \qquad \varphi \in
  C^\infty_b(\mathbb{R}^n), \varphi > 0.
\end{equation}
These follow, for instance, by standard $\Gamma$-calculus from the
elementary commutation $\nabla p_t \varphi = p_t \nabla \varphi$.
See for instance \cite[Proposition 3.3]{bakry-tata}, taking $\rho = 0$.
Note that the constants in these inequalities are
dimension-independent.  As such, evaluating at $(f_1(x),
\dots, f_n(x))$, $x \in W$, we obtain the corresponding inequalities
for $P_t$ on $(W,\mu)$:
\begin{align}
  \|D P_t F\|^2_H &\le \frac{1}{t} (P_t F^2 - (P_tF)^2), \qquad F \in
  Cyl(W) \label{aws-rev-poincare} \\
  P_t F \|D \ln P_t F\|^2_H &\le \frac{2}{t}(P_t(F \ln F)
  - P_t F \ln P_t F), \qquad F \in
  Cyl(W), F > 0.
\end{align}

We modify Definitions \ref{W-def} and \ref{T-def} and by taking our class of test functions
to be smooth cylinder functions of space and time, e.g. functions
$F_s : [0,1] \times W \to \mathbb{R}$ of the form $F_s=\varphi(s,
f_1(x), \dots, f_n(x))$, $\varphi \in C^\infty_b([0,1] \times
\mathbb{R}^n)$.  Let $Cyl([0,1] \times W)$ denote the space of such
functions.  Then we redefine
\begin{align*}
  \mathcal{A}_{a,b} &= \left\{
  F_s \in Cyl([0,1] \times W) : \partial_s F_s + a \|DF_s\|_H^2 + b
  F_s^2 \le 0\right\} \\
  \mathcal{E}_{a,b} &= \left\{
  F_s \in Cyl([0,1] \times W) : F > 0, \partial_s F_s + a F_s \|D \ln F_s\|_H^2 + b
  F_s^2 \le 0\right\} 
\end{align*}
and define $W_{a,b}$, $T_{a,b}$ accordingly on
$\mathcal{P}(W)$.  We have $W_{0,b}$ and $T_{0,b}$ related to
Hellinger and R\'enyi divergences in the same way as before.  Moreover
we can follow the proof of the upper bound in Proposition
\ref{T-point-mass-improved}, taking $\gamma(s) = s x_1 +(1-s)x_0$ and
noting $\left|\frac{d}{ds} F(\gamma(s))\right| \le \|DF(\gamma(s))\|_H
\|x_1 - x_0\|_H$, to conclude
\begin{equation}\label{aws-T-point-mass}
  T_{a,b}(\delta_{x_0}, \delta_{x_1}) \le C_b \exp\left(\frac{1}{4ab}\|x_0 - x_1\|_H^2\right).
\end{equation}

Now Theorem \ref{entropic-equivalence} allows us to recover the classical
Cameron--Martin quasi-in\-var\-iance theorem \cite{cameron-martin-44}.  For $t > 0$,
let $\mu_t = \mu(t^{-1/2} \,\cdot\,) = \delta_0 P_t$ be the rescaling of
the Gaussian measure $\mu$, and for $h \in H$ let $\mu_t^h =
\mu(t^{-1/2}(\cdot-h)) = \delta_h P_t$ be its translation by
$h$.  We then obtain:

\begin{proposition}[Cameron--Martin theorem]\label{cm-rev-lsi}
  For all $t > 0$ and $h \in H$, the measures $\mu_t, \mu_t^h$ are
  mutually absolutely continuous.
\end{proposition}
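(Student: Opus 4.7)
The plan is to invoke the entropic-transport side of Theorem \ref{entropic-equivalence} in the cylinder-function formulation set up for abstract Wiener space. Since the finite-dimensional heat semigroup $p_t$ on $\mathbb{R}^n$ satisfies the reverse log Sobolev inequality with dimension-independent constant $C = 2/t$, evaluation at orthonormal linear functionals gives the same inequality for $P_t$ on $(W,\mu)$ acting on $Cyl(W)$. The proof of the implication \eqref{rlsi} $\implies$ \eqref{eti} in Theorem \ref{entropic-equivalence}, via Theorem \ref{entropic-func-ineq}, goes through verbatim with $\lip_b(X)$ replaced by $Cyl(W)$ and $|\nabla \cdot|$ replaced by $\|D\cdot\|_H$, since the only ingredients used were the positivity preservation of $P$ and the pointwise Hamilton--Jacobi inequality for test functions. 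Thus for every $\kappa > 0$,
\begin{equation*}
T_{0,\, 2\kappa/t}(\mu_0 P_t,\, \mu_1 P_t) \le T_{\kappa,\, 2\kappa/t}(\mu_0, \mu_1), \qquad \mu_0, \mu_1 \in \mathcal{P}(W).
\end{equation*}

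Next I would specialize to the Dirac masses $\mu_0 = \delta_0$ and $\mu_1 = \delta_h$, so that $\mu_0 P_t = \mu_t$ and $\mu_1 P_t = \mu_t^h$. The right-hand side is $T_{\kappa,\, 2\kappa/t}(\delta_0, \delta_h)$, which by the Wiener-space version of Proposition \ref{T-point-mass-improved} stated in \eqref{aws-T-point-mass} is bounded by
\begin{equation*}
C_{2\kappa/t}\, \exp\!\left(\frac{t}{8\kappa^2} \|h\|_H^2\right) < \infty
\end{equation*}
for every $h \in H$. Hence $T_{0,\, 2\kappa/t}(\mu_t, \mu_t^h) < \infty$, and by Lemma \ref{T-abs-cont} (which is purely measure-theoretic and so carries over with no change) we conclude $\mu_t^h \ll \mu_t$. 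The reverse absolute continuity $\mu_t \ll \mu_t^h$ follows by interchanging the roles of $0$ and $h$ in the argument, or equivalently by swapping $\mu_0$ and $\mu_1$; this uses only translation invariance of the Cameron--Martin norm $\|\cdot\|_H$, not symmetry of the divergence $T_{0, b}$ itself.

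The main obstacle, such as it is, is simply verifying that the proof of Theorem \ref{entropic-equivalence} \eqref{rlsi} $\implies$ \eqref{eti} and the point-mass estimate of Proposition \ref{T-point-mass-improved} genuinely do pass to the Wiener setting, where $\|D\cdot\|_H$ is \emph{not} a strong upper gradient with respect to $\|\cdot\|_W$. For the implication, no upper-gradient property is used --- one only needs the chain rule along straight cylinder paths, which holds for smooth cylinder functions. For the point-mass upper bound, I would replace the geodesic $\gamma$ from the original proof by the straight segment $\gamma(s) = s h$, which lies in $H$ and along which every cylinder function $F_s$ satisfies $\bigl|\tfrac{d}{ds} F_s(\gamma(s))\bigr| \le |\partial_s F_s|(\gamma(s)) + \|DF_s(\gamma(s))\|_H \|h\|_H$; the completing-the-square argument then yields \eqref{aws-T-point-mass} without requiring an upper-gradient structure on $W$. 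Once these two transfers are in place, the quasi-invariance statement drops out exactly as in Proposition \ref{lie-quasi}.
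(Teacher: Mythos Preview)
Your proposal is correct and follows essentially the same route as the paper: apply the \eqref{rlsi} $\Rightarrow$ \eqref{eti} implication of Theorem \ref{entropic-equivalence} (adapted to cylinder functions), specialize to $\mu_0=\delta_0$, $\mu_1=\delta_h$, bound $T_{\kappa,2\kappa/t}(\delta_0,\delta_h)$ via the Wiener-space point-mass estimate \eqref{aws-T-point-mass}, and invoke Lemma \ref{T-abs-cont}. Your additional remarks about why the upper-gradient property is not needed and how the straight segment $\gamma(s)=sh$ replaces the geodesic are exactly the adaptations the paper spells out in the paragraphs preceding the proposition.
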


\begin{proof}
  The logic is the same as in the proof of Proposition \ref{lie-quasi}.  Since the
  reverse logarithmic Sobolev inequality holds, Theorem
  \ref{entropic-equivalence} and \eqref{aws-T-point-mass} imply that
  for any $\kappa > 0$, we have
  \begin{equation}\label{gaussian-T-estimate}
    T_{0,2\kappa /t}(\mu_t, \mu_t^h) \le T_{1,2\kappa /t}(\delta_0, \delta_h) \le
    C_{2\kappa/t} \exp\left(\frac{t}{8\kappa^2} \|h\|_H^2\right) < \infty 
  \end{equation}
  Thus by Lemma \ref{T-abs-cont} we have $\mu_t^h \ll \mu_t$, and the
  reverse statement $\mu_t \ll \mu_t^h$ follows by symmetry.
\end{proof}

We also obtain a quantitative estimate on the $L^p$ norm of the
density $d\mu_t^h/d\mu_t$, which is perhaps most convenient to
consider in the form of \eqref{weak-ihi}:
\begin{equation}\label{gaussian-renyi-estimate}
  \left(\int_W \left(\frac{\d\mu_t}{d\mu_t^h}\right)^{1/(p-1)}
  \d\mu_t\right)^{p-1} \le \exp\left(\frac{p-1}{(\log p)^2} \frac{\|h\|_H^2}{2t}\right).
\end{equation}
As the left side is known to exactly equal $\exp\left(\frac{p}{p-1}
\frac{\|h\|_H^2}{2t}\right)$ (so that \eqref{ihi} is sharp),
\eqref{gaussian-renyi-estimate} becomes sharp as $p \to 1$.

One may also apply the reverse logarithmic Sobolev inequality for the
Ornstein--Uhlenbeck $Q_s$, which is the symmetric Markov semigroup on $L^2(\mu)$
generated by the Dirichlet form $\mathcal{E}(F,F) = \int_W
\|DF\|_H^2\,d\mu$.  It satisfies 
\begin{equation}
    Q_s F \|D \ln Q_s F\|^2_H \le \frac{2}{e^{2s}-1}(Q_s(F \ln F)
  - Q_s F \ln Q_s F).
\end{equation}
See for instance \cite[Section 3]{bakry-tata}, noting that the carr\'e du champ
of $Q_s$ is $\Gamma(F,F) = \|DF\|_H^2$, without a factor of
$\frac{1}{2}$.  Carrying out the above computations with $Q_s$ and
noting that $\delta_h Q_s = \mu_{1-e^{-2s}}^{e^{-s}h}$, one obtains
exactly the same results for $t < 1$.

Finally, we remark that the Cameron--Martin quasi-invariance theorem can also be obtained
  using the Hellinger--Kantorovich contraction property of Theorem \ref{equi-func}.  Indeed, the reverse
  Poincar\'e inequality  \eqref{aws-rev-poincare} for $P_t$ implies
  \begin{equation}\label{aws-hellinger-bound}
    \mathsf{He}_2(\mu_t, \mu_t^h)^2  \le \frac{1}{4t} \|h\|_{H}^2
  \end{equation}
  since the Kantorovich--Wasserstein distance between point masses in
  this setting corresponds to the Cameron--Martin distance; this can
  be checked directly from the dynamic dual definition as in
  Proposition \ref{W-Dirac-distance-prop}.  Unfortunately,
  \eqref{aws-hellinger-bound} has no content unless $\frac{1}{4t}
  \|h\|_H^2 < 2$, so to work around this, choose an integer $n$ so
  large that $n^{-2} \frac{1}{4t}  \|h\|^2_H < 2$.  Applying
  \eqref{aws-hellinger-bound} with $h/n$ in place of $h$, we conclude
  that $\mathsf{He}_2^2(\mu_t, \mu_t^{h/n}) < 2$ and in particular
  that $\mu_t, \mu_t^{h/n}$ are not mutually singular.  By the
  Feldman--H\'ajek dichotomy theorem for Gaussian measures \cite{feldman,feldman-correction,hajek-dichotomy,brody-dichotomy}, they must
  therefore be mutually absolutely continuous, which we denote by
  $\mu_t \sim \mu_t^{h/n}$.  Repeating this argument $n$ times, we
  have $\mu_t \sim \mu_t^{h/n} \sim \mu_t^{2h/n} \sim \dots \sim
  \mu_t^h$, and since $\sim$ is an equivalence relation, we have
  $\mu_t \sim \mu_t^h$ as desired.

  Although this argument uses only the reverse Poincar\'e inequality,
  which is \emph{a priori} weaker than the reverse logarithmic Sobolev
  inequality used in Proposition \ref{cm-rev-lsi}, the conclusion is
  also weaker as it does not yield any quantitative information about
  the distance between the measures $\mu_t, \mu_t^h$.

  We note that some proofs of the Feldman--H\'ajek dichotomy theorem,
  including Feldman's original proof \cite{feldman,
    feldman-correction}, make use of the Cameron--Martin
  quasi-invariance theorem, which would seem to make the above
  argument circular.  However, it is possible to prove the dichotomy
  theorem directly, without assuming quasi-invariance---see for
  example \cite{brody-dichotomy}---and this breaks the cycle.

\subsection{Infinite dimensional Heisenberg-like groups}

The ideas of the previous two subsections come together in the study
of infinite-dimensional groups where the semigroup in question is not
elliptic.  In \cite{bgm}, the authors considered infinite-dimensional
Heisenberg-like groups, introduced in \cite{DriverGordina}, with their
hypoelliptic heat kernels and corresponding heat semigroups.  These
groups carry a natural sub-Riemannian geometry analogous to the
Heisenberg group and other Carnot groups of step two.   They use
generalized curvature-dimension inequalities to show that these spaces
satisfy a reverse logarithmic Sobolev inequality.  From this, they
derive a Wang-type Harnack inequality, and use this to show
quasi-invariance of the heat kernel measure under the group
translation.  In this section, we show that as in the case of Gaussian
measures, transport inequalities provide an alternate route from
reverse log Sobolev to quasi-invariance in this setting.  We only
sketch the argument here, as the details are closely analogous to
those for the Gaussian case.

We follow the notation of \cite{bgm} and refer the reader there for
complete definitions, background, and further references.  Let
$(W,H,\mu)$ be an abstract Wiener space and $\mathbf{C}$ a
finite-dimensional inner product space.  Suppose that $\mathfrak{g} =
W \times \mathbf{C}$ is equipped with a continuous Lie bracket
$[\cdot, \cdot]$ satisfying $[W,W]=\mathbf{C}$ and $[\mathfrak{g},
  C]=0$.  The corresponding Banach Lie group $\mathbb{G}$ is given by $\mathbb{G} = W
\times \mathbf{C}$ equipped with the nonabelian group operation $g_1
\cdot g_2 = g_1 + g_2 + \frac{1}{2} [g_1, g_2]$ defined by the
Baker--Campbell--Hausdorff formula.  Then $\mathfrak{g}_{CM} = H
\times \mathbf{C}$ is a dense Lie subalgebra of $\mathfrak{g}$, called
the Cameron--Martin Lie subalgebra, and likewise $\mathbb{G}_{CM} = H \times
\mathbf{C} \subset \mathbb{G}$ is a dense subgroup of $\mathbb{G}$.

If $B_t$ is a standard Brownian motion on $(W,\mu)$, we may define a
left-invariant Brownian motion $g_t$ on $\mathbb{G}$ by the formula $g_t =
\left(B_t, \frac{1}{2}\int_0^t [B_s,\d B_s] \right)$.  Let $\nu_t
= \operatorname{Law}(g_{2t})$ be the heat kernel measure induced by
$g_t$.  By analogy with the finite-dimensional Heisenberg group, one
expects the measure $\nu_t$ to be ``smooth'' in some sense.  One
cannot express this smoothness in terms of a density with respect to
Lebesgue or Haar measure because the latter do not exist in infinite
dimensions, but another reasonable notion of smoothness would be for
$\nu_t$ to be quasi-invariant under left translation by elements of
the Cameron--Martin subgroup $\mathbb{G}_{CM}$.  The main result of \cite{bgm}
is that this is in fact the case.  (We also mention \cite{dem} where
the same statement was shown through different means, by producing a
density of $\nu_t$ with respect to the measure $\mu \times m$, where
$\mu$ is the Gaussian measure on $W$ and $m$ is Lebesgue measure on $\mathbf{C}$.)

It is shown in \cite{bgm} that the group $\mathbb{G}$ can be
approximated by finite-dimensional projection groups $\mathbb{G}_P$,
each of which is a nilpotent Lie group of step $2$.  This leads to a
notion of smooth cylinder functions $F : \mathbb{G} \to \mathbb{R}$
which can be differentiated in directions $X \in \mathfrak{g}_{CM}$,
and thus a horizontal gradient $\nabla_H F : \mathbb{G} \to H$ can be
defined for such functions.  If $\gamma : [0,1] \to \mathbb{G}_{CM}$
is an absolutely continuous horizontal path, then its derivative
$\gamma'$ can be identified as a curve in $H$, and we have the chain
rule $\frac{d}{ds} F(\gamma(s)) = \langle \nabla_H F(\gamma(s)),
\gamma'(s) \rangle_H$.  Moreover, $\mathbb{G}_{CM}$ is a length space
with respect to the horizontal distance $d_{CM}$, and so the estimates
on $W_{a,b}(\delta_0, \delta_g), T_{a,b}(\delta_0, \delta_g)$ from
Propositions \ref{W-Dirac-distance-prop} and
\ref{T-point-mass-improved} go through for $g \in \mathbb{G}_{CM}$,
with $d = d_{CM}$.

Now \cite[Proposition 4.8]{bgm} shows, by means of generalized
curvature-dimension inequalities as introduced in \cite{BaudoinGarofalo}, that each projection
group $\mathbb{G}_P$ satisfies a reverse logarithmic Sobolev inequality, with a
uniform constant of the form $C/t$ where $C$ depends only on the
structure of $\mathbb{G}$, and not on the projection.  This can be restated as the
following reverse logarithmic Sobolev inequality for cylinder
functions on $\mathbb{G}$: 
\begin{equation}\label{inf-heis-rev-lsi}
  P_t F \|\nabla \ln P_t F\| \le \frac{C}{t} (P_t(F \ln F)-P_t f \ln
  P_t f)
\end{equation}
and so as in Proposition \ref{cm-rev-lsi} above, we recover a version
of the main quasi-invariance result of \cite{bgm} and \cite{dem}:
\begin{proposition}
  For each $t>0$, the heat kernel measure $\nu_t$ on $\mathbb{G}$ is
  quasi-invariant under left translation by elements of $\mathbb{G}_{CM}$.
\end{proposition}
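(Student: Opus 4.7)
The plan is to follow exactly the template already laid down in Proposition \ref{cm-rev-lsi} for abstract Wiener space and Proposition \ref{lie-quasi} for finite-dimensional Lie groups, with the reverse logarithmic Sobolev inequality \eqref{inf-heis-rev-lsi} as the input and quasi-invariance as the output, via the entropic transportation-cost inequality \eqref{eti}. The only genuinely new issue is making sure the infinite-dimensional adaptation of the machinery from Sections \ref{HK-sec} and \ref{renyi-sec}, indicated in Section \ref{aws-sec}, applies in this sub-Riemannian group setting; this is where I would spend most of the writing.

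The first step is to apply (the cylinder-function version of) Theorem \ref{entropic-equivalence} to \eqref{inf-heis-rev-lsi}, which yields, for every $\kappa>0$,
\begin{equation*}
  T_{0,\kappa C/t}(\mu_0 P_t,\mu_1 P_t) \le T_{\kappa,\kappa C/t}(\mu_0,\mu_1),
  \qquad \mu_0,\mu_1\in\mathcal{P}(\mathbb{G}).
\end{equation*}
Here the $T_{a,b}$ are defined as in Section \ref{aws-sec} via cylinder test functions on $[0,1]\times\mathbb{G}$ using $\|\nabla_H\cdot\|_H$ in place of $|\nabla\cdot|$. I would verify that the proof of Theorem \ref{entropic-equivalence} (the direction we actually need, $\eqref{rlsi}\Rightarrow\eqref{eti}$, which is just Theorem \ref{entropic-func-ineq} applied to cylinder test functions) goes through verbatim: the only properties used are positivity preservation, the chain rule for $\nabla_H$ on cylinder functions, and the commutation $P_t\partial_s=\partial_sP_t$, all of which hold here.

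Next I specialize to $\mu_0=\delta_{\mathbf{e}}$ and $\mu_1=\delta_g$ for $g\in\mathbb{G}_{CM}$. Since $P_t$ is left-invariant, $\delta_{\mathbf{e}}P_t=\nu_t$ and $\delta_gP_t=\nu_t^g$, where $\nu_t^g(A)=\nu_t(g^{-1}A)$ is the left translate. Using Proposition \ref{T-point-mass-improved} in the version adapted to $\mathbb{G}_{CM}$ as noted in the text (the constant-speed geodesic for $d_{CM}$ lies in $\mathbb{G}_{CM}$, the chain rule $\tfrac{d}{ds}F(\gamma(s))=\langle\nabla_HF(\gamma(s)),\gamma'(s)\rangle_H$ holds for cylinder $F$, and $\|\gamma'(s)\|_H=d_{CM}(\mathbf{e},g)$), I obtain
\begin{equation*}
  T_{\kappa,\kappa C/t}(\delta_{\mathbf{e}},\delta_g) \le C_{\kappa C/t}\exp\left(\frac{t}{4\kappa^2 C}d_{CM}(\mathbf{e},g)^2\right)<\infty.
\end{equation*}
Combining, $T_{0,\kappa C/t}(\nu_t,\nu_t^g)<\infty$.

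Finally, Lemma \ref{T-abs-cont} (whose proof is purely measure-theoretic and uses only Corollary \ref{T-generic-lower-a0} applied to an approximating sequence of bounded Lipschitz, hence cylinder-approximable, functions) gives $\nu_t^g\ll\nu_t$. Replacing $g$ by $g^{-1}$ and using left-invariance of $P_t$ yields $\nu_t\ll\nu_t^g$ as well. The main obstacle is really the bookkeeping in the first paragraph: one must check carefully that each ingredient from Sections \ref{HK-sec}--\ref{renyi-sec}—in particular the functional-inequality-to-contraction implication, the point-mass upper bound, and Lemma \ref{T-abs-cont}—is valid when $\lip_b$ is replaced by cylinder functions, $|\nabla\cdot|$ by $\|\nabla_H\cdot\|_H$, and $d$ by $d_{CM}$, since $\|\nabla_H\cdot\|_H$ is not an upper gradient with respect to the Banach norm on $\mathbb{G}$. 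All of this is entirely parallel to Section \ref{aws-sec} and the authors may reasonably leave the details to the reader, citing the finite-dimensional approximation by the projection groups $\mathbb{G}_P$ in \cite{bgm} as justification for any passage to the limit that is needed.
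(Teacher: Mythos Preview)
Your proposal is correct and follows precisely the template the paper intends: the paper itself does not give a detailed proof here but simply says ``as in Proposition \ref{cm-rev-lsi} above,'' relying on the rLSI $\Rightarrow$ ETI implication, the point-mass bound for $T_{a,b}$ adapted to $d_{CM}$, and Lemma \ref{T-abs-cont}. Your write-up is in fact more explicit than the paper's about the bookkeeping needed to transfer the cylinder-function machinery to this setting.
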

Moreover, the bounds on $T_{0,b}(\nu_t, \nu_t^g)$ in \eqref{eti}
yield $L^q$ bounds on the Radon--Nikodym derivative $\d\nu_t^g/\d\nu$,
as in the proof of Theorem \ref{entropic-equivalence}, which are
asymptotically equivalent to the integrated Harnack inequalities of
\cite[Section 5.2]{bgm} as $p \to 1$ and $q \to \infty$.

\section*{Acknowledgments}
The authors are grateful for helpful
discussions with Maria Gordina, Martin Hairer, Ronan Herry, Kazumasa Kuwada,
Xue-Mei Li, and Giuseppe Savar\'e.  We also thank the anonymous
referee for their careful reading and useful suggestions.  This
article was completed during a sabbatical visit by author N.~Eldredge
to the Department of Mathematics at the University of Connecticut; he
would like to thank the Department and especially Maria Gordina for
their hospitality, especially in view of the difficult circumstances
created by the COVID-19 pandemic.

\bibliographystyle{plainnat}
 \bibliography{Duality_Refs}

\end{document}